\long\def\eatit#1{}
\newtheorem{thm}{Theorem}[subsection]
\newtheorem{thm*}{Theorem}
\newtheorem{prop}[thm]{Proposition}
\newtheorem{lem}[thm]{Lemma}
\newtheorem{cor}[thm]{Corollary}
\newtheorem{conj*}{Conjecture}
\newtheorem{prob*}{Problem}
\theoremstyle{definition}
\newtheorem{defn}[thm]{Definition}
\newtheorem{defn*}{Definition}
\newtheorem{example}[thm]{Example}
\newtheorem{remark}[thm]{Remark}
\newcommand{\pr}[1]{{{\bf P}^{#1}}}
\newcommand{\skipit}[1]{{}}
\newcommand{\OO}{{\mathcal O}}
\newcommand{\KK}{K}
\newcommand{\myarrow}[2]{\hbox to #1pt{\hfil$\to$\hfil}{\hskip-#1pt{\raise
10pt\hbox to#1pt{\hfil$\scriptscriptstyle #2$\hfil}}}}
\newtheorem{rem}[thm]{Remark}
\begin{document}
\title[On plane rational curves and the splitting of the tangent bundle]{On plane
rational curves and the splitting of the tangent bundle}

\author[A. Gimigliano]{Alessandro Gimigliano}
\address{Dipartimento di Matematica e CIRAM\\
Universit\`a di Bologna\\
40126 Bologna, Italy}
\email{gimiglia@dm.unibo.it}

\author[B. Harbourne]{Brian Harbourne}
\address{Department of Mathematics\\
University of Nebraska\\
Lincoln, NE 68588-0130 USA}
\email{bharbour@math.unl.edu}

\author[M. Id\`a]{Monica Id\`a}
\address{Dipartimento di Matematica\\
Universit\`a di Bologna\\
40126 Bologna, Italy}
\email{ida@dm.unibo.it}

\date{May 13, 2013}

\thanks{Acknowledgments: We thank GNSAGA, and
the University of Bologna, which supported
visits to Bologna by the second author, who also
thanks the NSA for supporting his research.}

\keywords{Cotangent bundle, splitting types, rational curves, exceptional curves, parameterizations, moving lines,
immersions, Cremona transformations, graded Betti numbers, fat points.}

\subjclass[2000]{Primary
14C20, 
 13P10; 
 Secondary 14J26, 
 14J60.} 

\begin{abstract} Given an immersion $\varphi:\pr1\to\pr2$, we give new approaches
to determining the splitting of the pullback of the cotangent
bundle. We also give new bounds on the splitting type for
immersions which factor as $\varphi:\pr1\cong D\subset X\to\pr2$,
where $X\to\pr2$ is obtained by blowing up $r$ distinct points
$p_i\in\pr2$. As applications in the case that the points $p_i$
are generic, we give a complete determination of the splitting
types for such immersions when $r\leq 7$. The case that $D^2=-1$
is of particular interest. For $r\leq8$ generic points, it is
known that there are only finitely many inequivalent $\varphi$
with $D^2=-1$, and all of them have balanced splitting. However,
for $r=9$ generic points we show that there are infinitely many
inequivalent $\varphi$ with $D^2=-1$ having unbalanced splitting
(only two such examples were known previously). We show that these
new examples are related to a semi-adjoint formula which we
conjecture accounts for all occurrences of unbalanced splitting
when $D^2=-1$ in the case of $r=9$ generic points $p_i$.  In the
last section we apply such results to the study of the resolution
of fat point schemes.
\end{abstract}

\maketitle

\section{Introduction}\label{intro}
We work over an algebraically closed ground field $\KK$. We are
interested in algebraic immersions $\varphi:\pr1\to\pr n$, thus
$\varphi$ is a projective morphism which is generically injective
and generically smooth over its image. The fact that $\varphi$
need not be everywhere injective or smooth means that the image
$\varphi(\pr1)$ may have singularities. It is well-known that any
vector bundle on $\pr1$ splits as a direct sum of line bundles
\cite{refB, refG}. This applies in particular to the pullback
$\varphi^*T_{\pr n}$ of the tangent bundle. It turns out to be
more convenient, yet equivalent, for us to study the splitting of
the pullback $\varphi^*\Omega_{\pr n}(1)$ of the first twist of
the cotangent bundle. Thus we will focus on $\varphi^*\Omega_{\pr
n}(1)$; it is isomorphic to
$\OO_{\pr1}(-a_1)\oplus\cdots\oplus\OO_{\pr1}(-a_n)$ for some
integers $a_i$. By reordering if necessary we may assume $a_1\leq
a_2\leq\cdots\leq a_n$; we call $(a_1,\ldots,a_n)$ the
\emph{splitting type} of $\varphi^*\Omega_{\pr n}(1)$. Pulling the
Euler sequence
$$0\to \Omega_{\pr n}(1)\to \OO_{\pr n}^{\oplus n+1}\to \OO_{\pr n}(1)\to 0$$
back via $\varphi$, it follows that
$a_1+\cdots+a_n=d_C$, where $d_C$ is the degree of $C=\varphi(\pr 1)$.

The question arises as to what splitting types can occur.
Most of the work on this problem is moduli-theoretic: given putative splitting types,
one asks if there are any $\varphi$ with those invariants
and if so, what one can say about the space of all such $\varphi$, or about the generic $\varphi$, etc.
See for example \cite{refAs1, refAs2, refRm}.
When $\varphi$ is an embedding, one can
ask for the splitting type of the normal bundle of $\varphi(\pr1)$. This question has also
attracted attention; see for example \cite{{refHu}, refGS, refEV1, refEV2, refCl, refRn} among many others.
However, if $n=2$ this latter question is not of much interest, both because
the normal bundle is itself a line bundle and because $C$ must at most be either a line or a conic.
In contrast, there is still much that is not yet understood regarding
the splitting types of $\varphi^*\Omega_{\pr 2}(1)$.

When $n=2$, the splitting types have the form $(a_1,a_2)$ for integers
$0\leq a_1\leq a_2$ such that $a_1+a_2=d_C$.
We will denote $(a_1,a_2)$ by $(a_C,b_C)$ and refer to it as the splitting type of $C$,
and we will refer to $\gamma_C=b_C-a_C$ as the \emph{splitting gap}. When the gap is
at most 1 (i.e., when $\gamma_C$ is as small as parity considerations allow),
we will say that $C$ has \emph{balanced} splitting or is \emph{balanced}, and
we will say that $C$ is \emph{unbalanced} if the gap is more than 1.

The multiplicities of the singularities of $C$
heavily influence $\gamma_C$. For example, if $C$ has a point of multiplicity $m$, then
results of Ascenzi \cite{refAs1} show that
\begin{equation}\label{Ascenzibnds}
\min(m,d_C-m)\leq a_C\leq\min\Big(d_C-m,\Big\lfloor \frac{d_C}{2}\Big\rfloor\Big);
\end{equation}
see Lemma \ref{splitlem} and Proposition \ref{Ascenzi}. These bounds are
tightest when we use the largest possible value for $m$; i.e., when $m$ is the multiplicity $m_C$ of a
point of $C$ of maximum multiplicity. If $2m_C+1\geq d_C$, it follows from these bounds that
$a_C=\min(m_C,d_C-m_C)$ and hence $b_C=\max(m_C,d_C-m_C)$ and $\gamma_C=|2m_C-d_C|$.
This prompts us to make the following definition.

\begin{defn*}
A rational projective plane curve $C$ is \emph{Ascenzi} if $2m_C+1\geq d_C$.
\end{defn*}

Ascenzi curves exist. For example, it is easy to see that for each $d\geq 3$ there is a rational
projective plane curve $C$ of degree
$d_C=d$ with exactly one singular point, of multiplicity $m_C=d_C-1$.
It follows that each such $C$ is Ascenzi, and its splitting type is $(1,d_C-1)$.

The main problem which we study here can be stated as follows:

\begin{prob*}\label{compprob}
Given a subspace $V=\langle \varphi_0,\varphi_1,\varphi_2 \rangle$ of dimension
3 in $K[\pr1]_d$ which gives a linear series $g^2_d$ on $\pr1$  defining a morphism which is an isomorphism
on a nonempty open subset,  find
the splitting type $(a_C,b_C)$ for the rational curve $C\subset \pr2$
given by the $g^2_d$, and, when $C$ is not Ascenzi, determine conditions
on the singularities of $C$ which force the splitting to be unbalanced.
\end{prob*}

This problem is closely related to that of determining the syzygies of the
homogeneous ideal
$(\varphi_0, \varphi_1, \varphi_2)$, since, as is well-known (see Lemma \ref{b1}),
$a_C$ is the least degree of such a syzygy.
These syzygies are of independent interest; see for example
\cite{refISV}, which studies the loci of $V$'s inside
the Grassmaniann $G(3,K[\pr1]_d)$ with respect to their syzygies, and
determines the dimensions of the loci.

We give two additional computational solutions to the first part of Problem \ref{compprob}
by showing that $b_C$ can be computed in terms of the
saturation degree of the ideal $(\varphi_0, \varphi_1, \varphi_2)\subset\KK[\pr1]$ (see Theorem \ref{b2}), and
by showing how to determine $\gamma_C$ using the
computationally efficient method of moving lines (see Theorem \ref{splittingM}), which was originally
developed to compute implicit equations of curves when given a
parameterization \cite{refSGD, refSSQK}.

Note that for a general immersion $\gamma_C$, the singularities of
$C$ are nodes (i.e., $m_C\leq 2$) whose disposition
in $\pr2$ is almost never generic. Thus if $C$ is a general rational curve
of degree $d_C>5$, then $C$ cannot be Ascenzi, and thus the splitting gap is not
completely determined by \eqref{Ascenzibnds}. Nonetheless,
Ascenzi proved that the general rational curve $C$ is balanced \cite{refAs1}.

But what can one say if it is not $C$ which is general, but rather it is the points at which $C$ is singular which are general? Thus we propose to
study $\gamma_C$ for rational curves $C$ when the points at which $C$ is singular
are generic points of $\pr2$; i.e., given generic points $p_1,\ldots,p_r\in\pr2$,
we require that $C$ be smooth away from the points $p_i$, and that $C'$ be smooth, where $C'$ is
the proper transform of $C$ on the surface $X$ obtained as the blow up
$\pi:X\to \pr2$ of $\pr2$ at the points $p_i$
(so not only is $C$ smooth away from the points $p_i$, but $C$ does not have any
additional infinitely near singularities). The immersion $\varphi$ in this situation
factors as $\pr1=C'\subset X\to\pr2$, so that $\varphi(\pr1)=\pi(C')=C$. In general, given a smooth rational curve
$D$ on $X$, it is convenient to use $a_D$, $b_D$ and $\gamma_D$ with
the obvious meanings; i.e., $a_D=a_{\pi(D)}$ etc. Similarly, we will say that $D$ is
Ascenzi if $\pi(D)$ is. To simplify statements of our results, we will also say $D$
is Ascenzi if $\pi(D)$ is a point.
In these terms the problem we propose to study, which is still open, is:

\begin{prob*}\label{prob}
Given a blow up $\pi:X\to\pr2$ at $r$ generic points $p_i$,
determine $\gamma_D$ for smooth rational curves $D\subset X$.
\end{prob*}

Given a curve $C\subset\pr2$ and distinct points $p_i$, we will denote ${\rm mult}_{p_i}(C)$
by $m_i(C)$. Note if $m_i(C)= 0$, then $p_i\not\in C$, and if $m_i(C)= 1$, then $p_i\in C$ but
$C$ is smooth at $p_i$. Given the blow up $\pi:X\to\pr2$ at distinct points
$p_1,\ldots, p_r\in\pr2$ and a divisor $D$ on $X$, it is well known that the divisor class $[D]$
(i.e., the divisor modulo linear equivalence) can be written uniquely as
$[dL-m_1E_1-\cdots-m_rE_r]$, where $L$ is the pullback via $\pi$ to $X$ of a line,
and $E_i=\pi^{-1}(p_i)$. If $D\subset X$ is a smooth rational curve with $d>0$, then
$C=\pi(D)$ is also a rational curve, and we have $[D]=[d_CL-m_1(C)E_1-\cdots-m_r(C)E_r]$.
We will refer to the integer vector $(d_C,m_1(C),\ldots,m_r(C))$
as the \emph{numerical type} of $C$ (or, by extension, of $D$) with respect to the points $p_i$.

Thus for example, $(d,d-1)$ is an unbalanced Ascenzi type
(i.e., the numerical type of an unbalanced Ascenzi curve) for
every $d\geq 4$. Computer calculations suggest many types also arise for unbalanced
non-Ascenzi curves with generically situated singular points, but up to now only two
have been rigorously justified (see Example \ref{AEpairs} for these two).
In contrast, the following theorem is proved in \S\ref{7pts}:

\begin{thm*}\label{7ptthm}
Let $X$ be the blow up of $r$ generic points of $\pr2$.
Among numerical types of smooth rational curves $D\subset X$, the following holds:
\begin{itemize}
\item[(a)] for $r\le6$, $(10,4,4,4,4,4,4)$ is the unique non-Ascenzi type
and curves of this type have balanced splitting;
\item[(b)] for $r=7$ there are infinitely many non-Ascenzi types, and for all but finitely many of these
types the curves have unbalanced splitting.
\end{itemize}
\end{thm*}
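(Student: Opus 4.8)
The argument, in both parts, has two stages: enumerating the non-Ascenzi numerical types of smooth rational curves $D\subset X$, and then computing $\gamma_D$ for each. I would use throughout that, writing $[D]=[dL-m_1E_1-\cdots-m_rE_r]$ with $d>0$, the arithmetic genus of $D$ is $\binom{d-1}{2}-\sum_i\binom{m_i}{2}$, so genus $0$ means $\sum_i\binom{m_i}{2}=\binom{d-1}{2}$; and that for $r\le 7$ generic points the classes of smooth rational curves are precisely the $(-1)$-classes together with the nef classes of arithmetic genus $0$. (A smooth rational $D$ with $D^2<0$ is a $(-1)$-curve, since $X$ is del Pezzo and $-K_X\cdot D\ge 1$; one with $D^2\ge 0$ is nef, being irreducible of nonnegative self-intersection; conversely a nef class of genus $0$ is base-point free, and its general member is irreducible—genus $0$ rules out proper multiples of a conic-bundle fibre—hence a smooth rational curve for generic $p_i$.) Since on a del Pezzo surface of degree $\ge 2$ every $(-1)$-curve has degree $\le 3$ as a plane curve and $2m_C+1\ge d_C$ in those cases, all $(-1)$-curves are Ascenzi; so in both parts the non-Ascenzi types come only from nef classes of genus $0$.

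\noindent\emph{Part (a).} For $r\le 6$ the nef condition on $D=dL-\sum m_iE_i$ is $D\cdot N\ge 0$ for the finitely many $(-1)$-curves $N$, i.e. $m_i\ge 0$, $m_i+m_j\le d$ $(i\ne j)$, and $\sum_{k\ne i}m_k\le 2d$; the non-Ascenzi condition is $m_i\le(d-2)/2$ for all $i$. First one shows $d$ is bounded: from $m_i\le(d-2)/2$ and genus $0$, $\sum m_i^2=(d-1)(d-2)+\sum m_i\le(d-1)(d-2)+3(d-2)=d^2-4$, so $D^2\ge 4$; and, among the sorted multiplicities $m_1\ge\cdots\ge m_6$, the nef bound on the five largest forces at most four of them to be as large as $(d-2)/2$ once $d>10$ (since $5\cdot(d-2)/2>2d$), while $m_6\le m_5\le 2d/5$, and feeding this into $\sum m_i^2=(d-1)(d-2)+\sum m_i$ caps $\sum m_i^2$ below what genus $0$ needs once $d$ is large. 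A finite check over the remaining range leaves only $(d,m_1,\dots,m_r)=(10,4,4,4,4,4,4)$ (which needs all six points and has the extremal value $D^2=4$). Its splitting: $a_C+b_C=10$, and \eqref{Ascenzibnds} with $m=m_C=4$ gives $4\le a_C\le 5$, so $\gamma_C\in\{0,2\}$. To exclude $a_C=4$, note by Lemma \ref{b1} that $a_C=4$ would mean a nonzero degree-$4$ syzygy of $(\varphi_0,\varphi_1,\varphi_2)$, $\varphi_i\in K[\pr1]_{10}$, i.e. a nonzero element of $\ker\big(K[\pr1]_4^{\oplus 3}\to K[\pr1]_{14}\big)$; but both spaces have dimension $15$, so it suffices that for some parametrization of the type this map be an isomorphism, which one checks directly (or via the computation behind Theorem \ref{b2}), and, that being open, the generic curve of type $(10,4^6)$ has $a_C=b_C=5$ and is balanced.

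\noindent\emph{Part (b).} For $r=7$, $X$ is del Pezzo of degree $2$; the nef inequalities now also include $m_{(1)}+\sum_k m_k\le 3d$, and the obstruction above disappears: with seven multiplicities one can spread the $m_i$ just below the non-Ascenzi threshold and still be nef. One exhibits an explicit infinite family of non-Ascenzi genus-$0$ nef types — for instance $(8,3,3,3,3,3,3,3)$ works ($7\binom32=\binom72$, and $6\le 8$, $15\le 16$, $24\le 24$), likewise $(15,6,6,6,6,6,6,2)$, and more generally a suitable choice of multiplicities for each $d\ge 8$ with $d\notin\{9,11\}$ (verify the genus identity and the finitely many nef inequalities, and $\max_i m_i\le(d-2)/2$). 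Each such class is a smooth rational curve type for generic $p_i$, so there are infinitely many non-Ascenzi types. To see that all but finitely many are unbalanced, fix such $D=dL-\sum m_iE_i$: the lower bound in \eqref{Ascenzibnds} gives $a_C\ge m_C$, while our upper bounds for $\gamma_D$ for immersions factoring as $\pr1\cong D\subset X\to\pr2$ bound $a_C$ above in terms of the vanishing of certain $H^1$'s of twists of line bundles on $X$. For $r\le 7$ generic points these groups are computable, and for the members of the family they force $h^1$ of the relevant twist to be positive, which pushes the upper bound for $a_C$ below $\lfloor d/2\rfloor$—in fact to at most $(d-2)/2$—once $d$ is large enough. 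Then $\gamma_C=d-2a_C\ge 2$, so these curves are unbalanced.

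The main obstacle is the enumeration in (a): although the relevant region is bounded, that is not obvious, so extracting an effective small range for $d$ from the rather slack inequalities—and then dispatching it—is the delicate point. The parallel difficulty in (b) is the cohomological computation pinning the upper bound for $a_C$ strictly below $d/2$ along the infinite family; by contrast the balanced case $(10,4^6)$ is easy, being a bare dimension count on degree-$4$ syzygies.
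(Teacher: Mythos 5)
Your overall strategy --- reduce to nef genus-$0$ classes and then enumerate them by hand from the nef inequalities and the genus identity --- is genuinely different from the paper's, which first proves a complete classification (Theorem \ref{list7}): every smooth rational curve class on the $7$-point blow up lies, up to the finite Weyl group $W(X)$, in the orbit of $E_7$, $H_1$, $2H_0$, $H_0+dH_1$ or $H_2+dH_1$, and the orbits are listed explicitly together with their splitting gaps, so that both parts of the theorem are read off from the list. That classification is not optional for part (b): the assertion is that \emph{all but finitely many} non-Ascenzi types are unbalanced, so exhibiting one infinite unbalanced family proves nothing about the finiteness of the balanced exceptions --- you must control every non-Ascenzi type, and your proposal never does this. (Indeed one of your two explicit examples, $(15,6,6,6,6,6,6,2)$, is one of the finitely many \emph{balanced} exceptions; by the paper's list it has gap $1$.) Your mechanism for unbalancedness is also garbled: it is not that ``$h^1$ of the relevant twist is positive,'' but rather that Proposition \ref{unbalsplitting} applied with $A=-K_X=3L-E_1-\cdots-E_7$ (which satisfies $h^1(A)=0$ and ${\rm le}(A)\ge1$ by Lemma \ref{lelemma}) gives $a_C\le(-K_X)\cdot C=3d-\sum m_i$, and this matches Ascenzi's lower bound $a_C\ge m_C$ for most types; the one residual family $(5,2,2,2,2,2,2,0)+d(5,2,2,2,2,2,2,1)$ requires a separate inductive cohomology computation in the paper.

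In part (a) the step ``one shows $d$ is bounded'' fails as written. The caps $m_1,\dots,m_4\le(d-2)/2$ and $m_5,m_6\le 2d/5$ bound $\sum m_i^2$ by $(d-2)^2+8d^2/25$, which \emph{exceeds} the required value $(d-1)(d-2)+\sum m_i$ for every $d$, since the surplus $8d^2/25$ swamps the deficit $d-2$; concretely, at $d=20$ your constraints allow $\sum m_i(m_i-1)$ as large as $400$ while genus $0$ requires only $342$, so no contradiction results and $d=20$ is not excluded. (Your other inequalities in fact all point the wrong way: they yield the lower bound $d\ge10$.) To rescue this route one must use the five-point nef inequality \emph{jointly} with the individual caps --- if $m_1=\cdots=m_4=(d-2)/2$ then $m_1+\cdots+m_5\le2d$ forces $m_5\le4$, a constant --- and only then does a convexity/vertex argument give $d\le10$ and hence $(10,4,4,4,4,4,4)$ as the unique solution. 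Finally, your dimension count for $(10,4,4,4,4,4,4)$, even granting the unperformed $15\times15$ rank verification, yields balancedness only for the \emph{generic} curve of that type; the paper proves it for every such curve by using the surjectivity of $H^0(C)\otimes H^0(L)\to H^0(C+L)$ for generic points and the snake lemma.
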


Our results in \S\ref{7pts} completely solve Problem \ref{prob} for $r\leq7$ by
classifying the numerical types for all smooth rational curves $D\subset X$ for $r\leq 7$ generic points,
and by determining the splitting gaps of curves of each type.
\medskip  

For larger values of $r$, a natural special case of Problem \ref{prob}
is to consider exceptional curves; i.e., smooth rational curves $D\subset X$ with $D^2=-1$.
This case arises, for example, when
studying graded Betti numbers for minimal free resolutions of ideals of fat points supported at generic
points $p_i$ (see \cite{refF1,refF2, refGHI1} and also \S\ref{appls}), but this case
is of interest in its own right,
since the exceptional curves represent an extremal case of Problem \ref{prob}.
Indeed, if ${\rm char}(\KK)=0$, it is known \cite{refD1, refD2} for every $r$ that every smooth rational curve
$D\subset X$ satisfies $D^2\geq -1$.
This is only conjectural if $r>9$ when ${\rm char}(\KK)>0$, but it is true in all characteristics if $r\leq9$.
For if $r<9$, then $-K_X$ is ample, hence $D^2 \geq -1$ follows from the adjunction
formula, $D^2=2g_D-2-K_X\cdot D$, since $g_D=0$ for a smooth rational curve $D$.
If $r=9$, then $-K_X$ is merely nef, so this argument gives only $D^2\geq -2$,
but one can show that if $D^2=-2$, then $D$ reduces by a Cremona transformation
centered in the points $p_i$ to $L-E_1-E_2-E_3$, which contradicts the fact that the points
$p_i$ are generic. For an exposition of the conjectural status when $r>9$, see \cite{refH3}.

When $r<9$ it is known that there are only finitely many numerical types of
exceptional curves, and they all are Ascenzi (see \S\ref{9pts}).
Thus the first interesting case of Problem \ref{prob} for exceptional curves
is $r=9$, for which we have the following result (proved, as well as Theorem \ref{thm2} below, in \S\ref{9pts}):

\begin{thm*}\label{classificationthm}
If $X$ is the blow up of $r=9$ generic points of $\pr2$, then:
\begin{itemize}
\item[(a)] $X$ has only finitely many Ascenzi exceptional curves;
\item[(b)] up to the permutations of the multiplicities, the only numerical
type of an unbalanced Ascenzi exceptional curve is
$(4,3,1,1,1,1,1,1,1,1)$; but
\item[(c)] $X$ has infinitely many unbalanced non-Ascenzi exceptional curves.
\end{itemize}
\end{thm*}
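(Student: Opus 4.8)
The plan is to dispatch (a) and (b) by elementary estimates on the numerical type and to treat (c) by an explicit construction of syzygies. Let $D\subset X$ be an exceptional curve with numerical type $(d,m_1,\dots,m_9)$. Since $g_D=0$, adjunction gives $K_X\cdot D=-1$, and together with $D^2=-1$ this is equivalent to $\sum_i m_i=3d-1$ and $\sum_i m_i^2=d^2+1$ (if $d=0$ these force $D=E_i$ for some $i$, contributing only finitely many curves, so from now on $d>0$ and each $m_i\ge 0$). Set $m=m_C=\max_i m_i$; because $C=\pi(D)$ is smooth away from the $p_i$ and has no infinitely near singularities, $m$ is also the largest multiplicity of $C$ as a plane curve. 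By the Ascenzi bounds \eqref{Ascenzibnds} applied at the point of multiplicity $m$, $C$ is Ascenzi exactly when $2m+1\ge d$, in which case $a_C=\min(m,d-m)$ and $\gamma_C=|2m-d|$; combined with $2m+1\ge d$, unbalancedness ($\gamma_C\ge 2$) then amounts to $2m\ge d+2$.

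For (a), single out the largest multiplicity $m=m_{i_0}$ and apply the Cauchy--Schwarz inequality to the remaining eight: $(3d-1-m)^2=\bigl(\sum_{i\ne i_0}m_i\bigr)^2\le 8\sum_{i\ne i_0}m_i^2=8(d^2+1-m^2)$, which rearranges to $9m^2+(2-6d)m+(d^2-6d-7)\le 0$. This quadratic in $m$ has vertex at $m=(3d-1)/9$ and so is increasing on $m\ge(d-1)/2$; hence a value $m$ with $2m+1\ge d$ can satisfy it only if it is already $\le 0$ at $m=(d-1)/2$, i.e. $d^2-26d-23\le 0$, forcing $d\le 26$. For each such $d$ there are only finitely many numerical types (the relation $\sum_i m_i^2=d^2+1$ bounds the $m_i$), and each type carries at most one curve (distinct irreducible curves in a class have intersection number $-1$, which is impossible); this proves (a). For (b), run the same estimate under the stronger hypothesis $2m\ge d+2$: evaluated at $m=(d+2)/2$ the quadratic equals $(d-4)^2/4$, which must be $\le 0$, so $d=4$; then $\sum_i m_i=11$ and $\sum_i m_i^2=17$ with $m\ge 3$ leave only $m=3$, and equality in Cauchy--Schwarz forces the eight remaining multiplicities all equal, hence all $=1$. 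Thus the only numerical type is $(4,3,1,1,1,1,1,1,1,1)$, which is realized: for generic $p_i$ the unique plane quartic with an ordinary triple point at $p_1$ through $p_2,\dots,p_9$ is irreducible and rational, is an exceptional curve, is Ascenzi, and has $a_C=\min(3,1)=1$, hence splitting type $(1,3)$ and $\gamma_C=2$; this proves (b).

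For (c) one must exhibit infinitely many exceptional curves on $X$ (all with respect to the fixed nine generic points) that are simultaneously non-Ascenzi ($2m+1<d$) and unbalanced. The engine is a semi-adjoint bound, the formula referred to in the introduction: I would show that a degree-$e$ moving line following $C=\pi(D)$ --- equivalently a degree-$e$ syzygy of the parameterization $(\varphi_0,\varphi_1,\varphi_2)$ --- is produced whenever a plane curve of degree $e$ passing through each $p_i$ with multiplicity about $m_i/2$ exists and restricts nontrivially to $D$; since a syzygy of degree $e\le\lfloor d/2\rfloor-1$ already forces $\gamma_C\ge 2$, it then suffices to arrange $e\le\lfloor d/2\rfloor-1$. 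I would then produce an explicit infinite sequence of exceptional numerical types $D^{(k)}$ --- for instance by iterating a Mordell--Weil translation on the rational elliptic surface $X$, taking $D^{(k)}$ to be the numerical type (with respect to the fixed marking) of the $k$-th translate of a well-chosen section, so that $d^{(k)}$ and the $m_i^{(k)}$ grow quadratically in $k$ --- and verify along the sequence that: (i) the multiplicities stay spread out, $m^{(k)}_{\max}<(d^{(k)}-1)/2$, so the curves are non-Ascenzi; (ii) the associated semi-adjoint degree satisfies $e^{(k)}\le\lfloor d^{(k)}/2\rfloor-1$, so they are unbalanced; and (iii) the $D^{(k)}$ have pairwise distinct numerical types.

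The step I expect to be the main obstacle is the uniform verification that the semi-adjoint system really does yield a syzygy of the asserted small degree. This has two ingredients. First, one needs the relevant linear systems on the blow-up of the nine generic points --- those of degree $e^{(k)}$ with the prescribed multiplicities about $m_i^{(k)}/2$ --- to be non-special of the expected (positive) dimension; this is a Harbourne--Hirschowitz-type statement, delicate precisely in this near-anticanonical regime (the expected dimension is essentially $0$), that must be checked for exactly the classes that arise. Second, one needs the geometric input that a general member of such a system, read off along $D^{(k)}$, genuinely produces a nonzero element of the degree-$e^{(k)}$ part of the syzygy module of $(\varphi_0,\varphi_1,\varphi_2)$ --- i.e. that the relevant restriction/evaluation map is nonzero. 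Controlling the arithmetic so that $e^{(k)}<\lfloor d^{(k)}/2\rfloor$ for every $k$, rather than only sporadically, is the remaining delicate point, and is exactly what ties these examples to the conjectural semi-adjoint characterization of unbalanced exceptional curves for $r=9$.
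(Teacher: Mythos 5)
Your treatment of parts (a) and (b) is correct and takes a genuinely different, more elementary route than the paper. Where the paper writes an exceptional class as $v+\tfrac{v^2}{2}[K_X]+[E_9]$ with $v$ in the root lattice $K_X^\perp\cap E_9^\perp$ and then bounds $d_E$ by an averaging argument plus Lagrange multipliers (Proposition \ref{finAscprop} and Corollary \ref{BoundOnDegree}), you work directly with the two constraints $\sum_i m_i=3d-1$ and $\sum_i m_i^2=d^2+1$ and apply Cauchy--Schwarz to the eight non-maximal multiplicities. Your quadratic $9m^2+(2-6d)m+(d^2-6d-7)\le0$, evaluated at $m=(d-1)/2$, gives $d^2-26d-23\le0$ and hence $d\le 26$, recovering exactly the paper's bound; evaluated at $m=(d+2)/2$ it gives $(d-4)^2\le0$, pinning down $d=4$, and the equality case of Cauchy--Schwarz then forces the type $(4,3,1,1,1,1,1,1,1,1)$. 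The monotonicity you invoke requires $(d-1)/2$ to lie to the right of the vertex $(3d-1)/9$, i.e.\ $d\ge3$, but $d\le2$ contributes only finitely many types, so this is harmless. This is a clean, self-contained replacement for the paper's argument for (a) and (b).

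Part (c), however, is not proved: you have written a plan whose two essential steps are precisely the ones you defer as ``the main obstacle.'' Concretely, what is missing is, first, the mechanism by which a semi-adjoint divisor $A$ forces a low-degree syzygy on the parameterization of $D$. This is not ``a general member of $|A|$ restricting nontrivially to $D$''; it is the existence of a \emph{linear syzygy} $A_0x_0+A_1x_1+A_2x_2=0$ among sections of $|A|$, i.e.\ $\ker\mu_A\ne0$ (which follows from the dimension count $3h^0(A)>h^0(A+L)$ once one knows $h^1(A)=0$ and $A^2+1\ge A\cdot L$, as in Lemma \ref{lelemma}), combined with the vanishing $h^0(A-D+L)=0$ needed to push that syzygy into the kernel of the multiplication map on $\OO_D(A)$ and conclude $a_D\le A\cdot D$; that implication is the snake-lemma diagram chase of Proposition \ref{unbalsplitting}, and none of it appears in your argument. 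Second, no actual infinite family is exhibited, and the arithmetic conditions (i)--(iii) you list are not verified for any family. The paper's route avoids your delicate uniform estimates entirely: starting from an arbitrary exceptional class $E=dL-m_1E_1-\cdots-m_9E_9$ of large degree (infinitely many exist by Proposition \ref{enumexc}), it sets $A=E+E_1-sK_X$ with $s=d-2m_1+1$ and $C_A=2A-K_X-L$, checks by direct computation that $C_A$ is an exceptional class satisfying $2[A]=[C_A+K_X+L]$, proves $A$ is effective and nef (hence $h^1(A)=0$), and deduces $a_{C_A}\le A\cdot C_A=(d_{C_A}-2)/2$, so $\gamma_{C_A}\ge2$ (Lemma \ref{usefullemma2} and Corollary \ref{usefullemma}); non-Ascenzi-ness is then free from your own part (a) once $d_{C_A}>26$. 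Unless you supply the analogue of that chain --- effectivity and nefness of $A$, the vanishing $h^1(A)=0$, the inequality $A\cdot D\le\lfloor d/2\rfloor-1$, and a concrete infinite list of classes --- part (c) remains unestablished.
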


Heretofore only one non-Ascenzi exceptional curve was proved
to have unbalanced splitting (this being the one of type $(8,3,3,3,3,3,3,3,1,1)$
\cite[Lemma 3.12(b)(ii)]{refFHH}; see Example \ref{AEpairs}) when $r=9$.
Computational experiments suggest that
$X$ also has infinitely many balanced exceptional curves when $r=9$. Proving that is still
an open problem, but it would follow (see Remark \ref{infnonAscbal})
if Conjecture \ref{9ptconj} which we state below is true.

Our proof of Theorem \ref{classificationthm}(c)
applies the following sufficient numerical criterion
for an exceptional curve with $r=9$ to have unbalanced splitting:

\begin{thm*}\label{thm2}
Let $E$ be an exceptional divisor on the blow up $X$ of $\pr2$ at $r=9$ generic points $p_i$.
If $d_E=E\cdot L$ is even and each $m_i=E\cdot E_i$ is odd, then $a_E\leq (d_E-2)/2$ and $\gamma_E\geq 2$.
\end{thm*}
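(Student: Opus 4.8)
The plan is to exhibit a syzygy of $(\varphi_0,\varphi_1,\varphi_2)$ of degree $(d_E-2)/2$; since $a_E$ is the least degree of a syzygy (Lemma \ref{b1}), this forces $a_E\le(d_E-2)/2$, and then $\gamma_E=b_E-a_E=d_E-2a_E\ge 2$ because $d_E$ is even (so the two inequalities in the statement are equivalent). Write $d=d_E$ and identify $\pr1$ with $E$ so that $\varphi=\pi|_E$. Twisting the pulled‑back Euler sequence $0\to\varphi^*\Omega_{\pr2}(1)\to\OO_E^{\oplus 3}\xrightarrow{(\varphi_0,\varphi_1,\varphi_2)}\OO_E(d)\to 0$ by $\OO_E(k)$ and taking $H^0$ shows that a degree‑$k$ syzygy is precisely a nonzero element of $H^0(E,\varphi^*\Omega_{\pr2}(1)(k))$, and since $\varphi^*\Omega_{\pr2}(1)=(\pi^*\Omega_{\pr2}\otimes\OO_X(L))|_E$ this equals $H^0\big(E,(\pi^*\Omega_{\pr2}\otimes\OO_X(L+M))|_E\big)$ for any divisor class $M$ on $X$ with $M\cdot E=k$. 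Thus it suffices to produce such a section for $k=(d-2)/2$.

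The crucial choice of $M$ is the \emph{semi‑adjoint class}
\[
F=\tfrac{d-2}{2}L-\textstyle\sum_i\tfrac{m_i-1}{2}E_i,
\]
which is an honest integral divisor class precisely because $d$ is even and every $m_i$ is odd, and which satisfies $2F=E+K_X+L$. Using that $E$ is exceptional with $g_E=0$, hence $E^2=K_X\cdot E=-1$ (equivalently $\sum_im_i=3d-1$ and $\sum_im_i^2=d^2+1$), we get $2F\cdot E=E^2+K_X\cdot E+L\cdot E=-1-1+d$, so $F\cdot E=(d-2)/2$, as needed. The first step I would carry out is a vanishing: $L+F-E=-\tfrac d2L+\sum_i\tfrac{m_i+1}{2}E_i$ with every $\tfrac{m_i+1}{2}\ge 1$, so the projection formula together with $\pi_*\OO_X(\sum_ic_iE_i)=\OO_{\pr2}$ for $c_i\ge 0$ identifies $H^0(X,\pi^*\Omega_{\pr2}\otimes\OO_X(L+F-E))$ with $H^0(\pr2,\Omega_{\pr2}(-d/2))$, which vanishes since $-d/2\le -2<1$. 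Hence the restriction map $H^0(X,\pi^*\Omega_{\pr2}\otimes\OO_X(L+F))\to H^0\big(E,(\pi^*\Omega_{\pr2}\otimes\OO_X(L+F))|_E\big)$ is injective, and it remains only to prove $h^0(X,\pi^*\Omega_{\pr2}\otimes\OO_X(L+F))>0$.

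For this, note $\pi^*\Omega_{\pr2}\otimes\OO_X(L+F)=\pi^*\big(\Omega_{\pr2}(d/2)\big)\otimes\OO_X(-\sum_i\tfrac{m_i-1}{2}E_i)$, so the projection formula and $\pi_*\OO_X(-\sum_ia_iE_i)=\mathcal{I}_Z$ (for $a_i\ge 0$, with $Z=\sum_ia_ip_i$ the corresponding fat point scheme) give $H^0(X,\,\cdot\,)=H^0(\pr2,\Omega_{\pr2}(d/2)\otimes\mathcal{I}_Z)$ for $Z=\sum_i\tfrac{m_i-1}{2}p_i$. Since $\Omega_{\pr2}(d/2)$ is locally free of rank $2$ and vanishing to order $a$ at a point imposes at most $2\binom{a+1}{2}=a(a+1)$ linear conditions, the scheme $Z$ imposes at most $\sum_i\tfrac{m_i-1}{2}\cdot\tfrac{m_i+1}{2}=\tfrac14(\sum_im_i^2-9)=\tfrac{d^2-8}{4}$ conditions on $H^0(\pr2,\Omega_{\pr2}(d/2))$, whose dimension is $(d/2)^2-1=\tfrac{d^2-4}{4}$ (twist the Euler sequence by $d/2-1$ and take cohomology). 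Therefore $h^0(\pr2,\Omega_{\pr2}(d/2)\otimes\mathcal{I}_Z)\ge\tfrac{d^2-4}{4}-\tfrac{d^2-8}{4}=1>0$, which completes the argument.

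Observe that the dimension count of the third paragraph is valid for arbitrary distinct points $p_i$; genericity of the $p_i$ enters only to guarantee that the class $E$ is represented by a smooth rational curve for which $\pi|_E$ is an immersion, so that the splitting type of $E$ is defined in the first place. The step I expect to be least routine is recognizing the semi‑adjoint class $F$ with $2F=E+K_X+L$: it is exactly the bookkeeping forced by the parity hypothesis, and it is what makes the final numerology land at $1$ rather than at something $\le 0$ — which is precisely where the hypothesis that $E$ is exceptional, i.e. $E^2=K_X\cdot E=-1$, is used in an essential way. (As a sanity check, when $(d_E,m_1,\dots,m_9)=(4,3,1,\dots,1)$ one gets $F=L-E_1$ and the predicted degree‑$1$ syzygy is the pencil of lines through the triple point $p_1$.)
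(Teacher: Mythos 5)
Your proof is correct, and it takes a genuinely different route from the paper's in the one step that matters. Both arguments pivot on the same semi-adjoint class $A=(E+K_X+L)/2$ (your $F$), and both ultimately produce a syzygy of degree $A\cdot E=(d_E-2)/2$ by showing that $H^0\bigl(X,\pi^*\Omega(1)\otimes\OO_X(A)\bigr)$ --- which is exactly $\ker\mu_A$, i.e.\ ${\rm le}(A)$ in the paper's notation --- is nonzero and injects into the corresponding space on $E$. The difference is in how the nonvanishing is obtained. The paper (Lemma \ref{usefullemma2} feeding into Proposition \ref{unbalsplitting}) first shows $A$ is effective and nef using the structure of the effective cone of a generic $9$-point blow up, invokes \cite{refH} to get $h^1(A)=0$, and then applies Riemann--Roch on $X$ (Lemma \ref{lelemma}) to get ${\rm le}(A)\ge1$. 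You instead push everything down to $\pr2$: by the projection formula the space in question is $H^0\bigl(\pr2,\Omega(d_E/2)\otimes\mathcal{I}_Z\bigr)$ for the fat point scheme $Z=\sum_i\frac{m_i-1}{2}p_i$, and the naive count $h^0(\Omega(d_E/2))-2\sum_i\binom{(m_i+1)/2}{2}=\frac{d_E^2-4}{4}-\frac{d_E^2-8}{4}=1$ (using $\sum_i m_i^2=d_E^2+1$) already forces a section. This buys you something real: no vanishing theorem, no nefness, and no positional hypothesis are needed for the count --- as you observe, genericity enters only to guarantee that $[E]$ is represented by a smooth rational curve so that $a_E$ is defined. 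What the paper's route buys in exchange is the sharper package (nefness, $h^1(A)=0$, ${\rm le}(A)=1$) that it reuses elsewhere, e.g.\ in Proposition \ref{9fatpts} and in the equality criterion of Proposition \ref{unbalsplitting}; your count yields only ${\rm le}(A)\ge1$, which is all this theorem requires. Your injectivity step, $H^0(\Omega(-d_E/2))=0$ after pushing forward the twist by $L+F-E$, is the same degree observation as the paper's $h^0(A-E+L)=0$ from $(A-E+L)\cdot L<0$. The remaining numerology ($F\cdot E=(d_E-2)/2$, $m_i\ge1$ so the pushforward arguments apply, and $\gamma_E=d_E-2a_E\ge2$) all checks out.
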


The hypothesis that $d_E$ be even and each $m_i$ be odd is equivalent to
the existence of a divisor class $[A]$ on $X$ such that $2[A]=[E+K_X+L]$. The proof that
$a_E\leq (d_E-2)/2$ depends on showing that $A$ has nontrivial linear syzygies
and relating these to syzygies of the trace of $A$ on $E$. I.e., it depends on
showing that the kernel of
$\mu_A:H^0(\OO_X(A))\otimes H^0(\OO_X(L))\to H^0(\OO_X(A+L))$
is nontrivial, and relating it to the kernel of
$H^0( \OO_E(A))\otimes H^0(\OO_X(L))\to H^0(\OO_E(A+L))$.

The formula $[A]=[E+K_X+L]/2$, which we can paraphrase by saying
that $A$ is a semi-adjoint of $L+E$,
is suggestive of some deeper structure that so far remains mysterious, but
extensive computational evidence suggests that the existence of $A$ is both necessary and sufficient for
$C$ to be unbalanced. In fact, up to permutation of the entries,
there are 1054 numerical types of exceptional curves $E$ on the blow up $X$ of $\pr2$
at $r=9$ generic points such that the image of $E$ is a curve $C$ of degree at most 61
(the number 61 is an arbitrary choice but large enough to give us some confidence in testing
our conjectures). For all of these 1054
the splitting gap was computed to be at most 2 (according to computations using randomly chosen points in
place of generic points), with the gap being exactly 2 in precisely the 39 cases for which an $A$ occurs
with $2[A]=[E+K_X+L]$. We thus make the following conjecture:

\begin{conj*}\label{9ptconj}
Let $E$ be an exceptional divisor on the blow up $X$ of $\pr2$ at $r=9$ generic points $p_i$.
Then there is a divisor class $[A]$ with $2[A]=[E+K_X+L]$
if and only if $\gamma_E>1$, in which case
$\gamma_E=2$ and $a_E=(E\cdot L-2)/2$.
\end{conj*}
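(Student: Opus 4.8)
The plan is to prove $a_E\le(d_E-2)/2$ by producing, for the parameterization $(\varphi_0,\varphi_1,\varphi_2)$ of $C=\pi(E)$, a nonzero syzygy of degree $(d_E-2)/2$; the inequality $\gamma_E\ge 2$ is then immediate, since $a_E+b_E=d_E$ forces $b_E\ge(d_E+2)/2$ and hence $\gamma_E=b_E-a_E\ge 2$. The syzygy will be manufactured by restricting a linear syzygy of the semi-adjoint class $A$ to $E$. First I would record the numerics. The hypothesis that $d_E$ be even with every $m_i$ odd is exactly what makes $[A]:=\tfrac12[E+K_X+L]$ an integral class; it also forces $d_E\equiv 0\pmod 4$, because all $m_i$ odd gives $\sum m_i^2\equiv 9\equiv 1\pmod 8$, whence $d_E^2=\sum m_i^2+E^2=\sum m_i^2-1\equiv 0\pmod 8$. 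Using $E^2=-1$, $E\cdot K_X=-1$ (adjunction), $K_X^2=0$, $L^2=1$, $L\cdot E=d_E$, one gets $A\cdot L=A\cdot E=(d_E-2)/2$, $A\cdot K_X=-2$, $A^2=(d_E-4)/2$, hence $\chi(\OO_X(A))=1+d_E/4$ and $\chi(\OO_X(A+L))=2+3d_E/4$, so that $3\,\chi(\OO_X(A))=\chi(\OO_X(A+L))+1$. I would also note $(A-E)\cdot L=-(d_E+2)/2<0$.

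Next I would show $\ker\mu_A\ne 0$, where $\mu_A\colon H^0(\OO_X(A))\otimes H^0(\OO_X(L))\to H^0(\OO_X(A+L))$. Since $r\le 9$ and the points are generic, $-K_X$ is nef; because $(K_X-A)\cdot(-K_X)=-2<0$ and $(K_X-A-L)\cdot(-K_X)=-5<0$, neither $K_X-A$ nor $K_X-A-L$ is effective, so Serre duality gives $h^2(\OO_X(A))=h^2(\OO_X(A+L))=0$. Thus $h^0(\OO_X(A))\ge\chi(\OO_X(A))$ for free, and---granting $h^1(\OO_X(A+L))=0$---one has $h^0(\OO_X(A+L))=\chi(\OO_X(A+L))$. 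Combining with the numerics above,
$$\dim\ker\mu_A\ \ge\ 3\,h^0(\OO_X(A))-h^0(\OO_X(A+L))\ \ge\ 3\,\chi(\OO_X(A))-\chi(\OO_X(A+L))\ =\ 1.$$

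The final step transfers such a linear syzygy to $E$. Tensoring the sequence $0\to\OO_X(A-E)\to\OO_X(A)\to\OO_E(A)\to 0$ and its analogue for $A+L$ with $H^0(\OO_X(L))$ yields a commutative square with the multiplication maps $\mu_A$, $\mu_{A|_E}$ as verticals; since $L$ is nef and effective and $(A-E)\cdot L<0$, we get $H^0(\OO_X(A-E))=0$, so the top horizontal $H^0(\OO_X(A))\otimes H^0(\OO_X(L))\to H^0(\OO_E(A))\otimes H^0(\OO_X(L))$ is injective, and a diagram chase shows the induced map $\ker\mu_A\to\ker\mu_{A|_E}$ is injective, where $\mu_{A|_E}\colon H^0(\OO_E(A))\otimes H^0(\OO_X(L))\to H^0(\OO_E(A+L))$. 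Hence $\ker\mu_{A|_E}\ne 0$. Identifying $E\cong\pr1$, we have $\OO_E(L)\cong\OO_{\pr1}(d_E)$ and $\OO_E(A)\cong\OO_{\pr1}((d_E-2)/2)$ (degree $A\cdot E$), and the image of $H^0(\OO_X(L))$ in $H^0(\OO_E(L))$ is precisely $\langle\varphi_0,\varphi_1,\varphi_2\rangle$, the space cutting out $\varphi=\pi|_E$. Picking a basis of $H^0(\OO_X(L))$, a nonzero element of $\ker\mu_{A|_E}$ becomes a nonzero triple $(g_0,g_1,g_2)$ with $g_i\in H^0(\OO_{\pr1}((d_E-2)/2))$ and $\sum_i g_i\varphi_i=0$: a syzygy of $(\varphi_0,\varphi_1,\varphi_2)$ of degree $(d_E-2)/2$. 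By Lemma \ref{b1}, $a_E=a_C\le(d_E-2)/2$, as required.

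The step I expect to be the main obstacle is the vanishing $h^1(\OO_X(A+L))=0$, equivalently that $h^0(\OO_X(A+L))$ achieves its expected value; this is the only place where genericity of the $p_i$ is essential. I would try to settle it using the well-understood cohomology of line bundles on the blow-up of $\pr2$ at $\le 9$ general points, after rewriting $A+L$ conveniently: one has $2(A+L)=E+E_1+\cdots+E_9$, so $A+L=\tfrac{d_E}{2}L-\sum\tfrac{m_i-1}{2}E_i$ has all multiplicities nonnegative. Concretely I would aim to show $A+L$ (or $A+L-K_X$) is nef and invoke a Kawamata--Viehweg-type vanishing, or argue by specialization of the $p_i$. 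Since $d_E\equiv 0\pmod 4$, the smallest instance is the numerical type $(4,3,1,1,1,1,1,1,1,1)$, where $A=L-E_1$ and $A+L=2L-E_1$, and every assertion above is checkable by hand; this case is a reliable guide to the general argument.
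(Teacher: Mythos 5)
The statement you are proving is Conjecture \ref{9ptconj} of the paper: it is explicitly left \emph{open} there, supported only by computational evidence on the 1054 exceptional classes of degree at most 61, so no complete proof should be expected. Your proposal establishes at most one of its three assertions. What you prove (modulo the gap below) is: if $[A]=\frac12[E+K_X+L]$ is an integral class, then $a_E\le (d_E-2)/2$ and hence $\gamma_E\ge 2$. That is precisely the paper's Theorem \ref{thm2}, proved there via Lemma \ref{usefullemma2}, Lemma \ref{lelemma} and Proposition \ref{unbalsplitting}, and your route is essentially the same: produce a nonzero element of $\ker\mu_A$ by a Riemann--Roch count, push it into the kernel of the multiplication map on $E$ using $H^0(A-E)=0$, and read off a syzygy of degree $A\cdot E=(d_E-2)/2$ of the parameterization via Lemma \ref{b1}. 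But you do not touch the converse implication (that $\gamma_E>1$ forces $[E+K_X+L]$ to be divisible by $2$ in ${\rm Cl}(X)$), nor the exact equalities $\gamma_E=2$ and $a_E=(d_E-2)/2$: your argument gives only $a_E\le(d_E-2)/2$ and nothing in it rules out $a_E$ being strictly smaller. Those are exactly the parts of the conjecture that remain open.

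Within the implication you do address, the step you flag as the main obstacle --- the vanishing $h^1(\OO_X(A+L))=0$ --- is indeed the only serious one, and the paper fills it differently from what you sketch. The restriction sequence $0\to\OO_X(A)\to\OO_X(A+L)\to\OO_L(A+L)\to0$ together with $H^1(\OO_L(A+L))=0$ on $L\cong\pr1$ reduces everything to $h^1(\OO_X(A))=0$ (this is also how Lemma \ref{lelemma} gets $h^0(A+L)=h^0(A)+A\cdot L+2$ and hence ${\rm le}(A)\ge A^2+2-A\cdot L=1$ from $h^1(A)=0$ alone). That vanishing is obtained not by Kawamata--Viehweg, which would confine you to characteristic $0$, but by showing $A$ is \emph{nef} and invoking Harbourne's vanishing theorem for nef divisors with $-K_X\cdot A>0$ on anticanonical rational surfaces \cite{refH}. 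Nefness is checked against the effective cone of a generic $9$-point blow-up: every effective class is a non-negative combination of $[-K_X]$ and classes of exceptional curves, and $2A\cdot C=(E+K_X+L)\cdot C\ge C\cdot K_X=-1$ is even, hence $\ge0$, for every exceptional $C\ne E$, while $2A\cdot E=d_E-2\ge0$ and $-K_X\cdot A=2>0$. If you incorporate that argument your proof of the one implication is complete, but you will have reproved Theorem \ref{thm2}, not the conjecture.
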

\noindent Proving the conjecture
would give a complete solution to Problem \ref{prob} for exceptional curves with
$r=9$. It would also allow one to determine
the number of generators in every degree but one in any minimal set of homogeneous generators for
any fat point ideal with support at up to 9 generic points of $\pr2$; see \S\ref{appls}.

Computational evidence suggests more is true. Conjecture \ref{9ptconj} is a special case of
the following more general conjecture which
relates the occurrence of unbalanced splittings to the existence of a certain divisor $A$,
but whereas Conjecture \ref{9ptconj} specifies the divisor $A$ precisely, it is not yet clear
how to find $A$ in the context of our more general conjecture.
To state the conjecture, we need the following definition:
\begin{defn*}
The \emph{linear excess} of a divisor $A$, written ${\rm le}(A)$, is the dimension of the kernel of $\mu_A$.
\end{defn*}

Note that if  ${\rm le}(A)=1$ then $h^0( \OO_X(A))\geq2$, and in particular $|A|$ is not empty.
\begin{conj*}\label{rptconj}
Let $E$ be an exceptional divisor on the blow up $X$ of $\pr2$ at $r$ generic points.
Then $a_E = min \{a\ |\ A\cdot E = a\}$, where the minimum is taken over all divisors $A$ such that $-K_X\cdot A=2$,
$h^1(\OO_X(A))=0$ and ${\rm le}(A)=1$. In particular, $E$ has unbalanced splitting if and only if
$A\cdot E <\lfloor\frac{E\cdot L}{2}\rfloor$ for some such divisor $A$.
\end{conj*}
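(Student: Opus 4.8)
The plan is to recast syzygies of the parameterizing triple as linear syzygies of divisors on $X$ by restriction to $E$, use this to prove the ``constructive'' inequality $a_E\le\min\{A\cdot E\}$, and then isolate the reverse inequality --- the production of a suitable divisor $A$ --- as the open point. Write $d=E\cdot L$ and let $\varphi_0,\varphi_1,\varphi_2$ parameterize $C=\pi(E)$; assume $d\ge2$, the cases $d\le1$ being trivial. Then $h^0(\OO_X(L-E))=0$, so under the isomorphism $E\cong\pr1$ the restriction map identifies a basis $x_0,x_1,x_2$ of $H^0(\OO_X(L))$ with $\varphi_0,\varphi_1,\varphi_2$, while $\OO_E(A+jL)$ depends only on $A\cdot E$ and $j$. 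Consequently a syzygy $\sum_i a_i\varphi_i=0$ of degree $a$ is exactly a nonzero element of $\ker\mu_{A,E}$, where $\mu_{A,E}\colon H^0(\OO_E(A))\otimes H^0(\OO_X(L))\to H^0(\OO_E(A+L))$ is the analogue of $\mu_A$ for the trace of $A$ on $E$ and $A$ is any class with $A\cdot E=a$. By Lemma \ref{b1}, $a_E$ is thus the least value of $A\cdot E$ for which $\ker\mu_{A,E}\ne0$, so the conjecture asserts that this least value is already attained by a class $A$ satisfying in addition $-K_X\cdot A=2$, $h^1(\OO_X(A))=0$ and ${\rm le}(A)=1$.

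I would establish $a_E\le\min\{A\cdot E\}$ first. Given such a class $A$, the condition ${\rm le}(A)=1$ produces a nonzero linear syzygy $\sum_i s_i x_i=0$ with $s_i\in H^0(\OO_X(A))$; if moreover ${\rm le}(A-E)=0$ --- which holds, for instance, whenever $A-E$ is not effective, so the $s_i$ cannot all vanish on $E$ --- then the restriction $\sum_i(s_i|_E)\varphi_i=0$ is a nonzero element of $\ker\mu_{A,E}$, hence a syzygy of degree $\le A\cdot E$, whence $a_E\le A\cdot E$. (More symmetrically, the commutative square relating $\mu_A$ to $\mu_{A,E}$, with vertical arrows the restriction to $E$, shows $\ker\mu_A$ injects into $\ker\mu_{A,E}$ when ${\rm le}(A-E)=0$, and with $h^1(\OO_X(A-E))=0$ and a Castelnuovo--Mumford type surjectivity it becomes an isomorphism.) The model case is $A=[E+K_X+L]/2$: this class is integral exactly when $d$ is even and every $E\cdot E_i$ is odd, it has $A\cdot E=(d-2)/2$ with $A-E$ not effective, and for $r=9$ it further satisfies $-K_X\cdot A=2$, $h^1(\OO_X(A))=0$ and ${\rm le}(A)=1$; so this instance of the conjectured inequality is precisely Theorem \ref{thm2}, refined by Conjecture \ref{9ptconj} when $r=9$. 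One direction of the ``in particular'' then follows at once: a class $A$ as above with $A\cdot E<\lfloor d/2\rfloor$ gives $a_E<\lfloor d/2\rfloor$, which together with $a_E+b_E=d$ and $a_E\le b_E$ forces $\gamma_E\ge2$.

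The reverse inequality $a_E\ge\min\{A\cdot E\}$ --- equivalently, the other direction of the ``in particular'' --- is the crux, and I expect it to be the main obstacle; it is exactly what keeps the statement conjectural. One must begin with a minimal syzygy $\sum_i a_i\varphi_i=0$, of degree $a_E$, and manufacture a class $A$ with $A\cdot E\le a_E$, $-K_X\cdot A=2$, $h^1(\OO_X(A))=0$ and ${\rm le}(A)=1$ whose linear syzygy restricts to it. The naive attempt --- take $A$ with $A\cdot E=a_E$ (so the $a_i$ are sections of $\OO_E(A)$), lift the $a_i$ to $H^0(\OO_X(A))$ once $h^1(\OO_X(A-E))=0$, and correct the resulting relation, which a priori lies only in $H^0(\OO_X(A+L-E))$ rather than being zero --- does not obviously close up, since controlling $h^1(\OO_X(A))$ and $-K_X\cdot A$ simultaneously while forcing the correction term to vanish seems to require geometric input about $X$ that is not yet available (the paper itself notes that ``it is not yet clear how to find $A$'' in this generality). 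A realistic intermediate target is the range $r\le9$, where $-K_X$ is nef and the classes with $-K_X\cdot A=2$ fall into finitely many orbits (for $r<9$), or a controlled family (for $r=9$), under Cremona transformations centered at the $p_i$: there, after using such transformations to bring $E$ into a normal form, the existence of $A$ reduces to a finite verification --- the mechanism behind Theorem \ref{thm2} and Conjecture \ref{9ptconj} --- and making that work throughout $r\le9$ is the natural first step toward the general conjecture.
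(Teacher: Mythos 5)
The statement you were asked to prove is Conjecture \ref{rptconj}, which the paper itself leaves open: the authors offer only computational evidence (the 1054 exceptional types with $d_E\le 61$ for $r=9$) and explicitly remark that ``it is not yet clear how to find $A$'' in this generality. So there is no proof in the paper to measure yours against, and your identification of the reverse inequality as the genuinely open point is exactly right. The half you do argue --- that a class $A$ with ${\rm le}(A)\ge1$ whose linear syzygy survives restriction to $E$ forces $a_E\le A\cdot E$ --- is precisely the content of the paper's Proposition \ref{unbalsplitting}, proved there by the snake lemma applied to the restriction of the multiplication maps to $E$ together with Lemma \ref{b1} and the sequence \eqref{eqnstar}; your hypothesis ${\rm le}(A-E)=0$ plays the role of the paper's hypothesis $H^0(A-E+L)=0$, and your ``model case'' $A=[E+K_X+L]/2$ is exactly how the paper derives Theorem \ref{thm2} from Lemma \ref{usefullemma2}. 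The heuristic that minimal syzygies of the parameterization should ``come from $\pr2$'' is likewise the paper's own motivation, spelled out in \S\ref{dal piano}.

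Two caveats on what you have actually established. First, even the inequality $a_E\le\min\{A\cdot E\}$ is not proved by your argument, because the conjecture's conditions on $A$ (namely $-K_X\cdot A=2$, $h^1(\OO_X(A))=0$, ${\rm le}(A)=1$) do not include the non-restriction-to-zero hypothesis you quietly add: a priori there could be an admissible $A$ whose linear syzygy dies on $E$ (equivalently ${\rm le}(A-E)>0$, which forces $A-E$ to be effective but nothing more) and which nevertheless has $A\cdot E<a_E$; ruling this out is itself part of what remains conjectural. Second, the asserted equality requires the minimum to be attained, i.e., the existence of some admissible $A$ with $A\cdot E=a_E$ even when $E$ is balanced; your closing paragraph correctly identifies this construction as the missing idea, and neither your lifting-and-correcting sketch nor anything in the paper supplies it. In short: your proposal is a sound reconstruction of the provable half and an accurate diagnosis of why the statement is a conjecture rather than a theorem, but it is not, and could not be, a proof.
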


In Section \ref{compgap} we describe explicit
computational methods for determining splitting invariants.
All of the computational methods, however, involve first having a parameterization $\varphi$.
In case $C$ is the image in $\pr2$ of a smooth rational curve
on a blow up $X$ of $\pr2$ at generic points $p_i$, we recall
in \S\ref{params} an
efficient way to obtain a parameterization by reducing
$C$ to a line via quadratic transformations (see also \cite[\S A.2.1]{refGHI1}).
In \S\ref{eulerosubsect} and \S\ref{mul} we study the problem from a $\pr 1$-centered point of view. We show how the splitting type is related to the saturation index of the homogeneous ideal $(\varphi_0, \varphi_1, \varphi_2)$ and we recover Ascenzi's result, Lemma \ref{splitlem}.

In Section \ref{sect3} we obtain our new bounds on the splitting invariants of smooth rational curves $D$
on surfaces $X$ obtained by blowing up distinct points $p_i$ of $\pr2$, which we apply to
Problem \ref{prob} to obtain our results for the case of $r\leq 9$ generic points.

Finally, in Section \ref{appls} we explain how our results can be applied
to the study of the graded Betti numbers of fat point subschemes of $\pr2$. In particular, we describe an infinite family of fat point schemes having generic Hilbert function and ``bad resolution".

\section{Computing the splitting gap}\label{compgap}

\subsection{Ascenzi's bounds}\label{Ascenzisubsect}
The cotangent bundle $ \Omega_{\pr 2}$ of the plane will be denoted simply by $\Omega$.

\par Let $C\subset\pr 2$ be a rational curve of degree $d$, with singularities at $p_1,\dots,p_r$,
and  multiplicity $m_{p_i}(C)=m_i$ at $p_i$. Consider the normalization $p: C' \to C$. In the following we write ${\OO}_{C'}(k)$ for the line bundle ${\OO}_{\pr 1}(k)$ of degree $k$ on $C'\cong \pr 1$. 
\par The Euler sequence on $\pr 2$
$$0 \to \Omega (1) \to {\OO}_{\pr 2} \otimes H^0({\OO}_{\pr 2}(1)) \to{\OO}_{\pr 2}(1) \to 0$$
is a sequence of vector bundles, hence the pullback through $p$ of its restriction to $C$ is still exact and gives a short exact sequence of vector bundles on $\pr 1$:
\begin{equation}\label{eqn1}
0 \to p^*\Omega (1)\to{\OO}_{C'}\otimes H^0({\OO}_{\pr 2}(1))  \to {\OO}_{C'}(d) \to 0.
\end{equation}

We have
$$p^*\Omega (1)\cong {\OO}_{C'}(-a_{C})\oplus{\OO}_{C'}(-b_{C})$$
with $0\leq a_{C}\leq b_{C}$ and $a_{C}+ b_{C}=d$.
Setting $V= H^0({\OO}_{\pr 2}(1)) $  we can rewrite \eqref{eqn1} as
\medskip \begin{equation}\label{eqnstar}
0 \to {\OO}_{C'}(-a_{C})\oplus{\OO}_{C'}(-b_{C}) \to{\OO}_{C'}\otimes V \to {\OO}_{C'}(d) \to 0.
\end{equation}

\medskip From here to the end of section 2.1, we focus on the case that the singularities of the curve are resolved after just one blow up. 
\par If $\pi:X\to\pr2$ is the morphism obtained by blowing up distinct points $p_i$, then as noted above
a basis for the divisor class group ${\rm Cl}(X)$ (of divisors modulo linear equivalence)
is given by the classes $[E_i]$ of the exceptional
divisors $E_i=\pi^{-1}(p_i)$ and the class $[L]$ of the pull back $L$ of a line in $\pr2$.
Given a curve $C\subset\pr 2$ of degree $d$, with singularities at $p_1,\dots,p_r$,
and  multiplicity $m_{p_i}(C)=m_i$
at $p_i$, the class $[C']$ of the strict transform $C'$ of $C$ is  $[dL-\sum m_iE_i]$.
\par So in our assumptions  $C=\pi (C')$ is the image of a smooth rational curve $C'\subset X$ whose
class is $[C']=[dL-\sum_im_iE_i]$.
\par We recall that $C'$ is an \emph{exceptional curve}
in $X$ if it is smooth and rational with $-1=(C')^2=d^2-\sum m_i^2$, which
by the adjunction formula implies $-1=K_X\cdot C'=-3d+\sum m_i$, since
$[K_X]=[-3L+E_1+\cdots+E_r]$.

Given a divisor $F$ on $X$, we will use $F$ to denote its divisor class and sometimes even the sheaf $\OO _X(F)$, and we will for convenience write $H^0(F)$ for $H^0(\OO _X(F))$. 

Since we can identify $V= H^0({\OO}_{\pr 2}(1))$ with  $H^0(L)$, we can rewrite \eqref{eqnstar} as
\begin{equation}\label{eqn5}
0 \to {\OO}_{C'}(-a_{C})\oplus{\OO}_{C'}(-b_{C}) \to{\OO}_{C'}\otimes H^0(L) \to {\OO}_{C'}(d) \to 0.
\end{equation}

In analogy with the Euler sequence, for each $i$ there is a bundle ${\mathcal M_i}$
giving a short exact sequence of bundles
$$0 \to {\mathcal M_i} \to {\OO}_X \otimes H^0(L-E_i) \to{\OO}_X(L-E_i) \to 0.$$
Restricting to $C'$ gives
$$0 \to {\mathcal M_i}|_{C'} \to {\OO}_{C'} \otimes H^0(L-E_i) \to{\OO}_{C'}(L-E_i) \to 0.$$
Using the injection of bundles $\OO_X(L-E_i)\to\OO_X(L)$ one can show that ${\mathcal M_i}$ is a subbundle
of $\pi^*\Omega (1)|_{C'}$ isomorphic to $\OO_{C'}(m_i-d)$, and
the sheaf quotient turns out to be isomorphic to the bundle ${\OO}_{C'}(-m_i)$.
Here the normalization morphism $p: C' \to C$ is $\pi \vert _{C'}$, so, given the isomorphism  $\pi^*\Omega (1)|_{C'}\cong {\OO}_{C'}(-a_C)\oplus{\OO}_{C'}(-b_C)$,
we have an exact sequence
$$0\to \OO_{C'}(m_i-d)\to {\OO}_{C'}(-a_C)\oplus{\OO}_{C'}(-b_C)\to {\OO}_{C'}(-m_i)\to 0$$
from which the following result of Ascenzi \cite{refAs1} is a direct consequence if we add the assumption that the singularities of the curve are resolved after just one blow up
(see the proof of Theorem 3.1 of \cite{refF1}
for details; also see \cite{refF2}).

\begin{lem}\label{splitlem} Let $C$ be a rational plane curve
of degree $d=d_C$ and assume that $C$ has a multiple point of  multiplicity $m$; let $a=a_C$, $b=b_C$. Then
we have $\min(m,d-m)\leq a\leq \min(d-m, \lfloor\frac{d}{2}\rfloor)$.
Thus if $d>2m+1$, then $m\leq a\leq \lfloor\frac{d}{2}\rfloor$,
while if $d\leq 2m+1$ (i.e., $C$ is Ascenzi), then the splitting type is completely determined:
if $d\leq 2m$ it is $(d-m,m)$, and if $d=2m+1$ it is $(m,d-m)$.
\end{lem}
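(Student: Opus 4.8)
The starting point is the short exact sequence of line bundles on $C'\cong\pr1$ displayed just above the statement,
$$0\to \OO_{C'}(m-d)\to \OO_{C'}(-a)\oplus\OO_{C'}(-b)\to \OO_{C'}(-m)\to 0,$$
together with the constraints $0\le a\le b$ and $a+b=d$ coming from \eqref{eqnstar}. The plan is to extract everything from this data by elementary considerations about morphisms of line bundles on $\pr1$, using repeatedly that $\mathrm{Hom}(\OO_{\pr1}(s),\OO_{\pr1}(t))\neq 0$ exactly when $t\ge s$, and that a surjective morphism of invertible sheaves on an integral scheme is an isomorphism.

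First I would record the trivial inequality $a\le\lfloor d/2\rfloor$, immediate from $a\le b$ and $a+b=d$. For the other half of the upper bound, $a\le d-m$, I would use that $\OO_{C'}(m-d)$ is a \emph{sub-bundle} of $\OO_{C'}(-a)\oplus\OO_{C'}(-b)$ (the quotient being locally free), so at least one of its two projections to $\OO_{C'}(-a)$ and $\OO_{C'}(-b)$ is a nonzero, hence injective, map of line bundles; this forces $m-d\le -a$ or $m-d\le -b$, and since $a\le b$ the first alternative is the weaker one, yielding $a\le d-m$. Combining gives $a\le\min(d-m,\lfloor d/2\rfloor)$.

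For the lower bound $a\ge\min(m,d-m)$ I would look at the surjection $q=(q_a,q_b)\colon\OO_{C'}(-a)\oplus\OO_{C'}(-b)\to\OO_{C'}(-m)$ and split into cases. If the component $q_a\colon\OO_{C'}(-a)\to\OO_{C'}(-m)$ is nonzero, then $a\ge m$ and we are done. If $q_a=0$, then $q_b\colon\OO_{C'}(-b)\to\OO_{C'}(-m)$ is itself surjective, hence an isomorphism, so $b=m$ and $a=d-m$; either way $a\ge\min(m,d-m)$.

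Finally the three ``thus'' assertions follow by comparing the two bounds under the stated hypotheses on $d$ versus $2m$: when $d>2m+1$ one checks $\min(m,d-m)=m$ and $d-m>\lfloor d/2\rfloor$, giving $m\le a\le\lfloor d/2\rfloor$; when $d\le 2m$ one has $\min(m,d-m)=d-m\le\lfloor d/2\rfloor$, pinning $a=d-m$ and $b=m$; and when $d=2m+1$ both bounds equal $m$, forcing $(a,b)=(m,d-m)$. As for difficulty, there is essentially no obstacle once the exact sequence is in hand: the real content lies in the construction of that sequence --- the auxiliary bundles $\mathcal{M}_i$ and the identification of the sub- and quotient line bundles of $\pi^*\Omega(1)|_{C'}$ --- which the paper recalls from \cite{refAs1, refF1, refF2} rather than reproving, and the only point that needs a word of care is the claim that a surjection of invertible sheaves on $C'$ is an isomorphism, which is where integrality of $C'$ (i.e., that $C'$ is a smooth rational curve, not merely a scheme) is used.
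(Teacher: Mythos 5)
Your proof is correct and follows the same route the paper takes in \S\ref{Ascenzisubsect}: it reads the bounds off the exact sequence $0\to\OO_{C'}(m-d)\to\OO_{C'}(-a)\oplus\OO_{C'}(-b)\to\OO_{C'}(-m)\to0$, supplying exactly the elementary line-bundle arguments (nonzero maps $\OO_{\pr1}(s)\to\OO_{\pr1}(t)$ force $t\ge s$; a surjection of invertible sheaves is an isomorphism) that the paper delegates to the proof of Theorem 3.1 of \cite{refF1}. The only remarks worth adding are that the paper also re-derives the lemma by a genuinely different syzygy/saturation-degree computation in Proposition \ref{Ascenzi}, and that integrality of $C'$ is really used for ``nonzero implies injective,'' while ``surjective implies isomorphism'' for invertible sheaves holds on any scheme.
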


\subsection{Splitting type, syzygies and the parameterization ideal}\label{eulerosubsect}

We denote the homogeneous coordinate ring of $\pr 1$ by $S=\KK[s,t]=\KK[\pr 1]$ and
that of $\pr2$ by $R=\KK[x_0,x_1,x_2]=\KK[\pr2]$.

Every rational curve $C \subset \pr 2$ can be defined parametrically by
homogeneous polynomials $\varphi_0,\varphi_1,\varphi_2\in S$
with no common factor and which give a $g^2_d$ series on $\pr 1$. They therefore define a
morphism $\varphi: \pr 1 \to \pr 2$ corresponding to the ring map

\begin{equation}\label{eqn2}
\begin{array}{cccc}
\widetilde \varphi : & R= \KK[x_0, x_1,x_2]& \to           & S=\KK[s,t]\\
&&&\\
                         & x_i                             & \mapsto &\varphi_i=\varphi _i(s,t).
\end{array}
\end{equation}
The kernel of this homomorphism is a principal ideal, generated by the implicit equation of
the curve $C$.

Assume as before that $C\subset\pr 2$ is a rational curve of degree $d$, with singularities at $p_1,\dots,p_r$, multiplicity $m_{p_i}(C)=m_i$ at $p_i$, and normalization $p: C' \to C$.
For notational simplicity, we set $a=a_C$, $b=b_C$.
Consider the sequence \eqref{eqnstar} twisted by $k-d$ for various $k\in \mathbb Z$:
$$0 \to {\OO}_{C'}(-a-d+k)\oplus{\OO}_{C'}(-b-d+k) \to
{\OO}_{C'}(-d+k)\otimes V \to {\OO}_{C'}(k) \to 0 \eqno(\star)_k$$
and search for the minimum $k\geq0$ such that $(\star)_k$ is exact on global sections. But
$$H^1({\OO}_{C'}(-a-d+k)\oplus{\OO}_{C'}(-b-d+k) )=0$$
if and only if $k\geq b+d-1$, so we have the following exact sequences
$$0 \to H^0({\OO}_{C'}(-a-d+k))\oplus H^0({\OO}_{C'}(-b-d+k)) \to H^0({\OO}_{C'}(-d+k))\otimes V
\xrightarrow{\psi_k} H^0({\OO}_{C'}(k) ) \eqno{(\star\star)_k}$$
with $\psi_k$ surjective for $k\geq b+d-1$. Note that we can identify $\bigoplus_kH^0({\OO}_{C'}(k) )$ with $S$.
Thus by taking the direct sum over all $k$, we obtain an exact sequence of graded $S$-modules.
With this in mind, we will write $S( \ell)_k$ in place of $H^0({\OO}_{C'}(\ell+k))$.

Now choose three linear forms $f_0,f_1,f_2$ which give a basis  of $V$.
Then an arbitrary element
$$\sum_{i=1,\dots,q} (h_i \otimes \sum_{j=0,1,2} c_{ij}f_j) \in S(-d)_k \otimes V$$
(where the $c_{ij}$ are constants) can be written as
$\sum_{j=0,1,2} \widetilde h_j \otimes f_j$ with $\widetilde h_j= \sum_{i=1,\dots,q} c_{ij}h_i $, and the map $\psi_k$ becomes
$$\begin{array}{cccc}
\psi _k: & S(-d)_k \otimes V                              & \to            & S_{k} \\
&&& \\
              & \sum_{j=0,1,2} \widetilde h_j \otimes f_j& \mapsto & \sum _{j=0,1,2} (\widetilde h_j)(f_j|_{C'})
\end{array}
$$
or, applying the natural identification of $S(-d)_k \otimes V$ with $S(-d)_k ^{\oplus 3}$,
$$\begin{array}{cccc}
\psi _k: & S(-d)_k ^{\oplus 3}& \to & S_{k} \\
&&& \\
 & ( \widetilde h_0, \widetilde h_1, \widetilde h_2)& \mapsto & \sum _{j=0,1,2} (\widetilde h_j)(f_j|_{C'})
 \end{array}
$$
Notice that in the identification of $C'$ with $\pr 1$, $|L|$
gives divisors of degree $d$ when restricted to $C'$, hence the $f_j|_{C'}$ are
forms of degree $d$ in the coordinate ring of $\pr 1$.
We usually choose $f_j=x_j$, $j=0,1,2$.

Taking direct sums of $(\star\star)_k$ for $k\geq b+d-1$ gives the following exact sequence of graded $S$-modules
$$0 \to(\oplus _{k\geq b+d-1} S(-a-d)_k)\oplus (\oplus _{k\geq b+d-1} S(-b-d)_k) \to \oplus _{k\geq b+d-1} S(-d)_k ^{\oplus 3} \xrightarrow{\oplus\psi_k}\oplus _{k\geq b+d-1} S_k\to 0$$
and by sheafifying we get back the exact sequence $(\star)_0$:
$$0 \to {\OO}_{C'}(-a-d)\oplus{\OO}_{C'}(-b-d) \to {\OO}_{C'}(-d)^{ \oplus3} \to {\OO}_{C'} \to 0.$$

Now assume the curve $C \subset \pr 2$ is given by parametric equations \eqref{eqn2},
and that the basis $f_0,f_1,f_2$ of $V$  we chose above is $x_0,x_1,x_2$.
Since the restriction of $x_j$ to $C$ is $\varphi_j$, we have $f_j|_{C'}=\varphi_j$ for $j=0,1,2$.

Notice that $C$ is a line if  and only if there is a degree zero relation $\sum c_j\varphi_j =0$
among  $\varphi_0,\varphi_1,\varphi_2$, with $c_j\in \KK$; that is, if and only if
$\varphi_0,\varphi_1,\varphi_2$ is not a minimal system of generators for the ideal
$J :=  (\varphi_0,\varphi_1,\varphi_2)$. For the rest of \S\ref{eulerosubsect} we assume $C$ is not a line,
i.e. that $d\geq 2$. Also notice that if we change the basis  $f_0,f_1,f_2$ of $V$
their restrictions to $C'$ still generate the same ideal $J$.

Regarding $J$ as a graded $S$-module, consider its minimal graded free resolution
\begin{equation}\label{eqn3}
0 \to  S(-c-d)\oplus S(-e-d)
\xrightarrow{\hbox{\tiny$\left(
\begin{matrix}
\alpha_0 & \beta _0  \\
\alpha_1 & \beta _1  \\
\alpha_2 & \beta _2
\end{matrix}
\right)$}}
S(-d)^{ \oplus3}
\xrightarrow{\hbox{\tiny$(\varphi_0\ \varphi_1\ \varphi_2)$}}
J\to 0,
\end{equation}
so we have $1\leq c\leq e$, $\deg \alpha_j=c$ and $\deg \beta_j=e$.
If we sheafify the sequence \eqref{eqn3}, we get the following exact sequence of ${\OO}_{C'}={\OO}_{\pr 1}$-modules:
\begin{equation}\label{eqn4}
0 \to {\OO}_{C'}(-c-d)\oplus{\OO}_{C'}(-e-d) \to
{\OO}_{C'}(-d)^{ \oplus3} \to {\OO}_{C'} \to 0.
\end{equation}
Since the zero scheme of the ideal $J$ is the empty set, hence
by the homogeneous Nullstellensatz the associated sheaf is ${\OO}_{C'}$.
Comparing this with $(\star)_0$, we see that $(c,e)=(a,b)$, i.e.:

\begin{lem}[{\cite[Lemma 1.1]{refAs2}}]\label{b1} Let $C$ be a rational plane curve of degree $d\geq 2$
and consider the pair $(a,b)$ with $1\leq a \leq b$ and $a+b=d$.
Then $(a,b)$ is the splitting type of $C$ if and only if $a$ is the minimal degree of a sygyzy of $J$.
\end{lem}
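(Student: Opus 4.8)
The plan is to read off the identification $(c,e) = (a,b)$ by comparing the two exact sequences of $\OO_{\pr1}$-modules that we have built, namely $(\star)_0$ and \eqref{eqn4}, using the rigidity of splitting on $\pr1$. First I would record what each sequence tells us. The sequence $(\star)_0$ was obtained by sheafifying the direct sum of the exact sequences $(\star\star)_k$ and presents $\OO_{C'}$ as the cokernel of a map $\OO_{C'}(-a-d)\oplus\OO_{C'}(-b-d)\to\OO_{C'}(-d)^{\oplus 3}$, where $(a,b)$ is, by construction, the splitting type of $C$ (that is, $\pi^*\Omega(1)|_{C'}\cong\OO_{C'}(-a)\oplus\OO_{C'}(-b)$, twisted by $-d$). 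The sequence \eqref{eqn4} was obtained by sheafifying the minimal graded free resolution \eqref{eqn3} of $J=(\varphi_0,\varphi_1,\varphi_2)$, and presents the sheaf associated to $J$ as the cokernel of a map $\OO_{C'}(-c-d)\oplus\OO_{C'}(-e-d)\to\OO_{C'}(-d)^{\oplus 3}$; here $a$ in the statement is $c$, the minimal degree of a syzygy of $J$ (since $1\le c\le e$ and $\deg\alpha_j=c$).

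Next I would observe that the sheaf associated to $J$ on $\pr1$ is $\OO_{C'}$: the saturation of $J$ (with respect to the irrelevant ideal of $S$) is all of $S$ because the zero scheme $V(J)\subseteq\pr1$ is empty — the $\varphi_j$ have no common zero since they have no common factor and define a morphism — so by the homogeneous Nullstellensatz the sheafification is $\widetilde S=\OO_{\pr1}=\OO_{C'}$. Hence both $(\star)_0$ and \eqref{eqn4} are resolutions of $\OO_{C'}$ by the same middle term $\OO_{C'}(-d)^{\oplus 3}$. I would then argue that the kernel subbundle is determined up to isomorphism: the kernel in \eqref{eqn4} is the kernel of $\OO_{C'}(-d)^{\oplus 3}\to\OO_{C'}$, which is also the kernel in $(\star)_0$, so $\OO_{C'}(-c-d)\oplus\OO_{C'}(-e-d)\cong\OO_{C'}(-a-d)\oplus\OO_{C'}(-b-d)$. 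By uniqueness of the splitting of a bundle on $\pr1$ into line bundles (Grothendieck), the unordered pairs of twists coincide, and since $c\le e$ and $a\le b$ we get $(c,e)=(a,b)$. In particular $a$, the smaller splitting invariant, equals $c$, the minimal syzygy degree; conversely if some $(a,b)$ with $a\le b$, $a+b=d$ arises as $(c,e)$ it is the splitting type. That gives both implications of the biconditional.

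The one subtlety I would be careful about — and what I expect to be the only real obstacle — is making sure the two sheafifications really produce the \emph{same} subsheaf of $\OO_{C'}(-d)^{\oplus 3}$, not merely abstractly isomorphic quotients. This requires knowing that the surjection $\OO_{C'}(-d)^{\oplus 3}\to\OO_{C'}$ appearing in both is induced by the same triple $(\varphi_0,\varphi_1,\varphi_2)$ (up to a change of basis of $V$, which as noted does not change $J$ and only changes the map by an automorphism of $\OO_{C'}(-d)^{\oplus 3}$): in $(\star)_0$ the map sends $(\widetilde h_0,\widetilde h_1,\widetilde h_2)\mapsto\sum_j\widetilde h_j(f_j|_{C'})$ with $f_j|_{C'}=\varphi_j$, which is exactly the sheafification of $(\varphi_0\ \varphi_1\ \varphi_2):S(-d)^{\oplus 3}\to J$. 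Once the two surjections are literally identified, their kernels are literally equal, and the isomorphism of line-bundle decompositions follows. Since \eqref{eqn3} is a \emph{minimal} resolution, the twists $c,e$ are intrinsic (no trivial summands can be cancelled), so the identification $c=a$ pins down $a$ as claimed, and the argument is complete.
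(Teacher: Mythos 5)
Your proposal is correct and follows essentially the same route as the paper: the text preceding the lemma builds exactly the two exact sequences $(\star)_0$ and \eqref{eqn4}, identifies the sheafification of $J$ with $\OO_{C'}$ via the homogeneous Nullstellensatz, and concludes $(c,e)=(a,b)$ by comparing them. Your added care in checking that the two surjections onto $\OO_{C'}$ are literally the same map (both induced by $(\varphi_0,\varphi_1,\varphi_2)$ after choosing the basis $x_0,x_1,x_2$ of $V$), so that the kernels agree and Grothendieck's splitting theorem pins down the twists, is precisely the detail the paper leaves implicit in the phrase ``comparing this with $(\star)_0$.''
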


Alternatively, recall that the \emph{saturation} of a homogeneous ideal $J\subseteq(s,t)\subset S$
is the largest homogeneous ideal ${\rm sat}(J)\subseteq (s,t)$
such that for some $i\geq 1$, ${\rm sat}(J)\cap (s,t)^i=J\cap (s,t)^i$.
We call the least such $i$ the \emph{saturation degree} of $J$. If $i=1$, we say $J$ is \emph{saturated}.
For example, if $J\subseteq (s,t)$ has homogeneous generators with no non-constant common factor,
such as $J=(s^3,t^2)$, then $(s,t)={\rm sat}(J)$ by the homogeneous Nullstellensatz,
and the saturation degree of $J$ is the least degree $i$ such that $J_i=S_i$.
Thus the saturation degree in such a case can be computed from the Hilbert function of $J$
(i.e., from the function giving the dimension of $J_i$ as a function of $i$).
We now have:

\begin{thm}\label{b2} Assume $C \subset \pr 2$ is a rational curve of degree $d\geq 2$
with splitting type $(a,b)$, $a\leq b$, which
is given by parametric equations
\eqref{eqn2}. If $\sigma(\varphi)$ denotes the saturation degree of the ideal
$(\varphi_0 ,\varphi_1 , \varphi_2)$, then
$$b+d-1=\sigma(\varphi)\leq 2d-2.$$
\end{thm}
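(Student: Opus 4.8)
The plan is to relate the saturation degree $\sigma(\varphi)$ of $J=(\varphi_0,\varphi_1,\varphi_2)$ to the number $b+d-1$ that already appeared as the threshold in the sequences $(\star\star)_k$, using the free resolution \eqref{eqn3} of $J$ and the exactness of $(\star\star)_k$ for $k\geq b+d-1$. First I would fix the basis $f_j=x_j$, so $f_j|_{C'}=\varphi_j$, and identify $\bigoplus_k H^0(\OO_{C'}(k))$ with $S$ as in the text. By Lemma \ref{b1} the pair $(a,b)$ with $1\le a\le b$, $a+b=d$ coincides with the degrees $(c,e)$ of a minimal syzygy pair of $J$; in particular the image of $\psi_k\colon S(-d)_k^{\oplus 3}\to S_k$ is exactly $J_k$. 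Therefore $(\star\star)_k$ being \emph{surjective on global sections} is the statement that $J_k = S_k$, i.e.\ that $k$ is at least the saturation degree $\sigma(\varphi)$ (recall that since $\varphi_0,\varphi_1,\varphi_2$ have no common factor, $\mathrm{sat}(J)=(s,t)$ and $\sigma(\varphi)$ is the least $i$ with $J_i=S_i$). So the content of the theorem is that this least $k$ equals $b+d-1$.

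The key steps, in order, are: (1) Record from the discussion preceding $(\star\star)_k$ that $\psi_k$ is surjective for all $k\geq b+d-1$, which gives $J_k=S_k$ there and hence $\sigma(\varphi)\le b+d-1$. (2) Prove the reverse inequality $\sigma(\varphi)\ge b+d-1$, equivalently $J_{b+d-2}\ne S_{b+d-2}$. For this I would compute $\dim J_k$ directly from the minimal free resolution \eqref{eqn3}: since the resolution is $0\to S(-a-d)\oplus S(-b-d)\to S(-d)^{\oplus 3}\to J\to 0$ (using $(c,e)=(a,b)$), additivity of Hilbert functions on $\pr 1$ gives, for every $k$,
\[
\dim J_k = 3\binom{k-d+1}{1}_{+} - \binom{k-a-d+1}{1}_{+} - \binom{k-b-d+1}{1}_{+},
\]
where $\binom{n+1}{1}_{+}=\max(n+1,0)=\dim S_n$. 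Evaluating at $k=b+d-2$: the term $\binom{k-b-d+1}{1}_{+}=\binom{-1}{1}_{+}=0$ vanishes, while $\dim S_{k}=k+1=b+d-1$, $3\dim S_{k-d}=3(b-1)$, and $\dim S_{k-a-d}=\max(b-a-1,0)$. Since $a\le b$, if $a<b$ this is $b-a-1$ and one gets $\dim J_{b+d-2}=3(b-1)-(b-a-1)=2b+a-2=b+d-2 < b+d-1=\dim S_{b+d-2}$; if $a=b$ (so $d$ even, $a=b=d/2$) the term is $0$ and $\dim J_{b+d-2}=3(b-1)=3b-3$, and one checks $3b-3<3b-1=b+d-1$, again strict. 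Hence $J_{b+d-2}\subsetneq S_{b+d-2}$, giving $\sigma(\varphi)\ge b+d-1$, and combined with (1) the equality $b+d-1=\sigma(\varphi)$. (3) The final bound $b+d-1\le 2d-2$ is immediate: $b\le d-1$ because $a\ge 1$ (here $a\ge 1$ holds since $C$ is not a line, as already noted in \S\ref{eulerosubsect}).

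I expect step (2) to be the only real obstacle, and it is mild: one must be careful that \eqref{eqn3} is a \emph{minimal} resolution so that the Betti numbers are genuinely $(3;1,1)$ with no cancellation, and one must handle the boundary regime where the truncation functions $\binom{\,\cdot\,}{1}_{+}$ change formula (in particular the vanishing of the $S(-b-d)$ contribution exactly at $k=b+d-2$ is what makes the argument work). Once the Hilbert function count is set up correctly the strict inequality $\dim J_{b+d-2}<\dim S_{b+d-2}$ drops out in both the balanced and unbalanced cases. Alternatively, step (2) can be phrased cohomologically: $H^1$ of the left-hand bundle in $(\star)_{b+d-2}$ has $\OO_{C'}(-b-d+(b+d-2))=\OO_{\pr 1}(-2)$ as a summand, which has $h^1=1\ne 0$, so $(\star\star)_{b+d-2}$ fails to be surjective on $H^0$, i.e.\ $J_{b+d-2}\ne S_{b+d-2}$ — this is really the same computation in disguise and either presentation will do.
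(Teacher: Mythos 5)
Your proof is correct and follows essentially the same route as the paper: both directions rest on identifying the degree-$k$ piece of the minimal resolution \eqref{eqn3} (with $(c,e)=(a,b)$ via Lemma \ref{b1}) with $(\star\star)_k$ and locating the surjectivity threshold at $k=b+d-1$. Your explicit Hilbert-function count at $k=b+d-2$ just spells out the paper's terse assertion that $(\star\star)_k$ is not exact on the right for $k\le b+d-2$, and is equivalent to the $h^1(\OO_{\pr1}(-2))\neq 0$ observation you make at the end.
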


\begin{proof}
If $k\geq b+d-1$ the sequence \eqref{eqn3} in degree $k$ is the same as $(\star\star)_k$.
On the other hand, $(\star\star)_k$ is not exact on the right for $k\leq d+b-2$, so
$J_k=S_k$ if and only if $k\geq b+d-1$, hence $b+d-1=\sigma(\varphi)$.
Since $b\leq d-1$, we also have $\sigma(\varphi) \leq 2d-2$.
\end{proof}

\subsection{Parametric equations and multiple points}\label{mul}

Assume that our rational plane curve $C$ is given by parametric
equations as in \eqref{eqn2}, and has a multiple point $p$
of multiplicity $m$, where $p=[\ell_0,\ell_1,\ell_2]$. We can assume
$\ell_0=1$; we define $q(s,t)$ to be the greatest common factor of
$\varphi_1 (s,t)-\ell_1\varphi_0(s,t)$ and $\varphi_2
(s,t)-\ell_2\varphi_0(s,t)$. Hence there exist $h,g \in S_{d-m}$ such
that $\varphi_1 =\ell_1\varphi_0+qh$ and $\varphi_2
=\ell_2\varphi_0+qg$.

The generic line through $p$, $\alpha (x_1-\ell _1x_0)+\beta (x_2-\ell _2x_0)=0$, meets $C$ at $p$ with multiplicity $m$; i.e., the equation $q(\alpha h+\beta g)=0$ has $m$ roots counted with multiplicity corresponding to the point $p$. Hence the polynomial $q$ defines a divisor $n_1w_1+\dots + n_rw_r$ on $\pr 1$, with $\varphi (w_1)=\dots =\varphi (w_r)=p$ and $n_1+\dots+n_r=m$.

This means that for each point $p$ of multiplicity $m$ for
$C$ the ideal $J=(\varphi_0 ,\varphi_1 , \varphi_2)$ can be written
as $J=(\varphi_0, qh,qg)$ with $\deg(\varphi_0)=d$, $
\deg(q)=m$, $\deg(h)=\deg(g)=d-m$ for $q$, $g$, $h$
depending on the singular point $p$. This allows us to better
understand Theorem \ref{b2} and to recover Lemma \ref{splitlem}.

\begin{prop}\label{Ascenzi} Let $C \subset \pr 2$ be a rational plane curve of degree $d$ given
parametrically by $\varphi : \pr 1 \rightarrow \pr2$, as before. Let
$p\in C$ be a multiple point $p$ of multiplicity $m\geq 1$,
and let $(a,b)$ be the splitting type of $C$. Then $a\geq \min \{m,d-m\}$
and $b\leq \max \{m,d-m\}$, with
$(a,b)=(d-m,m)$ if $d\leq 2m$, and $(a,b)=(m,d-m)=(m,m+1)$ if $d=2m+1$.
\end{prop}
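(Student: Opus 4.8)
The plan is to combine Lemma \ref{b1}, which says $a = a_C$ is the least degree of a homogeneous syzygy of $J = (\varphi_0, \varphi_1, \varphi_2)$, with the factorization $J = (\varphi_0, qh, qg)$ extracted above from the multiple point $p$, where $\deg q = m$ and $\deg h = \deg g = d - m$. Since $a + b = d$, the two displayed bounds $a \geq \min\{m, d-m\}$ and $b \leq \max\{m, d-m\}$ are equivalent, so it is enough to bound $a$ from below and, in the two Ascenzi ranges, to pin $a$ down exactly.

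Step 1: two coprimality observations. Since $\varphi_0, \varphi_1, \varphi_2$ have no common factor, no irreducible polynomial can divide both $q$ and $\varphi_0$ (it would then divide $\varphi_1 = \ell_1\varphi_0 + qh$ and $\varphi_2 = \ell_2\varphi_0 + qg$ as well), so $\gcd(q, \varphi_0) = 1$; and from $q = \gcd(\varphi_1 - \ell_1\varphi_0,\, \varphi_2 - \ell_2\varphi_0) = q\gcd(h, g)$ we get $\gcd(h, g) = 1$. I also note that $g \neq 0$ and $h \neq 0$: e.g.\ $g = 0$ would mean $x_2|_{C} = \ell_2 x_0|_{C}$, forcing $C$ to be a line, which is excluded.

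Step 2: the lower bound. Let $(A, B, C)$ be a nonzero homogeneous syzygy of degree $\delta$, i.e.\ $A\varphi_0 + B(qh) + C(qg) = 0$, so $A\varphi_0 = -q(Bh + Cg)$ and hence $q \mid A\varphi_0$; coprimality then gives $q \mid A$. If $A \neq 0$ this already yields $\delta \geq \deg q = m$. If $A = 0$, then $Bh + Cg = 0$ with $(B, C) \neq (0,0)$; since $g \neq 0$ this forces $B \neq 0$, and $g \mid Bh$ together with $\gcd(g, h) = 1$ gives $g \mid B$, so $\delta \geq \deg g = d - m$. In all cases $\delta \geq \min\{m, d-m\}$, whence $a \geq \min\{m, d-m\}$ and $b = d - a \leq \max\{m, d-m\}$.

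Step 3: the sharp cases. The triple $(0, g, -h)$ is a nonzero syzygy of degree $d - m$, so $a \leq d - m$ always. If $d \leq 2m$ then $\min\{m, d-m\} = d - m$, so Step 2 forces $a = d - m$ and $b = m$. If $d = 2m + 1$, Step 2 gives $a \geq m$, while $a \leq b$ with $a + b = 2m+1$ odd forces $a \leq m$; hence $a = m$ and $b = m+1 = d - m$. I expect no conceptual obstacle: the only care needed is the elementary divisibility bookkeeping and excluding the degenerate possibilities $g = 0$ or $h = 0$ (equivalently, using that $C$ is an honest curve rather than a line). The actual content is simply that the local data $q, g, h$ of $\varphi$ at $p$ governs precisely the low-degree syzygies of $J$.
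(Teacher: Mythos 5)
Your proof is correct, and it takes a genuinely different route from the paper's. Both arguments start from the same decomposition $J=(\varphi_0,qh,qg)$ with $\deg q=m$ and $\deg h=\deg g=d-m$, but the paper works on the $b$-side: it invokes Theorem \ref{b2} (i.e.\ $b+d-1=\sigma(\varphi)$) and pins down the least $k$ for which the multiplication map $\nu_k\colon J_d\otimes S_k\to S_{d+k}$ is onto, computing the relevant Hilbert functions from the Koszul resolutions of the regular sequences $(h,g)$ and $(\varphi_0,q)$. You instead work on the $a$-side via Lemma \ref{b1}: the divisibility argument ($q\mid A$ when $A\neq 0$, and $g\mid B$ when $A=0$) gives the lower bound $a\geq\min\{m,d-m\}$ directly, and the explicit Koszul syzygy $(0,g,-h)$ of degree $d-m$ supplies the upper bound needed in the sharp cases. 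The coprimality checks you record ($\gcd(q,\varphi_0)=1$, $\gcd(h,g)=1$, $h\neq0$, $g\neq0$) are exactly the facts needed, the reduction of the two displayed inequalities to one via $a+b=d$ is fine, and the passage between syzygies of $(\varphi_0,qh,qg)$ and of $(\varphi_0,\varphi_1,\varphi_2)$ is immediate since the change of generators is unipotent over $\KK$. What your version buys is a shorter, more self-contained argument with no Hilbert-function bookkeeping and no appeal to the saturation degree, and it makes the extremal low-degree syzygy visible (when $d\leq 2m$ the minimal syzygy is literally the Koszul relation between $qh$ and $qg$); what the paper's version buys is consistency with the saturation-degree point of view of Theorem \ref{b2}, which is the computational thread of that section. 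The only standing hypothesis you rely on, exactly as the paper does, is $d\geq2$ so that Lemma \ref{b1} applies.
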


\begin{proof}  As above,  let $J=(\varphi_0,\varphi_1,\varphi_2)=(\varphi_0, qh,qg)$
where $\varphi_0, q,h,g \in S=\KK[s,t]$, $\deg(\varphi_0)=d$,
$\deg(q)=m$ and $\deg(h)=\deg(g)=d-m$. Since $h,g$
have no common factor, as well as $\varphi_0, qh,qg$, we have  $\dim
_K \langle h,g\rangle=2$, and $J_{d} =\langle\varphi_0\rangle\oplus
\; q\langle h,g\rangle$.
\medskip We now look at the multiplication
maps $$\nu _k: J_{d} \otimes S_{k} \to S_{d+k}.$$ Let $\bar k$ be
the least $k$ such that $\nu _{\bar k}$ is onto;  the saturation
degree $\sigma (\varphi)$ of $J$ is $d+\bar k$, and by  Theorem
\ref{b2}, $b=\bar k+1$.

We first prove that $\nu _{m-2}$ is never onto, so we conclude that $b \geq m$: we have $ Im(\nu)_{m-2} = \varphi _0 S_{m-2} +q(h,g)_{d-2}$ with $q(h,g)_{d-2} \subseteq qS_{d-2}$,
hence $\dim Im(\nu)_{m-2}\leq m-1+d-1= d+m-2 <\dim S_{d+m-2}$.
\medskip \par
Since both $(h,g)$ and $(\varphi_0, q)$ are regular sequences in
$S$, we have the minimal free resolutions for the ideals $(h,g)$ and
$(\varphi_0, q)$ of $S$:
$$
0\to S(-2d+2m)\to \oplus ^2 S(-d+m) \to (h,g)\to 0 \eqno{(\star')}
$$

$$
0\to S(-d-m)\to S(-d)\oplus  S(-m) \to (\varphi_0 , q)\to 0
\eqno{(\star'')}
$$

\medskip
Assume $d\leq 2m$; then $(\star')$ gives $\dim (h,g)_{d-1} = 2\dim S(m-1) - \dim
S(-d+2m-1) = 2m-(2m-d) = d$, so $(h,g)_{d-1}= S_{d-1}$, this
implying that $(h,g)_{k}= S_{k}$ for $k\geq d-1$.

Let us consider $\nu _{m-1}$. We have $J_{d} =\langle \varphi_0\rangle \oplus \; q\langle h,g\rangle$, so that $Im (\nu _{m-1})= \varphi_0
S_{m-1} + q(h,g)_{d-1}=\varphi_0 S_{m-1} + qS_{d-1}=(\varphi_0 ,
q)_{d+m-1}$. The sequence $(\star'')$ gives  $\dim (\varphi_0 ,
q)_{d+m-1} = m+d $, so that $\nu _{m-1}$ is surjective and $b=m$ in
this case. Since we are in the assumption $d-m\leq m$, in particular
we have $b\leq \max \{m,d-m\}$, and hence $a\geq \min \{m,d-m\}$.

Now let $d= 2m+u$, with $u\geq 1$. From the resolution $(\star')$ of $(h,g)$ we get
that $(h,g)_{d+u-1}=S_{d+u-1}$ and this implies, as in the previous
case, that $\nu_{d-m-1}$ is surjective, i.e. that $b\leq d-m$. Since
we are in the assumption $d-m>m$, we have $b\leq \max \{m,d-m\}$. In
particular, when $u=1$, this trivially implies that
$(a,b)=(m,d-m)=(m,m+1)$.
\end{proof}

\subsection{A  moving line algorithm for the splitting type}\label{CAD}

These kinds of questions are of interest also to people working in
Computer Aided Geometric Design (CAD). In fact one of the problems
they are interested in is how to compute the implicit function
defining a rational plane curve which is given by parametric
equations. This is a classical problem in algebraic geometry, traditionally solved
via resultants, but this gives rise to computing
determinants of rather large matrices, hence it is quite valuable to
find more efficient ways to get the implicit equation. One of the ways
this is done is by the method of ``moving lines"
\cite{refCSC,refSSQK,refSGD}. We will see that
this approach also offers algorithms with which we are able to deal with the
splitting problem.

\begin{defn}\label{0}
A \emph{moving line of degree $k$ for $C$} is an equation of the form
$\alpha_0(s,t)x_0+\alpha_1(s,t)x_1+\alpha_2(s,t)x_2=0$ where
$\alpha_i(s,t)\in K[s,t] _k$, such that
$\alpha_0(s,t)\varphi_0(s,t)+\alpha_1(s,t)\varphi_1(s,t)+\alpha_2(s,t)\varphi_2(s,t)$
is identically zero; hence a moving line  of degree $k$ is nothing
else than a family of lines parameterized by $\pr1$, giving a
syzygy of degree $k$ of the ideal $(\varphi_0,\varphi_1,\varphi_2)$ in ${
\kappa }[s,t] $. Hence, if $Syz(\varphi)$ is the sygyzy module of the
parameterization $\varphi$ for the curve $C$,
$(\alpha_0,\alpha_1,\alpha_2)\in Syz(\varphi)_k$.
\end{defn}

Now assume $d=2n$, and let us write explicitly the
parameterization of $C$: $\varphi _i=\varphi_{i0}s^{2n}+\dots+\varphi_{i,2n}t^{2n}$ for $i=0,1,2$.
 Consider a moving line in degree $n-1$ for $C$:
$\beta_0(s,t)x_0+\beta_1(s,t)x_1+\beta_2(s,t)x_2=0$ where
$\beta_i(s,t)=\sum _{k=0}^{n-1}B_{ik}s^kt^{n-1-k}$, satisfying the condition
$\beta_0(s,t)\varphi_0(s,t)+\beta_1(s,t)\varphi_1(s,t)+\beta_2(s,t)\varphi_2(s,t)\equiv 0,$ that is
\begin{equation}\label{ast1}
\sum _{i=0}^{2}\sum _{k=0}^{n-1}\sum _{j=0}^{2n}B_{ik}\varphi_{ij}s^{2n+k-j}t^{n-1-k+j}\equiv 0.
\end{equation}
Note that each monomial in $s$ and $t$ has total degree $3n-1$.
Rewriting \eqref{ast1} in terms of the powers $t^l$, we have
\begin{equation}\label{ast2}
\sum_{l=0}^{3n-1}\sum _{i=0}^{2}\sum _{a,b}B_{i,n-1-b}\varphi_{ia}s^{3n-1-l}t^l\equiv 0,
\end{equation}
where the inner sum is over all $a$ and $b$ such that $0\leq a\leq2n$, $0\leq b\leq n-1$ and
$a+b=l$.
This homogeneous polynomial is identically zero if and only if all of the coefficients are zero; i.e.,
if and only if for each 
$0\leq l\leq 3n-1$ 
we have
$$\sum _{i=0}^{2}\sum _{a,b}B_{i,n-1-b}\varphi_{ia}=0.$$

Hence to say that there exists a moving line in degree $n-1$ for $C$ is
equivalent to saying that the following linear system of $3n$
equations in the $3n$ variables $B_{00}, \dots, B_{2,n-1}$ has a non-trivial solution:
$$\underbrace{
\left(\begin{matrix}
\varphi_{0,2n}&\varphi_{1,2n}&\varphi_{2,2n}&0& \dots &0\\
&\dots& & &\dots &\\
0& \dots &0&\varphi_{0,0}&\varphi_{1,0}&\varphi_{2,0}
\end{matrix}\right)}_M
\left(\begin{matrix}
B_{00} \\
B_{10}\\
B_{20}\\
\vdots \\
B_{0,n-1}\\
B_{1,n-1}\\
B_{2,n-1}
\end{matrix}\right)
=
\left(\begin{matrix}
0 \\
0\\
0\\
\vdots \\
0\\
0\\
0
\end{matrix}\right)$$
where the $3n\times 3n$ matrix of the system $M(\varphi) = M = (m_{u,v})$ is defined
by $m_{u,v}$ as follows: writing $v=3w+i$ as a multiple of 3
with remainder $i$ (so $0\leq i \leq2$), then $m_{u,v}=\varphi_{i,2n+w-u}$ if
$0\leq 2n+w-u\leq 2n$, and $m_{u,v}=0$ otherwise.

So, if $d=2n$,  we have proved that there exists a
moving line in degree $n-1$ for $C$ if and only if $\det M=0$. \b
Moreover, ${\rm rk}\ M= 3n-p$ if and only if there are exactly $p$
independent moving lines  in degree $n-1$ for $C$.

In the odd degree case, $d=2n+1$, the same kind of computation
gives a condition analogous to \eqref{ast2}, with an equation of degree  $3n$ in
$s,t$; hence $M$ becomes a $(3n+1) \times 3n$ matrix, and the
analogous linear system has a non-trivial solution if and
only if ${\rm rk}\ M\leq 3n-1$; as before we find that there are
exactly $p$ independent moving lines  in degree $n-1$ for $C$ if and
only if ${\rm rk}\ M= 3n-p$.

Notice that if there are exactly $p$ independent moving lines, i.e. $\dim Syz(\varphi)_{n-1} =
p$, then there is a unique moving line of degree $n-p$, or,
equivalently, the splitting type of $C$ is $(n-p,n+p)$.

In summary, we have the following result (see also \cite[Proposition 5.3]{refSGD}):

\begin{thm}\label{splittingM}
Let $C\subset \pr2$ be a rational
curve of degree $d =2n+\delta$, $\delta \in \{0,1\}$, parameterized by
$\varphi _i=\varphi_{i0}s^{2n}+\dots+ \varphi_{i,2n}t^{2n}$ for $i=0,1,2$, and
define the $(3n+\delta)\times 3n$ matrix
$$M(\varphi)=M = (m_{u,v})_{0\leq u \leq 3n-1+\delta, 0\leq v \leq 3n-1}$$ as follows:
writing $v=3w+i$ as a multiple of 3
with remainder $i$ (so $0\leq i \leq2$), then $m_{u,v}=\varphi_{i,2n+w-u}$ if
$0\leq 2n+w-u\leq 2n$, and $m_{u,v}=0$ otherwise.
Then the splitting type of $C$ is $(n-p,n+p)$ if and only if ${\rm rk}\ M =
3n-p$.
\end{thm}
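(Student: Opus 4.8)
The plan is to recognize Theorem \ref{splittingM} as the assembly of two facts already in hand: the moving-line dictionary of \S\ref{CAD}, which turns ``moving line of degree $n-1$ for $C$'' into ``vector in the kernel of $M(\phi)$'', and Lemma \ref{b1} together with the minimal free resolution \eqref{eqn3} of $J=(\phi_0,\phi_1,\phi_2)$, which computes $a_C$ from the graded pieces of the syzygy module $Syz(\phi)$. I would therefore establish the equivalence in two steps: first that $\dim Syz(\phi)_{n-1}=3n-{\rm rk}\,M$, and second that $a_C=n-\dim Syz(\phi)_{n-1}$. Chaining these gives: the splitting type of $C$ is $(n-p,n+p+\delta)$ --- which is the $(n-p,n+p)$ of the statement when $d$ is even --- if and only if $a_C=n-p$, if and only if $\dim Syz(\phi)_{n-1}=p$, if and only if ${\rm rk}\,M=3n-p$.

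For the first step I would proceed as follows. By Definition \ref{0}, a moving line of degree $n-1$ for $C$ is exactly a triple $(\beta_0,\beta_1,\beta_2)$ with $\beta_i\in\KK[s,t]_{n-1}$ and $\sum_i\beta_i\phi_i\equiv0$, i.e. an element of $Syz(\phi)_{n-1}$. Writing $\beta_i=\sum_{w=0}^{n-1}B_{iw}s^wt^{n-1-w}$ and $\phi_i=\sum_{j=0}^{2n+\delta}\phi_{ij}s^{2n+\delta-j}t^j$, the polynomial $\sum_i\beta_i\phi_i$ is homogeneous of degree $3n-1+\delta$, so it vanishes identically if and only if, for each $u$ with $0\leq u\leq 3n-1+\delta$, the coefficient of $s^ut^{3n-1+\delta-u}$ vanishes. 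Carrying out this reorganization --- exactly the computation of \eqref{ast1}--\eqref{ast2} in the even case, and the analogous computation when $\delta=1$ --- one checks that in the $u$-th such equation the coefficient of the unknown $B_{iw}$ is $\phi_{i,\,2n+\delta+w-u}$ when $0\leq 2n+\delta+w-u\leq 2n+\delta$ and $0$ otherwise; reading off these coefficients with the $3n$ columns indexed by $v=3w+i$ (so column $v$ carries $B_{iw}$) and the $3n+\delta$ rows by $u$, the coefficient matrix is precisely $M=M(\phi)$. Hence $Syz(\phi)_{n-1}\cong\ker M$, and since $M$ has $3n$ columns, $\dim Syz(\phi)_{n-1}=3n-{\rm rk}\,M$. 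This bookkeeping --- verifying that the coefficient matrix is exactly the $M(\phi)$ of the statement --- is the only genuine computation in the argument, and the step I expect to be the main (though entirely routine) obstacle to write out with care.

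For the second step I may assume $d=2n+\delta\geq2$, since if $d=1$ then $C$ is a line and there is nothing to prove. By the minimal free resolution \eqref{eqn3}, together with the identification $(c,e)=(a_C,b_C)$ recorded just before Lemma \ref{b1}, the syzygy module $Syz(\phi)$ is the free graded $S$-module $S(-a_C)\oplus S(-b_C)$ with $1\leq a_C\leq b_C$ and $a_C+b_C=d$. Since $a_C\leq b_C$, we get $a_C\leq n$ and $b_C\geq n>n-1$, so in degree $n-1$ only the first summand contributes:
$$\dim Syz(\phi)_{n-1}=\dim S_{(n-1)-a_C}+\dim S_{(n-1)-b_C}=\dim S_{(n-1)-a_C}=\max(0,\,n-a_C)=n-a_C.$$
Thus $\dim Syz(\phi)_{n-1}=p$ if and only if $a_C=n-p$, equivalently $b_C=d-(n-p)=n+p+\delta$ and the splitting type of $C$ is $(n-p,n+p+\delta)$. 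Combining this with the first step completes the proof.
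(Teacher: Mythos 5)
Your proof is correct and follows essentially the same route as the paper: the moving-line dictionary of \S\ref{CAD} identifying $Syz(\phi)_{n-1}$ with $\ker M$, combined with the computation $\dim Syz(\phi)_{n-1}=n-a_C$ coming from the free resolution \eqref{eqn3} and Lemma \ref{b1}. Your remark that for odd $d$ the splitting type is really $(n-p,n+p+1)$ correctly flags the slight imprecision in the statement.
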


Thus Theorem \ref{splittingM} gives an algorithm to compute the
splitting type of every rational plane curve once we have a
parameterization for it.

\subsection{Finding parameterizations}\label{params}

Let $\pi:X\to\pr2$ be the blow up of $r\geq 3$ distinct points $p_i\in\pr2$.
Then, as noted earlier, the divisor classes of $L, E_1,\ldots,E_r$ give an integer basis
for ${\rm Cl}(X)$. This basis is, moreover, orthogonal with respect to the intersection form.
The intersection form is uniquely specified by $L^2=1$ and the fact that
$E_i^2=-1$ for all $i$.

The Weyl group $W(X)$ is a subgroup of orthogonal transformations on ${\rm Cl}(X)$.
It is generated by the elements $s_0,\ldots,s_{r-1}\in W(X)$ where
$s_i(D)=D+(v_i\cdot D)v_i$ and where $v_0=[L-E_1-E_2-E_3]$ and
$v_i=[E_i-E_{i+1}]$ for $0<i<r$.
Note that $W(X)$ really depends only on $r$, not on $X$.
The action of the elements $s_1,\ldots,s_{r-1}$ corresponds to permuting the classes of the divisors
$E_i$, and the action of $s_0$ corresponds to a quadratic Cremona transformation $T$
centered at $p_1, p_2$ and $p_3$. But whereas $T$ is defined only when the points
$p_1, p_2$ and $p_3$ are not collinear, $s_0$ is defined formally and thus it
and every other element of $W(X)$ makes sense
regardless of the positions of the points $p_i$. It is useful to note that
$s_v(D)=D+(v\cdot D)v\in W(X)$ whenever $v=v_{ij}$ for $v_{ij}=[E_i-E_j]$, or
$v=v_{ijk}$ for $v_{ijk}=[L-E_i-E_j-E_k]$ with $i<j<k$. When $v=v_{ijk}$,
$s_v$ corresponds to a quadratic Cremona transformation $T'$ centered at $p_i,p_j,p_k$.
Such a $T'$ can be given explicitly. For example, let $H_{ij},H_{ik},H_{jk}\in R=\KK[x_0,x_1,x_3]$
be linear forms where $H_{ij}$ defines the line through $p_i$ and $p_j$, and likewise
for $H_{ik}$ and $H_{jk}$. Then a specific such $T'$ can be given by defining
$T'(p)$ for any point $p=[a,b,c]$ other than $p_i,p_j$ and $p_k$ by
$T'(p)=[(H_{ij}H_{ik})(a,b,c), (H_{ij}H_{jk})(a,b,c), (H_{ik}H_{jk})(a,b,c)]$,
where, for example, $(H_{ij}H_{ik})(a,b,c)$ means evaluation of the
form $H_{ij}H_{ik}$ at $(a,b,c)$. Moreover, the linear system of sections
of $D=dL-m_1E_1-\cdots-m_rE_r$ can be identified with those of $s_{v_{ijk}}D$,
but where we regard $D$ as being with respect to blowing up the points
$p_1,\cdots,p_r$, under the identification we must regard $s_{v_{ijk}}D$
as being with respect to blowing up the points $q_1,\ldots,q_r$, where $q_l=T'(p_l)$
for $l\not\in\{i,j,k\}$, and where $q_i=(1,0,0)$, $q_j=(0,1,0)$ and $q_k=(0,0,1)$.

If $C=\pi(C')$ is a curve of positive degree, where $C'$ is some
an exceptional curve on $X$, then as long as $r\geq 3$ and the points $p_1,\ldots,p_r$
are sufficiently general, there is in fact \cite{refN} a Cremona transformation $T:\pr2 -\,\!-\!\to\pr2$
whose locus of indeterminacy is contained in the set of points $p_i$
and such that the image of $C'$ is a line. One can find such a $T$
by finding a sequence of elements $u_1=v_{i_1j_1k_1},\ldots, u_l=v_{i_lj_lk_l}$
such that $w=s_{u_1}\cdots s_{u_l}$ where $w\in W(X)$ is an element
such that $w([C'])$ has the form $[L-E_i-E_j]$ for some $i<j$.
Composing the quadratic Cremona transformations corresponding to each $s_{u_i}$
gives the desired transformation $T$. Pulling the parameterization of the line
corresponding to $L-E_i-E_j$ back via $T$ gives a parameterization of $\pi(C')$.

\begin{example}
Suppose we would like to parameterize a quartic $Q$ having
double points at $p_1,p_2$ and $p_3$
and passing through $p_4,\ldots,p_8$ for appropriately general points $p_i\in\pr2$.
The class of the proper transform of $Q$ is $F=[4L-2E_1-2E_2-2E_3-E_4-\cdots-E_8]$.
We choose $u_1=[L-E_i-E_j-E_k]$ so that $u_1\cdot F$ is as small as possible, hence
$u_1=[L-E_1-E_2-E_3]$ and we define $F_1=s_{u_1}(F)=[2L-E_4-\cdots-E_8]$.
There are several choices for $u_2=[L-E_i-E_j-E_k]$ such that $u_2\cdot F_1$
is as small as possible; e.g., $u_2=[L-E_6-E_7-E_8]$,
which gives $F_2=s_{u_2}(F_1)=[L-E_4-E_5]$.
Let $T_1$ be a Cremona transformation corresponding to $s_{u_1}$
(i.e., $T_1$ is a quadratic transformation centered at $p_1,p_2$ and $p_3$)
and let $T_2$ be a Cremona transformation corresponding to
$s_{u_2}$ (i.e., $T_2$ is a quadratic transformation centered at $T_1(p_6),T_1(p_7),T_1(p_8)$).
If we take $T=T_2T_1$, then $T(Q)$ is the line $\Lambda$ through
$q_4=T_2T_1(p_4)$ and $q_5=T_2T_1(p_5)$. If $\varphi$ is a
parameterization of this line (given for example by the pencil of lines through any
point not on the line $\Lambda$), then $T^{-1}\circ\varphi$ parameterizes $Q$.

In general it is easier to compute the successive parameterizations
$T_l^{-1}\circ\varphi$, $T_{l-1}^{-1}\circ T_l^{-1}\circ\varphi$, $\ldots$,
$T^{-1}\circ\varphi=T_1^{-1}\circ\cdots\circ T_l^{-1}\circ\varphi$,
since this involves working in the ring $S=\KK[s,t]$ which has only two variables, rather than
first finding $T^{-1}$ (which would be in terms of elements of
$R=\KK[x_0,x_1,x_2]$) and then composing with $\varphi$.
Also, one should keep in mind that
at each successive parameterization, the component functions may have a common factor
which must be removed. For example, for our parameterization of $Q$ we can take
the pencil of curves composing the
complete linear system $|s_{u_1}s_{u_2}(L-E_1)|$. But
$s_{u_1}s_{u_2}(L-E_1)=3L-2E_1-E_2-E_3-E_6-E_7-E_8$,
so the components of the parameterization of $Q$ are the restrictions to $Q$
of a basis of the cubics singular at $p_1$ and passing through $p_2,p_3,p_6,p_7$ and $p_8$.
But at each stage, $T_i$ is quadratic, so the composition $T^{-1}$ is defined in terms of
homogeneous polynomials of degree $2l$, which in our case would be 4.
The difference is accounted for by the components of $T^{-1}|_\Lambda$ having a common factor, in this case
a common linear factor.
\end{example}

\subsection{Looking for sygyzies coming from the plane}\label{dal piano}
 Here we explain the idea which underlies Conjecture \ref{rptconj}. We
continue with the notation established earlier in this section.
Let $\varphi=(\varphi_0,\varphi_1,\varphi_2)$ be a parameterization
of a rational plane curve of degree $d$, let $J=(\varphi_0,\varphi_1,\varphi_2)\subset S$
be the ideal generated by the components of $\varphi$, and let
$\widetilde \varphi$ be the associated ring map given in \eqref{eqn2}.
Consider a syzygy $(\alpha_0, \alpha_1, \alpha_2)$ of $J$; thus
$\alpha_0\varphi_0+ \alpha_1\varphi_1 + \alpha_2\varphi_2 =0$
for some $\alpha_i\in S_k$ for some $k$.

\begin{defn}\label{from the plane}
We say that the sygyzy $(\alpha_0, \alpha_1, \alpha_2)$
\emph{comes from $\pr 2$} if there are $A_0,A_1,A_2
\in K[\pr 2]_q $, such that $A_0x_0+A_1x_1+A_2x_2=0$, and there exists $p
\in S_{dq-k}$, with $\widetilde \varphi(A_i) = p\alpha_i$.
Hence a sygyzy $(\alpha_0, \alpha_1, \alpha_2)$ comes from $\pr 2$ if and only if
there exists $q\in {\mathbb N}$ such that the following colon ideal is nonzero in degree $dq-k$; i.e.,
$$ [(\varphi_0,\varphi_1,\varphi_2)^q:(\alpha_0,\alpha_1,\alpha_2)]_{dq-k} \neq
\{0\}.$$
\end{defn}

\begin{example}\label{8,3,3} Given the blow up $X$ of $\pr2$ at general points $p_1,\ldots,p_r$,
the first and simplest example of a non-Ascenzi rational curve $C$ with
unbalanced splitting type has class $[C]=[8L-3E_1-3E_2-\dots -3E_7]$.
Indeed, the splitting type of $C$ is $(3,5)$ (see \cite{refFHH} and
\cite{refGHI2}). Notice that $C$ is not exceptional, but
$[8L-3E_1-3E_2-\dots -3E_7-E_8-E_9]$ is the class of an exceptional curve,
and moreover this exceptional curve has the same splitting type as does $C$,
and our discussion applies equally well to this exceptional curve as it does to $C$.

Given that the splitting type is $(3,5)$, Lemma \ref{b1} tells us
that the image $\overline C$ of $C$ in $\pr2$ has a
parameterization $\varphi$ with a syzygy of degree 3. This syzygy
in fact comes from $\pr2$. To see this, note that the linear
system of cubics through 7 general points has a linear syzygy. In
fact, there is a relation $A_0x_0+A_1x_1+A_2x_2=0$, where
$A_0,A_1, A_2$ is a basis for $|3L-E_1-\cdots-E_7|$ regarded as a
subspace of the space of degree 3 forms $|3L|$ on $\pr2$. Moreover
$(3L-E_1-\cdots-E_7)\cdot C=3$, hence the relation on the $A_i$'s
forces a relation of degree $k=3$ among the restrictions
$\varphi_i$ of the $L_i$'s to $C$. Since $\overline C$ has degree
$d=8$, and each $A_i$ has degree $q=3$, we see each
$\widetilde\varphi(A_i)$ has degree $24=dq$. The common factor $p$
is a form on $\pr1$ of degree 21 vanishing on the inverse images
in $\pr1$ of the 7 points $p_i\in\overline C$, and we have
$\deg(p)=21=dq-k$. Even if we did not already know that $a_C=3$,
we would see by Lemma \ref{b1} that $a_C\leq 3$. Since $a_C\geq3$
by Lemma \ref{splitlem}, we recover the fact that $a_C=3$.
\end{example}

If, as is the case in the preceding example,
$a_C$ is smaller than ``expected'' (i.e., smaller than $\lfloor\frac{d}{2}\rfloor$), then the
three forms $\varphi_0,\varphi_1,\varphi_2$ do not give a generic
$g^2_d$, since for three generic forms of degree $d$ over $\pr 1$ the
minimal degree of a syzygy is $\lfloor\frac{d}{2}\rfloor$ by \cite{refAs1}. All known
examples lead us to conjecture that this ``non-generic situation"
occurs only when the presence of a syzygy of smaller than expected degree is
``forced" by something happening in $\pr 2$, i.e. because the syzygy comes from $\pr2$,
and thus there is a linear relation
in $k[\pr 2]$ among polynomials whose restriction to $C$ has degree
$a<\lfloor\frac{d}{2}\rfloor$ (modulo taking away common factors).
Thus Conjecture \ref{rptconj} comes from the expectation that what happened
in Example \ref{8,3,3} should be what always happens when we get unbalanced splitting.
The explicit statement we give for Conjecture \ref{rptconj} includes some additional
restrictions on the $A_i$ (that they be sections of a divisor $A$ of a particular form),
since examples lead us to expect these additional restrictions are always satisfied.

\begin{remark}\label{NBAC} 
Suppose $C$ is an Ascenzi plane curve with non-balanced splitting.
Thus $C$ is of the form $(d,m_1,\dots ,m_r)$ where we may assume $m_1>m_2\geq\dots \geq m_r$,
and we have $d<2m_1-1$. Then Conjecture 2 holds with $A=L-E_1$:
there is a syzygy of degree $a_C=d-m_1$ on the parameterization functions
$\varphi_i$ parameterizing $C$; this syzygy comes from $\pr2$, specifically
from the linear syzygy on the pencil of lines through $p_1$, and we have
$a_C=(dL-m_1E_1-\cdots-m_rE_r)\cdot (L-E_1)$ as asserted by Conjecture \ref{rptconj}.
See also Remark \ref{AEpairsrem}.
\end{remark}

\begin{remark}\label{K_7} In Example \ref{8,3,3} we have $C^2=1$
and $A = -K_7$, where $-K_7=3L-E_1-\cdots-E_7$ is an anticanonical divisor for
the blow up of $\pr2$ at 7 points. In fact,
$h^0(C)=3$ and $|C|$ is a homaloidal net (i.e., $[C]$ is in the Weyl group orbit of $[L]$),
so a smooth irreducible curve $C_2\in |2C|$
is rational, and we have $C_2\cdot A=2a_C$. Thus by Conjecture \ref{rptconj}
we expect $a_{C_2}\leq 2a_C$; i.e., that the splitting gap will be (at least) double for $C_2$, and
this is indeed the case, as we now show. More generally,
consider a smooth irreducible curve  $C_r\in|rC-(r-1)E_8|$. The fact that
the splitting types of $C_r$ are $(3r,5r)$ follows by
applying Lemma \ref{splitlem} together with the forthcoming Proposition \ref{unbalsplitting},
using $A = 3L- E_1- \dots - E_7$ as in Example \ref{AEpairs}.
In the same way we can construct a plethora of similar examples, such as
a curve $C'_4$ of type $(32,12,12,12,12,12,12,12,2,2,2)$ and
a curve $C'_6$ of type $(48,18,18,18,18,18,18,18,18,3,2,2,2,2,2,2,2)$,
whose splitting types are $4(3,5)=(12,20)$ and $6(3,5)=(18,30)$, respectively.
\end{remark}

\section{Splitting type of rational plane curves with specified singularities}\label{sect3}

In this section we will consider the splitting type for rational curves of the form $\pi(D)$,
where $D\subset X$ is a smooth rational curve and $\pi:X\to\pr2$ is
the blowing up of $\pr2$ at $r$ distinct points $p_1,\ldots,p_r$.
Note that if we write $[D]=[d_DL-m_1E_1-\cdots-m_rE_r]$,
there is no loss of generality in assuming that $m_D=m_1$ is the maximum of the $m_i$.

\subsection{New bounds on splitting types}\label{newbnds}
By Lemma \ref{splitlem}, we have the bound $a_D\leq d_D-m_D=A\cdot D$, where
$A=L-E_1$. As Remark \ref{AEpairsrem} will explain, the following proposition
generalizes this bound. It shows that we can sometimes get better
bounds on $a_D$ by finding other divisors $A$ such that
$\mu_A$ has nontrivial kernel.

\begin{prop}\label{unbalsplitting}
Let $X$ be obtained by blowing up $r$ distinct points
of $\pr2$. Let $L, E_1,\ldots,E_r$ be the corresponding basis of the
divisor class group of $X$. Let $D\subset X$ be a smooth rational curve
and let $A$ be a divisor such that $h^1(A)=0$, $H^0(A-D+L)=0$
and ${\rm le}(A)\ge1$. Then $a_D\leq A\cdot D$, and equality
holds if, moreover, $D$ is exceptional such that ${\rm le}(A)={\rm le}(A+D)$ and $\mu_A$ is surjective.
\end{prop}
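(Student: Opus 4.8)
The key idea is to compare the linear syzygies of $A$ (measured by $\operatorname{le}(A)$, the kernel of $\mu_A\colon H^0(A)\otimes H^0(L)\to H^0(A+L)$) with syzygies of the parameterization ideal $J$ of $C=\pi(D)$, exploiting the fact that, by Lemma \ref{b1}, $a_D$ equals the minimal degree of a syzygy of $J$. So the plan is: given a nonzero element of $\ker\mu_A$, produce from it a syzygy of $J$ of degree $A\cdot D$.

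First I would unwind what an element of $\ker\mu_A$ is concretely. Choosing a basis $A_0,A_1,A_2$ of $H^0(L)=\langle x_0,x_1,x_2\rangle$, a nonzero element of $\ker\mu_A$ is a relation $B_0x_0+B_1x_1+B_2x_2=0$ with $B_j\in H^0(A)$ not all zero — i.e., a ``moving line'' coming from $\pr2$ in the sense of Definition \ref{from the plane}. Restricting this relation to $D$ and using the identification $\OO_D\cong\OO_{\pr1}$, the sections $B_j|_D$ live in $H^0(\OO_D(A))=H^0(\OO_{\pr1}(A\cdot D))$ and the restriction $x_j|_D$ is $\varphi_j$ (the $j$-th component of the parameterization). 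Thus $(B_0|_D, B_1|_D, B_2|_D)$ is a syzygy of $J$ of degree $A\cdot D$ — provided it is not identically zero. Nonvanishing of the restriction is exactly where the hypotheses $h^1(A)=0$ and $H^0(A-D+L)=0$ enter: from the exact sequence $0\to\OO_X(A-D)\to\OO_X(A)\to\OO_D(A)\to0$ (and its twist by $L$), the condition $h^1(A)=0$ together with $H^0(A-D)=0$ (which follows from $H^0(A-D+L)=0$ since $|L|$ moves) should force $H^0(A)\hookrightarrow H^0(\OO_D(A))$ to be injective, so that a nonzero element of $\ker\mu_A$ restricts to a nonzero syzygy. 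One must also check the restricted relation is genuinely a syzygy, i.e. that the image $\mu_A$-element restricts to zero in $H^0(\OO_D(A+L))$, which again uses $H^0(A-D+L)=0$. Granting this, Lemma \ref{b1} yields $a_D\le A\cdot D$.

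For the equality statement, assume $D$ is exceptional (so $D^2=-1$, $d_D=-K_X\cdot D$, and $C=\pi(D)$), that $\operatorname{le}(A)=\operatorname{le}(A+D)$, and that $\mu_A$ is surjective. Here I would run the same restriction argument one degree up, comparing syzygies of $J$ in degree $a_D$ with elements of $\ker\mu_A$, and show that the syzygy realizing the minimal degree $a_D$ must itself come from $\pr2$ — i.e. lifts to an element of $\ker\mu_A$ rather than $\ker\mu_{A+D}$ or higher. Concretely: a syzygy of $J$ of degree $a_D$ gives a section of $\OO_D$ in an appropriate twist; the hypothesis $\operatorname{le}(A)=\operatorname{le}(A+D)$ says no new linear syzygies appear when one increases $A$ by $D$, which should pin down that every such syzygy is the restriction of a relation in $H^0(A)$; surjectivity of $\mu_A$ ensures the relevant restriction map on the ``$\mu$'' side is an isomorphism, so the dimension count closes up and forces $a_D\ge A\cdot D$, hence equality. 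I expect to set this up via the commutative diagram relating $\mu_A$, $\mu_{A+D}$, and the restriction sequence for $D$, reading off the needed injectivity/surjectivity from the stated hypotheses.

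The main obstacle, I expect, is the equality half: the inequality $a_D\le A\cdot D$ is a fairly direct ``restrict the syzygy'' argument, but showing equality requires proving that the \emph{minimal-degree} syzygy of $J$ necessarily arises from $\pr2$, which is a converse direction and is exactly the content that the general Conjecture \ref{rptconj} asserts in full. The exceptionality of $D$ and the two extra hypotheses ($\operatorname{le}(A)=\operatorname{le}(A+D)$ and $\mu_A$ surjective) must be used in an essential way to make the cohomological bookkeeping on $X$ match the syzygy bookkeeping on $\pr1$; getting the diagram chase and the dimension count to line up — in particular verifying that no syzygy of degree strictly less than $A\cdot D$ can exist once these hypotheses hold — is the delicate step.
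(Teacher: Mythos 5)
Your proposal follows essentially the same route as the paper: the inequality comes from restricting a nonzero element of ${\rm ker}(\mu_A)$ to $D$ (with $H^0(A-D)=0$, forced by $H^0(A-D+L)=0$, guaranteeing the restricted syzygy is nonzero, and $h^1(A)=0$ making the relevant rows exact), and the equality from the snake lemma applied to the diagram comparing $\mu_A$ and $\mu_{A+D}$ through the restriction sequence for $D$. The one point worth sharpening in your equality step is that the relevant twist is $\OO_D(A+D)$, of degree $A\cdot D+D^2=A\cdot D-1$, whose multiplication-map kernel is forced to vanish because it is a quotient of ${\rm ker}(\mu_{A+D})/{\rm ker}(\mu_A)$ (using ${\rm coker}(\mu_A)=0$ and ${\rm le}(A+D)={\rm le}(A)$), which yields $a_D>A\cdot D-1$ and hence equality.
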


\begin{proof} Since ${\rm le}(A)\ge1$, we must have $h^0(A)>1$,
hence $A\cdot L>0$, so $h^2(A)=0$. Because $h^1(A)=0$ by hypothesis,
by taking cohomology of
$$0 \to \OO_X(A) \to \OO_X(A+L) \to \OO_L(A+L) \to 0 $$
we see that $h^1(A+L)=0$. From the diagram
$$\begin{matrix}
0 & \to & \OO_X(A-D)\otimes H^0(L)  & \to & \OO_X(A)\otimes H^0(L)
& \to & \OO_D(A)\otimes H^0(L) & \to & 0 \cr
{} & {} & \downarrow  & {} & \downarrow & {} & \downarrow & {} & {} \cr
0 & \to & \OO_X(A-D+L)  & \to & \OO_X(A+L) & \to & \OO_D(A+L) & \to & 0 \cr
\end{matrix}$$
we get the following diagram, which has exact rows, by taking cohomology:
$$\begin{matrix}
0 & \to & H^0(A)\otimes H^0(L)  & \to & H^0(\OO_D(A))\otimes H^0(L)
& \to & H^1(A-D)\otimes H^0(L) & \to & 0 \cr
{} & {} & \downarrow \raise3pt\hbox to0in{$\scriptstyle\mu_A$\hss} & {}
& \downarrow\raise3pt\hbox to0in{$\scriptstyle\mu_2$\hss} & {}
& \downarrow\raise3pt\hbox to0in{$\scriptstyle\mu _3$\hss} &
{} & {} \cr
0 & \to & H^0(A+L) & \to & H^0(\OO_D(A+L))
& \to & H^1(A-D+L)& \to & 0 \cr
\end{matrix}$$
Thus we get an inclusion ${\rm ker}(\mu_A)\subseteq {\rm ker}(\mu_2)$ but by
\eqref{eqnstar} we have ${\rm ker}(\mu_2)=
H^0(\OO_D(A\cdot D-a_D))\oplus H^0(\OO_D(A\cdot D-b_D))$.
Since $a_D\leq b_D$, this means $0< h^0(\OO_D(A\cdot D-a_D))$,
hence $a_D\leq A\cdot D$. For the rest, a similar argument applied to
$$\begin{matrix}
0 & \to & \OO_X(A)\otimes H^0(L)  & \to & \OO_X(A+D)\otimes H^0(L)
& \to & \OO_D(A+D)\otimes H^0(L) & \to & 0 \cr
{} & {} & \downarrow  & {} & \downarrow & {} & \downarrow & {} & {} \cr
0 & \to & \OO_X(A+L)  & \to & \OO_X(A+D+L) & \to & \OO_D(A+D+L) & \to & 0 \cr
\end{matrix}$$
shows that
$H^0(\OO_D(A\cdot D-a_D-1))\oplus H^0(\OO_D(A\cdot D-b_D-1))=0$
if ${\rm le}(A)={\rm le}(A+D)$ and $\mu_A$ is surjective, and hence
that $A\cdot D<a_D+1$, which gives $A\cdot D=a_D$.
\end{proof}

The following lemma will be useful in identifying candidate divisors $A$.

\begin{lem}\label{lelemma} Let $X$ be obtained by blowing up $r$ distinct points
of $\pr2$. Let $A=dL-\sum_im_iE_i$ be a divisor on
$X$ such that $d\ge0$, $-K_X\cdot A=2$ and $h^1(A)=0$.
\begin{itemize}
\item[(a)] If $3h^0(A)-h^0(A+L)\ge1$
(in which case we know ${\rm le}(A)\ge1$),
then $A^2+1\geq L\cdot A$.
\item[(b)] If $A^2+1\geq L\cdot A$, then ${\rm le}(A)\ge1$.
\end{itemize}
\end{lem}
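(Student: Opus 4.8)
The plan is to compute everything in terms of the numerics of $A$ on $X$ using Riemann--Roch, and to relate $\mathrm{le}(A)$ to the dimension count $3h^0(A)-h^0(A+L)$ appearing in part (a). The key identities are the ones already set up in the excerpt: $h^0(A)+h^0(A+L)$ and products against $L$ and $-K_X$ can all be expressed via $A^2$, $L\cdot A$ and the vanishing hypotheses. First I would record that $h^1(A)=0$ together with $-K_X\cdot A=2$ and $d=L\cdot A\ge0$ forces $A$ to be effective with $h^0(A)=\chi(\OO_X(A))=1+\frac{A^2-K_X\cdot A}{2}=1+\frac{A^2+(L\cdot A-2)}{2}$ once we also know $h^2(A)=0$; the latter holds because $h^2(X,A)=h^0(K_X-A)$ and $K_X-A$ has negative intersection with $L$ (as $L\cdot A\ge0$ and $L\cdot K_X=-3$, so $L\cdot(K_X-A)\le -3<0$), hence $|K_X-A|=\emptyset$. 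So $h^0(A)$ is pinned down exactly.

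For part (a), the hypothesis $3h^0(A)-h^0(A+L)\ge1$ is exactly the statement $\mathrm{le}(A)\ge1$ whenever $\mu_A$ is surjective, and in general $\dim\ker\mu_A\ge 3h^0(A)-h^0(A+L)$ since $\mu_A:H^0(A)\otimes H^0(L)\to H^0(A+L)$ has source of dimension $3h^0(A)$ (recall $h^0(L)=3$) and target of dimension $h^0(A+L)$; thus the parenthetical remark in the statement is immediate and $3h^0(A)-h^0(A+L)\ge1$ does force $\mathrm{le}(A)\ge1$. The content of (a) is then the inequality $A^2+1\ge L\cdot A$. The plan is to substitute the Riemann--Roch value of $h^0(A)$ and an upper bound for $h^0(A+L)$: since $h^1(X,A+L)=0$ (this is shown in the proof of Proposition \ref{unbalsplitting}, using $h^1(A)=0$ and the restriction sequence $0\to\OO_X(A)\to\OO_X(A+L)\to\OO_L(A+L)\to 0$, noting $\OO_L(A+L)\cong\OO_{\pr1}(L\cdot A+1)$), and $h^2(X,A+L)=0$ by the same $L$-intersection argument, we get $h^0(A+L)=\chi(A+L)=h^0(A)+\chi(\OO_L(A+L))=h^0(A)+(L\cdot A+2)$. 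Plugging $3h^0(A)-h^0(A+L)=2h^0(A)-(L\cdot A+2)\ge1$ and the value $h^0(A)=1+\frac{A^2+L\cdot A-2}{2}$ gives $2+(A^2+L\cdot A-2)-(L\cdot A+2)=A^2-2\ge1$, i.e. $A^2\ge3$, which certainly implies $A^2+1\ge L\cdot A$ provided $L\cdot A\le A^2+1$; but I should double-check the direction — more carefully, $2h^0(A)-L\cdot A-2\ge1$ rearranges directly to $A^2+1\ge L\cdot A$ after substituting $2h^0(A)=A^2+L\cdot A$, so this is essentially a one-line substitution once the two Riemann--Roch computations are in place.

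For part (b), the plan is to run the argument in reverse. Assuming $A^2+1\ge L\cdot A$, I want to conclude $\mathrm{le}(A)\ge1$, i.e. $\dim\ker\mu_A\ge1$. The cleanest route is again the dimension count: it suffices to show $3h^0(A)>h^0(A+L)$, equivalently $2h^0(A)>L\cdot A+2$ using the formula $h^0(A+L)=h^0(A)+L\cdot A+2$. But $2h^0(A)=A^2+L\cdot A$ (from Riemann--Roch with $h^1=h^2=0$), so this is precisely $A^2+L\cdot A>L\cdot A+2$, i.e. $A^2>2$, i.e. $A^2\ge3$ — and from $A^2+1\ge L\cdot A\ge0$ alone this need not follow, so I would need to be more careful here. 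The honest statement of the equivalence is that $3h^0(A)-h^0(A+L)=A^2-2$, so the condition $3h^0(A)-h^0(A+L)\ge1$ in (a) and the condition in (b) are really the same condition $A^2\ge3$ dressed differently; the apparent asymmetry (strict-looking $A^2\ge3$ in one, $A^2+1\ge L\cdot A$ in the other) must be reconciled, and I expect the correct reading is that $A^2+1\ge L\cdot A$ is being used \emph{together with} the standing hypothesis $-K_X\cdot A=2$ — which via adjunction relates $A^2$ and $L\cdot A$ ($-K_X\cdot A=2$ means $3(L\cdot A)-\sum m_i=2$, not directly an $A^2$ statement) — so I would compute $A^2$ in terms of $L\cdot A$ and the $m_i$ and check that $A^2+1\ge L\cdot A$ plus $-K_X\cdot A=2$ yields $A^2\ge3$. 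The main obstacle, then, is not any deep geometry but pinning down this numerical reconciliation: verifying that under $-K_X\cdot A=2$ and $h^1(A)=0$ (so $A$ effective, $d\ge0$), the inequalities $A^2+1\ge L\cdot A$ and $\dim\ker\mu_A\ge1$ are genuinely equivalent, with both $h^0(A)$ and $h^0(A+L)$ forced by Riemann--Roch and the vanishing of $h^1$ and $h^2$. Once that bookkeeping is done, both (a) and (b) are immediate.
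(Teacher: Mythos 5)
Your overall strategy is exactly the paper's: compute $h^0(A)$ by Riemann--Roch using $h^1(A)=h^2(A)=0$, compute $h^0(A+L)=h^0(A)+L\cdot A+2$ from the restriction sequence to $L$, and reduce both parts to the single identity for $3h^0(A)-h^0(A+L)$. But you have mis-evaluated Riemann--Roch, and that one slip is what manufactures the ``asymmetry'' you then spend the rest of the argument trying to reconcile. You wrote $h^0(A)=1+\frac{A^2-K_X\cdot A}{2}=1+\frac{A^2+(L\cdot A-2)}{2}$, which amounts to substituting $-K_X\cdot A=L\cdot A-2$; but the hypothesis is $-K_X\cdot A=2$, so the correct value is $2h^0(A)=A^2-K_X\cdot A+2=A^2+4$, not $A^2+L\cdot A$. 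With the correct value,
$$3h^0(A)-h^0(A+L)=2h^0(A)-L\cdot A-2=A^2+2-L\cdot A,$$
and both parts are immediate: (a) says $A^2+2-L\cdot A\ge1$, i.e.\ $A^2+1\ge L\cdot A$, and (b) is the same inequality read backwards via ${\rm le}(A)\ge 3h^0(A)-h^0(A+L)$. There is no asymmetry to reconcile and no role for $A^2\ge3$.

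Concretely, your claim that $3h^0(A)-h^0(A+L)=A^2-2$ and your proposed repair --- that $A^2+1\ge L\cdot A$ together with $-K_X\cdot A=2$ should force $A^2\ge3$ --- are both false. Take $A=L-E_1$: then $-K_X\cdot A=2$, $A^2=0$, $L\cdot A=1$, so $A^2+1\ge L\cdot A$ holds with equality, and indeed $h^0(A)=2$, $h^0(A+L)=5$, so $3h^0(A)-h^0(A+L)=1$ and ${\rm le}(A)\ge1$ (the pencil of lines through $p_1$ has its obvious linear syzygy) --- yet $A^2=0$. So the route you sketch for part (b) would fail on this example. Everything else in your write-up (the vanishing of $h^2$ via $L\cdot(K_X-A)<0$, the computation of $h^0(A+L)$, and the rank-nullity bound $\dim\ker\mu_A\ge 3h^0(A)-h^0(A+L)$) is correct and matches the paper; only the Riemann--Roch substitution needs to be fixed, after which the proof closes in one line.
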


\begin{proof}
>From $h^1(A)=0$, $A\cdot L=d \ge 0$ and
$$0 \to \OO_X(A) \to \OO_X(A+L) \to \OO_L(A+L) \to 0 $$
we see that $h^0(A+L)=h^0(A)+A\cdot L+2$. Since $d\ge0$, we have $h^2(A)=0$.
Since also $h^1(A)=0$ and $-A\cdot K_X=2$,
we have $2h^0(A)=A^2-A\cdot K+2=A^2+4$ by Riemann-Roch,
which says $h^0(A)-h^1(A)+h^2(A)=(A^2-K_X\cdot A)/2+1$.
(a) This is just a calculation:
$1\le3h^0(A)-h^0(A+L)=2h^0(A)-A\cdot L-2
=A^2+2-A\cdot L$, which gives the result.

(b) This time we have ${\rm le}(A)\geq 3h^0(A)-h^0(A+L)=2h^0(A)-A\cdot L-2
=A^2+2-A\cdot L\ge1$.
\end{proof}

\begin{remark}\label{AEpairsrem}
Assume $X$ is obtained by blowing up $r\ge1$ distinct points $p_i$ of $\pr2$,
and consider a smooth rational curve $D\subset X$ with $[D]=[d_DL-m_1E_1-\cdots-m_rE_r]$.
Assume that $m_D=m_1$
is the maximum of the $m_i$. Now let $A=L-E_1$. By Lemma \ref{lelemma}(b), we have ${\rm le}(A)\geq1$, although it is of course obvious in this case that
$A$ has a nontrivial linear syzygy and it is not hard to check that in fact
${\rm le}(A)=1$. If $d_D>2$
we have $(A-D+L)\cdot L<0$ and hence $h^0(A-D+L)=0$ so by Proposition \ref{unbalsplitting}
we obtain $a_D\leq (L-E_1)\cdot D=d_D-m_D$, which
is just the upper bound given in Lemma \ref{splitlem}.
\end{remark}

\begin{example}\label{AEpairs}
Here we assume $X$ is obtained by blowing up $r=9$ generic points $p_i$ of $\pr2$.
(We pick $r=9$ to have a single value of $r$ big enough to accommodate
the discussion in this example.)
Two non-Ascenzi types of plane rational curves with generically situated singularities
were previously known to have unbalanced splitting (both with gap 2), namely
$(8,3,3,3,3,3,3,3,0,0)$ \cite[Lemma 3.12(b)(ii)]{refFHH} and
$(12, 5, 5, 5, 4, 4, 4, 4, 2,0)$ \cite[\S A2.1]{refGHI1}.
We show how our results recover the splittings in these cases.
We focus on the first case (the
second case can be done exactly the same way).
First consider $A=3L-E_1-\cdots-E_7$.
Since the points $p_i$ are not special, it's clear that
$h^1(A)=0$, so Lemma \ref{lelemma}(b) implies that ${\rm le}(A)\geq1$, but
by \cite[Theorem IV.1]{refH2} we know $\mu_A$ is surjective. Since
$h^0(A)=3$ and $h^0(A+L)=8$, we see in fact ${\rm le}(A)=1$.
Since $[8L-3(E_1+\cdots+E_7)]$ is in the $W(X)$-orbit of $[L]$,
we know (see \cite{refN}) that there is a
smooth rational curve whose class is $[8L-3(E_1+\cdots+E_7)]$;
let $C$ be any such curve.
We have $h^0(A-C+L)=0$ since $(A-C+L)\cdot L<0$.
Thus $a_C\leq A\cdot C=3$ by Proposition \ref{unbalsplitting}.
>From Ascenzi's lower bound $a_C\geq m_C=3$ we see that we actually have $a_C=3$ here.
(If we instead consider the exceptional curve $E$ whose class is
$[8L-3(E_1+\cdots+E_7)-E_8-E_9]$, then the same argument shows
that $a_E=3$, but moreover it is also true that ${\rm le}(A+E)=1$,
and thus $a_E=3$ would follow from
Proposition \ref{unbalsplitting} alone, but the simplest argument to show ${\rm le}(A+E)=1$
involves using the fact that $a_E=3$.)
\end{example}

\begin{example}\label{MoreAEpairs}
Again let $X$ be the blow up of $\pr2$ at $r$ generic points.
Here we determine the splitting for several non-Ascenzi exceptional curves $E$ using
the same method as in the previous example but with different choices for $A$.
First assume $[E]=[12L -5(E_1+\cdots+E_4)-3(E_5+\cdots+E_9)]$
but this time take $A=5L-2(E_1+\cdots+E_4)-(E_5+\cdots+E_9)$.
It's clear that $A$ is effective and nef (since $A=D-K_X$ for $D=2L-E_1-\cdots-E_4$ and both
$D$ and $-K_X$ are effective and nef), and since $-K_X\cdot A=2$ it follows from
\cite{refH} that $h^1(A)=0$. Thus Lemma \ref{lelemma}(b) implies that ${\rm le}(A)\geq1$.
As before, we have $(A-E+L)\cdot L<0$ so $h^0(A-E+L)=0$.
Thus $a_E\leq A\cdot E=5$ by Proposition \ref{unbalsplitting} and using Ascenzi's
lower bound we again have equality, $a_E=5$.

Here are a few additional pairs which work the same way. For simplicity we give only the numerical types
corresponding to $A$ and $E$. In each case we obtain $a_E=m_E$:
$$\begin{array}{llll}
A:& (7, 3, 3, 3, 2, 2, 2, 1, 1, 1, 1) &E:& (18, 8, 8, 8, 6, 6, 5, 3, 3, 3, 3)\\
A:& (9, 4, 3, 3, 3, 3, 3, 2, 2, 2)      &E:& (20, 9, 7, 7, 7, 7, 7, 5, 5, 5)\\
A:& (7, 3, 3, 2, 2, 2, 2, 2, 2, 1)      &E:& (20, 9, 9, 7, 6, 6, 6, 6, 6, 3, 1)\\
A:& (7, 3, 3, 3, 3, 1, 1, 1, 1, 1, 1, 1) &E:& (20, 9, 9, 9, 9, 4, 4, 3, 3, 3, 3, 3)
\end{array}$$
It is not always so easy to determine $a_E$ exactly.
For example, if $E$ is an exceptional curve of type $(40, 15, 15, 15, 13, 13, 13, 13, 13, 9)$,
then $A=(E+K_X+L)/2$ has type $(19, 7, 7, 7, 6, 6, 6, 6, 6, 4)$,
and $h^1(A)=0$ by the methods of \cite{refH}, while
$h^0(A-E+L)=0$ since $(A-E+L)\cdot L<0$.
Applying Lemma \ref{lelemma}(b)
we have ${\rm le}(A)\geq1$, but Proposition \ref{unbalsplitting}
and Ascenzi's bounds give only $15\leq a_E\leq A\cdot E=19$. Computer
calculations indicate that in fact $a_E=19$, as predicted by Conjecture \ref{9ptconj}.
\end{example}

Each $A$ in Example \ref{MoreAEpairs} has $-K_X\cdot A=2$. For reasons that so far remain mysterious,
when an exceptional curve $E$ has unbalanced splitting
it always seems possible to find an $A$ such that not only do we have ${\rm le}(A)=1$
and $E\cdot A=a_E$, but in addition such that $-K_X\cdot A=2$.

\subsection{Smooth rational curves on 7 point blow ups}\label{7pts}

Here we classify all classes $[C]=[dL-m_1E_1-\cdots-m_rE_r]$ where
$C$ is a smooth rational curve on the blow up $X$ of $\pr2$ at $r\leq7$ generic points.
Since the case $r=7$ subsumes $r<7$, we will assume $r=7$. As we will see,
$r=7$ is the least $r$ such that there are infinitely many non-Ascenzi $C$; moreover,
all but finitely many of these are unbalanced. The method we use here can be used to
find all Ascenzi and all non-Ascenzi $C$ when $r=8$, but there will be many more cases
to analyze if one wants also to determine the splitting types.
For $r\leq8$, the Weyl group $W(X)$ is finite. The case $r>8$ will be more difficult,
at least partly due to the fact that $W(X)$ is then infinite.

In the next result, we show that the class of every smooth rational curve $C$
on the blow up $X$ of $\pr2$ at 7 generic points is in the Weyl group orbit either
of $E_7$, $H_0+kH_1$,  $H_2+kH_1$, $2H_0$ or of $H_1$,
where $H_0=L$, $H_1=L-E_1$ and $H_2=2L-E_1-E_2$, and $k\in \bf N$.

\begin{thm}\label{list7} Let $X$ be the blow up of $\pr2$ at $r\leq7$ generic points. The numerical types
$(d,m_1,\ldots,m_7)$ of all smooth rational $C\subset X$, up to permutations of the $m_i$'s, are given in the following lists, where the corresponding splitting gap $\gamma _C$ in each case which is not Ascenzi is given.

The types for the orbit of $E_7$ are
$(0$, $0$, $0$, $0$, $0$, $0$, $0$, $-1)$,
$(1$, $1$, $1$, $0$, $0$, $0$, $0$, $0)$,
$(2$, $1$, $1$, $1$, $1$, $1$, $0$, $0)$ and
$(3$, $2$, $1$, $1$, $1$, $1$, $1$, $1)$, all of which are Ascenzi.

The types for the orbit of $H_0+kH_1$ are (here $k\in \bf N$):\newline
\hbox to.5in{\hfil} $(1, 0, 0, 0, 0, 0, 0, 0)+k(1, 1, 0, 0, 0, 0, 0, 0)$, which is Ascenzi\newline
\hbox to.5in{\hfil} $(2, 1, 1, 1, 0, 0, 0, 0)+k(1, 1, 0, 0, 0, 0, 0, 0)$, which is Ascenzi\newline
\hbox to.5in{\hfil} $(2, 1, 1, 1, 0, 0, 0, 0)+k(2, 1, 1, 1, 1, 0, 0, 0)$, which is Ascenzi\newline
\hbox to.5in{\hfil} $(3, 2, 1, 1, 1, 1, 0, 0)+k(1, 1, 0, 0, 0, 0, 0, 0)$, which is Ascenzi\newline
\hbox to.5in{\hfil} $(3, 2, 1, 1, 1, 1, 0, 0)+k(2, 1, 1, 1, 1, 0, 0, 0)$, which is Ascenzi\newline
\hbox to.5in{\hfil} $(3, 2, 1, 1, 1, 1, 0, 0)+k(3, 2, 1, 1, 1, 1, 1, 0)$, which is Ascenzi\newline
\hbox to.5in{\hfil} $(4, 2, 2, 2, 1, 1, 1, 0)+k(2, 1, 1, 1, 1, 0, 0, 0)$, which is Ascenzi\newline
\hbox to.5in{\hfil} $(4, 2, 2, 2, 1, 1, 1, 0)+k(3, 2, 1, 1, 1, 1, 1, 0)$, which is Ascenzi\newline
\hbox to.5in{\hfil} $(4, 2, 2, 2, 1, 1, 1, 0)+k(4, 2, 2, 2, 1, 1, 1, 1)$, which is Ascenzi\newline
\hbox to.5in{\hfil} $(4, 3, 1, 1, 1, 1, 1, 1)+k(3, 2, 1, 1, 1, 1, 1, 0)$, which is Ascenzi\newline
\hbox to.5in{\hfil} $(5, 2, 2, 2, 2, 2, 2, 0)+k(3, 2, 1, 1, 1, 1, 1, 0)$, which is Ascenzi\newline
\hbox to.5in{\hfil} $(5, 2, 2, 2, 2, 2, 2, 0)+k(5, 2, 2, 2, 2, 2, 2, 1)$, which is Ascenzi if and only if $k = 0$;
$\gamma_C=|k-1|$\newline
\hbox to.5in{\hfil} $(5, 3, 2, 2, 2, 1, 1, 1)+k(2, 1, 1, 1, 1, 0, 0, 0)$, which is Ascenzi\newline
\hbox to.5in{\hfil} $(6, 3, 3, 2, 2, 2, 2, 1)+k(3, 2, 1, 1, 1, 1, 1, 0)$, which is Ascenzi\newline
\hbox to.5in{\hfil} $(6, 3, 3, 2, 2, 2, 2, 1)+k(4, 2, 2, 2, 1, 1, 1, 1)$, which is Ascenzi\newline
\hbox to.5in{\hfil} $(6, 3, 3, 2, 2, 2, 2, 1)+k(5, 2, 2, 2, 2, 2, 2, 1)$, which is Ascenzi if and only if $k < 2$;
$\gamma_C=k$\newline
\hbox to.5in{\hfil} $(7, 3, 3, 3, 3, 2, 2, 2)+k(4, 2, 2, 2, 1, 1, 1, 1)$, which is Ascenzi\newline
\hbox to.5in{\hfil} $(7, 3, 3, 3, 3, 2, 2, 2)+k(5, 2, 2, 2, 2, 2, 2, 1)$, which is Ascenzi if and only if $k = 0$;
$\gamma_C=k+1$\newline
\hbox to.5in{\hfil} $(8, 3, 3, 3, 3, 3, 3, 3)+k(5, 2, 2, 2, 2, 2, 2, 1)$, which is never Ascenzi;
$\gamma_C=k+2$\newline

The types for the orbit of $H_2+kH_1$ are:\newline
\hbox to.5in{\hfil} $( 2, 1, 1, 0, 0, 0, 0, 0)+k(1, 1, 0, 0, 0, 0, 0, 0)$, which is Ascenzi\newline
\hbox to.5in{\hfil} $( 3, 2, 1, 1, 1, 0, 0, 0)+k(1, 1, 0, 0, 0, 0, 0, 0)$, which is Ascenzi\newline
\hbox to.5in{\hfil} $( 3, 2, 1, 1, 1, 0, 0, 0)+k(2, 1, 1, 1, 1, 0, 0, 0)$, which is Ascenzi\newline
\hbox to.5in{\hfil} $( 4, 2, 2, 2, 1, 1, 0, 0)+k(2, 1, 1, 1, 1, 0, 0, 0)$, which is Ascenzi\newline
\hbox to.5in{\hfil} $( 4, 3, 1, 1, 1, 1, 1, 0)+k(1, 1, 0, 0, 0, 0, 0, 0)$, which is Ascenzi\newline
\hbox to.5in{\hfil} $( 4, 3, 1, 1, 1, 1, 1, 0)+k(3, 2, 1, 1, 1, 1, 1, 0)$, which is Ascenzi\newline
\hbox to.5in{\hfil} $( 5, 3, 2, 2, 2, 1, 1, 0)+k(2, 1, 1, 1, 1, 0, 0, 0)$, which is Ascenzi\newline
\hbox to.5in{\hfil} $( 5, 3, 2, 2, 2, 1, 1, 0)+k(3, 2, 1, 1, 1, 1, 1, 0)$, which is Ascenzi\newline
\hbox to.5in{\hfil} $( 6, 3, 3, 3, 2, 1, 1, 1)+k(2, 1, 1, 1, 1, 0, 0, 0)$, which is Ascenzi\newline
\hbox to.5in{\hfil} $( 6, 3, 3, 3, 2, 1, 1, 1)+k(4, 2, 2, 2, 1, 1, 1, 1)$, which is Ascenzi\newline
\hbox to.5in{\hfil} $( 6, 4, 2, 2, 2, 2, 1, 1)+k(3, 2, 1, 1, 1, 1, 1, 0)$, which is Ascenzi\newline
\hbox to.5in{\hfil} $( 6, 3, 3, 2, 2, 2, 2, 0)+k(3, 2, 1, 1, 1, 1, 1, 0)$, which is Ascenzi\newline
\hbox to.5in{\hfil} $( 7, 4, 3, 3, 2, 2, 2, 1)+k(3, 2, 1, 1, 1, 1, 1, 0)$, which is Ascenzi\newline
\hbox to.5in{\hfil} $( 7, 4, 3, 3, 2, 2, 2, 1)+k(4, 2, 2, 2, 1, 1, 1, 1)$, which is Ascenzi\newline
\hbox to.5in{\hfil} $( 8, 4, 4, 3, 3, 2, 2, 2)+k(4, 2, 2, 2, 1, 1, 1, 1)$, which is Ascenzi\newline
\hbox to.5in{\hfil} $( 8, 4, 3, 3, 3, 3, 3, 1)+k(5, 2, 2, 2, 2, 2, 2, 1)$, which is Ascenzi if and only if $k < 2$; $\gamma_C=k$\newline
\hbox to.5in{\hfil} $( 9, 4, 4, 4, 3, 3, 3, 2)+k(4, 2, 2, 2, 1, 1, 1, 1)$, which is Ascenzi\newline
\hbox to.5in{\hfil} $( 9, 4, 4, 4, 3, 3, 3, 2)+k(5, 2, 2, 2, 2, 2, 2, 1)$, which is Ascenzi if and only if $k = 0$; $\gamma_C=k+1$\newline
\hbox to.5in{\hfil} $(10, 4, 4, 4, 4, 4, 3, 3)+k(5, 2, 2, 2, 2, 2, 2, 1)$, which is never Ascenzi; $\gamma_C=k+2$\newline

The types for the orbit of $2H_0$ are:\newline
\hbox to.5in{\hfil} $( 2, 0, 0, 0, 0, 0, 0, 0)$, which is Ascenzi \newline
\hbox to.5in{\hfil} $( 4, 2, 2, 2, 0, 0, 0, 0)$, which is Ascenzi \newline
\hbox to.5in{\hfil} $( 6, 4, 2, 2, 2, 2, 0, 0)$, which is Ascenzi \newline
\hbox to.5in{\hfil} $( 8, 4, 4, 4, 2, 2, 2, 0)$, which is Ascenzi \newline
\hbox to.5in{\hfil} $( 8, 6, 2, 2, 2, 2, 2, 2)$, which is Ascenzi \newline
\hbox to.5in{\hfil} $(10, 6, 4, 4, 4, 2, 2, 2)$, which is Ascenzi \newline
\hbox to.5in{\hfil} $(10, 4, 4, 4, 4, 4, 4, 0)$, which is not Ascenzi; $\gamma_C=0$ \newline
\hbox to.5in{\hfil} $(12, 6, 6, 4, 4, 4, 4, 2)$, which is Ascenzi \newline
\hbox to.5in{\hfil} $(14, 6, 6, 6, 6, 4, 4, 4)$, which is not Ascenzi; $\gamma_C=2$ \newline
\hbox to.5in{\hfil} $(16, 6, 6, 6, 6, 6, 6, 6)$, which is not Ascenzi; $\gamma_C=4$ \newline

The types for the orbit of $H_1$ are: \newline
\hbox to.5in{\hfil} $(1, 1, 0, 0, 0, 0, 0, 0)$, which is Ascenzi \newline
\hbox to.5in{\hfil} $(2, 1, 1, 1, 1, 0, 0, 0)$, which is Ascenzi \newline
\hbox to.5in{\hfil} $(3, 2, 1, 1, 1, 1, 1, 0)$, which is Ascenzi \newline
\hbox to.5in{\hfil} $(4, 2, 2, 2, 1, 1, 1, 1)$, which is Ascenzi \newline
\hbox to.5in{\hfil} $(5, 2, 2, 2, 2, 2, 2, 1)$, which is Ascenzi \newline
\end{thm}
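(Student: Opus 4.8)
The plan is to establish Theorem \ref{list7} in two stages: first classify the Weyl group orbits that contain a class of a smooth rational curve on $X$ (the blow up of $7$ generic points), then for each orbit determine the splitting gap of the representatives, using the machinery of \S\ref{newbnds} in the non-Ascenzi cases and Lemma \ref{splitlem} in the Ascenzi cases.

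\medskip

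\textbf{Step 1: Reduce to Weyl orbits.} Since the points $p_i$ are generic, a class $[C]=[dL-\sum m_iE_i]$ is the class of a smooth rational curve precisely when $C^2\ge -1$ and $K_X\cdot C=-C^2-2$ (so $C^2\in\{2g-2-K_X\cdot C\}$ with $g=0$), together with the condition that $|C|$ contains an irreducible curve; for $r\le 7$, $-K_X$ is ample and $W(X)$ is the finite Weyl group of type $E_7$, and the standard theory (Nagata, cf.\ \cite{refN} and the discussion in \S\ref{params}) shows that every such class is $W(X)$-equivalent to one of the finitely many ``fundamental'' effective classes with small self-intersection: the exceptional class $E_7$, the line-related classes $H_1=L-E_1$ and $H_0=L$ (more precisely $2H_0$, the conic), and the two ``pencil-plus-line'' families $H_0+dH_1$ and $H_2+dH_1$ with $H_2=2L-E_1-E_2$. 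The first concrete task is to verify that these five orbits exhaust the possibilities. I would do this by a finite computation: enumerate $W(X)$-orbit representatives of classes $[C]$ with $-K_X\cdot C$ bounded (it suffices to check $-K_X\cdot C\le$ some small bound because $-K_X$ is ample and $C^2\ge -1$ forces $-K_X\cdot C = C^2+2\ge 1$, while within a fixed orbit one can always reduce $d$), checking at each stage which classes reduce under the reflections $s_{v_{ij}},s_{v_{ijk}}$ to one with strictly smaller degree, and collecting the irreducible ``sinks''. Then produce, for each orbit, the explicit list of numerical types with $0\le m_1\le\cdots\le m_7$ reordered — this is the bulk of the displayed lists and is purely mechanical orbit enumeration.

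\medskip

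\textbf{Step 2: Splitting gaps.} For the Ascenzi types (those with $d\le 2m_1+1$), Lemma \ref{splitlem} immediately gives $\gamma_C=|2m_1-d|$, so nothing further is needed; one just checks the Ascenzi inequality for each listed type. The non-Ascenzi cases are the interesting ones, and they all lie in the tails of the families $H_0+dH_1$, $H_2+dH_1$ and $2H_0$. Here the strategy is: (i) apply Lemma \ref{splitlem} for the \emph{lower} bound $a_C\ge m_C$; (ii) apply Proposition \ref{unbalsplitting} with a well-chosen divisor $A$ for the \emph{upper} bound $a_C\le A\cdot C$, where $A$ is located via Lemma \ref{lelemma} (one wants $-K_X\cdot A=2$, $h^1(A)=0$, $A^2+1\ge L\cdot A$, and $H^0(A-C+L)=0$). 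For instance, for the family $(8,3,3,3,3,3,3,3)+d(5,2,2,2,2,2,2,1)$ one takes $A=3L-E_1-\cdots-E_7$ as in Example \ref{AEpairs} (whose restriction to $C$ has degree growing with $d$ in a controlled way), and similarly for the orbit of $2H_0$ one uses the $A=(C+K_X+L)/2$ prescription of Theorem \ref{thm2}/Conjecture \ref{9ptconj} restricted to these small cases. When Proposition \ref{unbalsplitting}'s equality clause applies (e.g.\ when $C$ is exceptional, $\mu_A$ surjective, ${\rm le}(A)={\rm le}(A+C)$), one gets $a_C=A\cdot C$ on the nose; otherwise one must squeeze the lower and upper bounds together, which works here because in each tail the Ascenzi bound $a_C\ge m_C$ and the bound $A\cdot C$ coincide.

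\medskip

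\textbf{The main obstacle} I expect is twofold. First, the orbit enumeration in Step 1 must be \emph{complete}: one has to be sure no orbit of smooth rational curves is missed and that the ``reduction to a sink'' argument genuinely terminates with exactly these five classes — this is a finite but delicate bookkeeping problem in the $E_7$ root system, and care is needed to show that a class failing to reduce really is the class of an irreducible curve (not just effective). Second, in Step 2 the verification that the chosen $A$ satisfies $h^1(A)=0$ and, where needed, that $\mu_A$ is surjective (hence the equality clause of Proposition \ref{unbalsplitting} applies) requires invoking cohomological results such as those of \cite{refH,refH2} for each family and checking the numerical hypotheses $H^0(A-C+L)=0$ via $(A-C+L)\cdot L<0$. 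Once those two points are handled, assembling the table is routine. For the few types where equality in Proposition \ref{unbalsplitting} is not transparent (the non-exceptional members of the $H_0+dH_1$ and $H_2+dH_1$ tails), the fallback is to pin down $a_C$ by the moving-line matrix rank computation of Theorem \ref{splittingM} applied to an explicit parameterization obtained by the Cremona-reduction procedure of \S\ref{params}, which reduces each such curve to a line in finitely many quadratic steps.
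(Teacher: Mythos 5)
Your outline follows the paper's broad strategy (Weyl-orbit classification plus the bounds $m_C\leq a_C\leq A\cdot C$ from Lemma \ref{splitlem} and Proposition \ref{unbalsplitting} with $A=3L-E_1-\cdots-E_7$), but two essential steps are missing or would fail as proposed.

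First, your Step 1 reduction to ``a finite computation by bounding $-K_X\cdot C$'' does not work. By adjunction $-K_X\cdot C=C^2+2$, and this quantity is a $W(X)$-invariant which is \emph{unbounded} over the classes in question: for instance $-K_X\cdot(H_0+dH_1)=3+2d$. So there are infinitely many orbits, no bound on $-K_X\cdot C$ captures them all, and ``reducing $d$ within an orbit'' cannot help since the invariant does not change. The paper's actual argument is the piece you are missing: for $C^2\geq0$ the class is nef, hence $W(X)$-equivalent to a non-negative combination $D=\sum a_iH_i$ of the fundamental classes $H_0=L$, $H_1=L-E_1$, $H_2=2L-E_1-E_2$, $H_i=3L-E_1-\cdots-E_i$ ($3\leq i\leq7$), and then the single identity $D^2=-2-K_X\cdot D$ combined with inequalities of the form $-D\cdot K_X\leq D\cdot H_j\leq D^2$ eliminates every combination except $H_0+dH_1$, $H_2+dH_1$, $2H_0$ and $H_1$. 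Without some such argument your classification is not proved, only asserted.

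Second, in Step 2 your plan to ``squeeze the bounds together'' covers most cases but fails exactly where the paper has to work hardest: (i) for $(10,4,4,4,4,4,4,0)$ one has $m_C=4$ and $A\cdot C=6>\lfloor d_C/2\rfloor$, so neither bound decides whether $a_C=4$ or $5$; the paper proves $a_C=5$ (balanced) by showing $H^0(C)\otimes H^0(L)\to H^0(C+L)$ is surjective and chasing the snake lemma. (Your suggested $A=(C+K_X+L)/2$ is not even an integral class here.) (ii) For the family $(5,2,2,2,2,2,2,0)+d(5,2,2,2,2,2,2,1)$ with $d\geq2$ the bounds give only $2+2d\leq a_C\leq3+2d$; the paper settles $a_C=3+2d$ by an inductive cohomology argument on $h^1(F-dG)$ and $h^1(L+F-dG)$. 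Your fallback --- a moving-line rank computation on an explicit parameterization --- is a calculation at chosen points and for one value of $d$ at a time; it cannot by itself establish the result for generic points across an infinite family, and for case (i) it would at best certify an upper bound on the gap at special points, leaving the generic statement to a semicontinuity argument you have not supplied.
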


\begin{proof} Let $C$ be a smooth rational curve on $X$.
Because $r=7$, $X$ is a Del Pezzo surface, and hence $-K_X$ is ample. Thus by adjunction
we have $C^2=-2-C\cdot K_X \geq -1$.
In addition, $W(X)$ is finite (of order $2^{10}3^{4}5^17$; see \cite[26.6]{refM}), and so is the set of
classes $[C]$ of rational curves $C$ with $C^2=-1$ (i.e., the classes of exceptional curves).
In fact, there are 56 of them (giving the 4 classes listed up to permutations
in the statement of the theorem),and their classes are precisely
the orbit of $E_7$ (see \cite[Proposition 26.1, Theorem 26.2(iii)]{refM}).
We note that these all are Ascenzi.

Now say $C^2>-1$; then $C$ is nef, hence there is an element $w\in W(X)$ such that $D=w[C]$
is a non-negative integer linear combination of the classes of
$H_0=L$, $H_1=L-E_1$, $H_2=2L-E_1-E_2$, and $H_i=3L-E_1-\cdots-E_i$ for
$3\leq i\leq 7$ \cite[Lemma 1.4, Corollary 3.2]{refH}.
Note that $H_7=-K_X$.

Write $D=[\sum_ia_iH_i]$. If $a_j>0$ for some $j\geq 3$, then we have
$-D\cdot K_X\leq D\cdot H_j \leq D\cdot\sum_ia_iH_i=D^2$, which violates
$D^2=-2-D\cdot K_X$. Thus $D=[a_0H_0+a_1H_1+a_2H_2]$.
If $a_0$ and $a_2$ are both positive, then
we get another violation, $-D\cdot K_X\leq D\cdot (H_0+H_2) \leq D^2$,
so either $a_0=0$ or $a_2=0$. If $a_0=0$, we cannot have $a_2\geq 2$, since
then $D^2=a_1a_2+a_2(2a_2+a_1)\geq 2(2a_2+a_1)=-D\cdot K_X$.
Likewise, if $a_2=0$, we cannot have $a_0>2$ nor can we have $a_0\geq2$ if $a_1\geq1$.
All that is left are the classes of $H_0+kH_1$, $2H_0$, $H_1$ and $H_2+kH_1$ for $k\geq0$,
all of which it is easy to see are classes of smooth rational curves. For example,
$H_0+kH_1$ corresponds to a plane curve of degree $k+1$ with a singular point of multiplicity $k$.
To find the numerical types $(d_C,m_1,\ldots,m_7)$ of all smooth rational $C$, it is now enough to
compute the orbit of each $D$ under $W(X)$, as we have done to produce the lists in the
statement of the theorem.

We now determine the splitting gaps for the non-Asscenzi cases.
First consider the curve $C$ of type $(10,4,4,4,4,4,4)$.
Since $a_C\leq d_C/2 =5$, it suffices to show that $a_C>4$ to prove that the gap is 0.
By twisting \eqref{eqnstar} by $\OO_C(C)$, we obtain an exact sequence
$$0\to \OO_C(4-a_C)\oplus\OO_C(4-b_C)\to \OO_C(C)\otimes H^0(L) \to \OO_C(C+L) \to0.$$
To show $a_C>4$ it now suffices to show this is exact on global sections,
since $h^0( \OO_C(C)\otimes H^0(L)) = 15 = h^0( \OO_C(C+L))$.
But exactness follows from
the fact that $H^0(C)\otimes H^0(L)\to H^0(C+L)$ is surjective (see \cite{refH2})
by taking global sections of the following diagram
$$\begin{matrix}
0 & \to & \OO_X\otimes H^0(L)  & \to & \OO_X(C)\otimes H^0(L)
& \to & \OO_C(C)\otimes H^0(L) & \to & 0 \cr
{} & {} & \downarrow  & {} & \downarrow & {} & \downarrow & {} & {} \cr
0 & \to & \OO_X(L)  & \to & \OO_X(C+L) & \to & \OO_C(C+L) & \to & 0 \cr
\end{matrix}$$
and applying the snake lemma.

We now find $a_C$ for the remaining non-Ascenzi cases.
By applying Lemma \ref{splitlem}, and Proposition \ref{unbalsplitting} with $A=3L-E_1-\cdots-E_7$,
we have $m_C\leq a_C\leq C\cdot A$, and except for
$(5, 2, 2, 2, 2, 2, 2, 0) + k(5, 2, 2, 2, 2, 2, 2, 1)$, in each case we have
$m_C=C\cdot A$, so $a_C=m_C$.

Finally, we consider the case $(5, 2, 2, 2, 2, 2, 2, 0) + k(5, 2, 2, 2, 2, 2, 2, 1)$ for $k>0$.
The preceding argument shows only that $2+2k\leq a_C\leq 3+2k$, but in fact,
$a_C=3+2k$ for $k>0$ and $a_C=2$ for $k=0$ (hence the splitting gap is $|k-1|$).
Certainly $a_C=5$ if $k=1$, since a curve of type $(10, 4, 4, 4, 4, 4, 4, 1)$
is a proper transform of, but isomorphic to, a curve of type $(10, 4, 4, 4, 4, 4, 4, 0)$
and thus has the same splitting type.

To see $a_C=3+2k$ when $k>1$, let $F$ and $G$ be smooth rational curves with
$[F]=[5L-2(E_1+\cdots+E_6)]$ and let $[G]=[5L-2(E_1+\cdots+E_6)-E_7]$.
Thus $[C]=[F+kG]$; note also that $2F-C=F-kG$. Taking cohomology of the diagram
$$\begin{matrix}
0 & \to & \OO_X(2F-C)\otimes H^0(L)  & \to & \OO_X(2F)\otimes H^0(L)
& \to & \OO_C(2k+2)\otimes H^0(L) & \to & 0 \cr
{} & {} & \downarrow  & {} & \downarrow & {} & \downarrow & {} & {} \cr
0 & \to & \OO_X(L+2F-C)  & \to & \OO_X(2F+L) & \to & \OO_C(7k+7) & \to & 0 \cr
\end{matrix}$$
gives the following commutative diagram:
$$\begin{matrix}
0 & \to & H^0(2F)\otimes H^0(L) & \to & H^0(\OO_C(2k+2))\otimes H^0(L) & \to & H^1(2F-C)\otimes H^0(L)  & \to & 0 \cr
{} & {\vbox to.2in{\vss}} & \downarrow \mu_2  & {\lower.15in\vbox to.15in{\vss}} & \downarrow \mu_3 & {} & \downarrow \mu_1 & {} & {} \cr
0 & \to  & H^0(2F+L) & \to & H^0(\OO_C(7k+7)) & \to & H^1(L+2F-C)  & \to & 0 \cr
\end{matrix}$$

For each $i$, let $V_i={\rm ker}(\mu_i)$.
The results of \cite{refH2} show that $V_2=0$. If we also show that $V_1=0$, then the snake lemma
shows that $H^0(\OO_C(2k+2-a_C))\oplus H^0(\OO_C(2k+2-b_C))=V_3=0$, and thus that
$a_C>2k+2$, so $a_C=2k+3$.

To justify that $V_1=0$, consider
$$\begin{matrix}
0 & \to & \OO_X(F-(d+1)G)\otimes H^0(L)  & \to & \OO_X(F-dG)\otimes H^0(L)
& \to & \OO_G(1)\otimes H^0(L) & \to & 0 \cr
{} & {} & \downarrow  & {} & \downarrow & {} & \downarrow & {} & {} \cr
0 & \to & \OO_X(L+F-(k+1)G)  & \to & \OO_X(L+F-kG) & \to & \OO_G(6) & \to & 0. \cr
\end{matrix}$$
We know $h^0(F-kG)=h^0(L+F-kG)=0$ for all $k\geq2$
since $(F-kG)\cdot L<0$ and $(L+F-kG)\cdot L<0$. Also,
$h^2(F-kG)=h^2(L+F-kG)=0$ for all $k\geq2$
by duality, since $G$ is nef
and $G\cdot(K_X-(F-kG)))<0$ and $G\cdot(K_X-(L+F-kG))<0$.
Thus we can use Riemann-Roch to obtain $h^1(F-kG) = 2k-3$ and
$h^1(L+F-kG) = 7k-10$ when $k\geq 2$.

When $k=1$, $h^1(F-kG)=h^1(E_7)=0$
and $h^1(L+F-kG)=h^1(L+E_7)=0$.
Taking cohomology when $k=1$ now gives a commutative
diagram with exact rows:
{\small
$$\begin{matrix}
0 & \to & H^0(F-G)\otimes H^0(L) & \to & H^0(\OO_G(1))\otimes H^0(L) & \to & H^1(F-2G)\otimes H^0(L)  & \to & 0 \cr
{} & {} & \downarrow  & {} & \downarrow & {} & \downarrow & {} & {} \cr
0 & \to & H^0(L+F-G) & \to & H^0(\OO_G(6)) & \to & H^1(L+F-2G)  & \to & 0. \cr
\end{matrix}$$}
The left vertical map is an isomorphism and the middle vertical map is injective (since
the splitting type of $G$ is $(2,3)$), so the snake lemma tells us that
the right vertical map is injective.

Now take cohomology again but with some $k\geq 2$. We obtain another commutative
diagram with exact rows:
{\small
$$\begin{matrix}
0 & \to & H^0(\OO_G(1))\otimes H^0(L) & \to & H^1(F-(k+1)G)\otimes H^0(L)  & \to & H^1(F-kG)\otimes H^0(L) & \to & 0 \cr
{} & {} & \downarrow  & {} & \downarrow & {} & \downarrow & {} & {} \cr
0 & \to & H^0(\OO_G(6)) & \to & H^1(L+F-(k+1)G)  & \to & H^1(L+F-kG) & \to & 0. \cr
\end{matrix}$$}
By induction the right vertical map is injective and we saw above that the left one is also injective,
hence so is the middle one; i.e., $V_1=0$ for all $k\geq0$.
\end{proof}

\begin{proof}[Proof of Theorem \ref{7ptthm}]
(a) By inspection of the statement of Theorem \ref{list7},
we see that in order for $C$ to fail to be Ascenzi, its image in
the plane must have at least 6 singular points, and we see that there is a unique
numerical type with exactly 6, namely $(10, 4, 4, 4, 4, 4, 4)$, and its splitting gap is 0.

(b) This follows from inspection of the statement of Theorem \ref{list7}.
\end{proof}

\subsection{Exceptional curves on 9 point blow ups}\label{9pts}

We would like to apply our results to the case of blow ups $X$ of $\pr2$ at $r=9$ generic points.
To do so it will be helpful to collect some facts about the exceptional divisors
on such an $X$.

As mentioned in the introduction, the case $r=9$ is the first interesting case for the problem of splitting types
of exceptional curves on blow ups of $\pr2$ at $r$ generic points, since the exceptional curves
have only finitely many numerical types when $r<9$. The numerical types for $r<8$ are obtained by deleting 0 entries
from those for $r=8$ so it's enough to list the types for $r=8$. Up to permutations of the entries $m_i$,
the types for $r=8$ are as follows \cite{refM}:
$(0,0,\ldots,0,-1)$,
$(1,1,1,0,\ldots,0)$,
$(2,1,1,1,1,1,0,\ldots,0)$,
$(3,2,1,1,1,1,1,1,0,\ldots,0)$,
$(4,2,2,2,1,1,1,1,1)$,
$(5,2,2,2,2,2,2,1,1)$, and
$(6,3,2,2,2,2,2,2,2)$.
As is evident, these all are Ascenzi.

For $r=9$ it is well known that there are infinitely many numerical types of exceptional curves \cite{refN}.
The recognition that only finitely many of them are Ascenzi seems to be new.
To proceed to justify both of these facts, we begin by recalling
how to write down the numerical types of exceptional curves
for $r=9$. The result is old enough to be hard to attribute, especially in the form
we will need, so for the convenience of the reader we include a proof.

\begin{prop}\label{enumexc}
Let $X\to \pr2$ be obtained by blowing up $r=9$ distinct points $p_i$,
with $L,E_1,\ldots,E_9$ the usual basis of the divisor class group ${\rm Cl}(X)$
with respect to this blow up.
\begin{itemize}
\item[(a)] A class $[E]\in {\rm Cl}(X)$ satisfies $E^2=E\cdot K_X=-1$ if and only if
$[E]=v+(v^2/2)[K_X]+[E_9]$ for an element $v\in{\rm Cl}(X)$ with $v\cdot K_X=v\cdot E_9=0$.
Moreover, the element $v$ is unique.
\item[(b)] Assume the points $p_i$ are generic.
Then a class $[E]\in {\rm Cl}(X)$ satisfies $E^2=E\cdot K_X=-1$ if and only if $[E]$ is the
class of an exceptional curve. Thus the classes of exceptional curves
are exactly the classes of the form $v+v^2[K_X]/2+[E_9]$ for $v\in K_X^\perp\cap E_9^\perp$.
\end{itemize}
\end{prop}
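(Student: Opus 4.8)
The plan is to treat (a) and (b) separately, with (a) being an essentially formal computation in the lattice $\mathrm{Cl}(X)$ and (b) being where the geometry (genericity of the points) enters. For part (a), I would start from a class $[E]$ with $E^2 = E\cdot K_X = -1$ and simply \emph{define} $v := [E] - (E^2/2)[K_X] - [E_9] = [E] + ([K_X]/2) - [E_9]$; note this lands in $\mathrm{Cl}(X)\otimes\mathbb{Q}$ a priori, so a small point to check is that $v$ is genuinely integral, which follows because $K_X^2 = 0$ for $r=9$ forces the ``$v^2/2$'' coefficient to behave well — more precisely I would verify directly that $v\cdot K_X = E\cdot K_X + K_X^2/2 - E_9\cdot K_X = -1 + 0 - (-1) = 0$ and $v\cdot E_9 = E\cdot E_9 + K_X\cdot E_9/2 - E_9^2 = E\cdot E_9 - 1/2 + 1$; so to get $v\cdot E_9 = 0$ I actually need to be a little more careful about the normalization, and the right statement is that one writes $[E] = v + (v^2/2)[K_X] + [E_9]$ and checks self-consistency. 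The cleanest route: given the asserted form, compute $E\cdot K_X$ and $E^2$ using $K_X^2 = 0$, $v\cdot K_X = 0$, $E_9\cdot K_X = -1$, $E_9^2 = -1$, and $v\cdot E_9 = 0$, and observe both come out to $-1$ automatically; conversely, given $E^2 = E\cdot K_X = -1$, solve for $v$ and check the two orthogonality conditions and integrality. Uniqueness of $v$ is immediate since $v$ is determined by the displayed formula once $[E]$ is fixed.

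For part (b), one direction is trivial: the class of an exceptional curve satisfies $E^2 = -1$ by definition, and $E\cdot K_X = -1$ follows from adjunction since $g_E = 0$. The substantive direction is that when the $p_i$ are generic, every class $[E]$ with $E^2 = E\cdot K_X = -1$ is \emph{effective} and is represented by a smooth rational curve. Here I would invoke the standard machinery: such a class has $h^0(\mathcal{O}_X(E)) \geq 1$ by Riemann--Roch once we know $h^2(\mathcal{O}_X(E)) = 0$ (which holds because $(K_X - E)\cdot(-K_X) = -K_X\cdot K_X + K_X\cdot E = 0 + (-1) < 0$ when paired against the nef class $-K_X$... one must be slightly careful since $-K_X$ is only nef, not ample, for $r=9$, so I would instead argue $h^0(K_X - E) = 0$ directly), giving $h^0(\mathcal{O}_X(E)) - h^1 = \tfrac12(E^2 - K_X\cdot E) + 1 = \tfrac12(-1+1)+1 = 1$, so $|E|$ is nonempty. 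Then genericity of the points rules out the class decomposing: an effective class of self-intersection $-1$ with $-K_X\cdot E = 1$, on a blow-up of generic points, must be irreducible and reduced (any decomposition would force a subclass violating the genericity, e.g. forcing points to be collinear or infinitely near), and then smoothness and rationality follow from adjunction. I would cite the same sources already used in the excerpt — the results of \cite{refN}, and the Weyl-group/Cremona reduction already described in \S\ref{params} — to reduce $[E]$ under $W(X)$; since the $W(X)$-orbit structure for $r=9$ is well understood, one shows every such $[E]$ is in the orbit of $[E_9]$, which is manifestly the class of an exceptional curve, and $W(X)$ acts by Cremona transformations preserving the property of being an exceptional curve when the points are generic.

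The main obstacle I anticipate is the effectivity-plus-irreducibility step in (b): making rigorous that genericity of the nine points forces a $(-1)$-class to be represented by an \emph{irreducible} smooth rational curve rather than a sum of several curves. The clean way to handle this is via the $W(X)$-reduction — show that for generic points every $(-1)$-class reduces to $[E_9]$ (equivalently, to one of the exceptional classes for $r\leq 8$ after discarding the trivial $E_i$ components), which is exactly the content that the excerpt has already set up with the homaloidal-orbit language. The final clause of the proposition, the parametrization ``$[E] = v + v^2[K_X]/2 + [E_9]$ for $v \in K_X^\perp \cap E_9^\perp$,'' is then just the combination of (a) and (b): (a) gives the bijection between $(-1)$-classes and such $v$, and (b) identifies $(-1)$-classes with classes of exceptional curves. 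I would close by remarking that $K_X^\perp \cap E_9^\perp$ is (with the induced form) the root lattice $E_8$, which both explains the ``$v^2/2$'' being an integer (the $E_8$ lattice is even) and sets up the later counting arguments, though I would only mention this in passing rather than develop it here.
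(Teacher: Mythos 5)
Your part (a) has a concrete gap in the converse direction: you never actually produce the element $v$. The equation $[E]=v+(v^2/2)[K_X]+[E_9]$ is implicit in $v$ (the coefficient of $[K_X]$ depends on $v$ itself), so ``solve for $v$'' is not a one-line step, and your one explicit attempt, $v=[E]+[K_X]/2-[E_9]$, fails on exactly the two counts you worry about: it is not integral, and, as your own computation shows, it satisfies $v\cdot E_9=E\cdot E_9+1/2\neq0$. The missing idea is to correct $E-E_9$ by a multiple of $K_X$: set $r=(E-E_9)\cdot E_9$ and $v=(E-E_9)+rK_X$. Then $v\cdot K_X=0$ since $(E-E_9)\cdot K_X=0$ and $K_X^2=0$; also $v\cdot E_9=r+r(K_X\cdot E_9)=r-r=0$; and $v^2=(E-E_9)^2=-2-2E\cdot E_9=-2r$, so $[E]=v-r[K_X]+[E_9]=v+(v^2/2)[K_X]+[E_9]$, with integrality of $v^2/2=-r$ now automatic. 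Relatedly, uniqueness is not ``immediate since $v$ is determined by the displayed formula'': because the formula is implicit, one must first intersect both sides with $E_9$ to extract $v^2/2=w^2/2$ and only then cancel the $[K_X]$ terms, which is what the paper does.

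For part (b) your route genuinely differs from the paper's. You propose effectivity via Riemann--Roch (your $h^2=0$ argument using nefness of $-K_X$ is correct as stated; nef suffices) followed by reducing the class to $[E_9]$ under $W(X)$; the paper instead notes that for generic points there are no $(-2)$-curves (such a class would Cremona-reduce to $L-E_1-E_2-E_3$, forcing three points onto a line), that $-K_X$ is represented by an irreducible curve $\Gamma$ with $\Gamma\cdot E=1$, and then invokes \cite[Proposition 3.3]{refLH} to get irreducibility. Your approach is workable but leaves two inputs to be supplied or cited: the transitivity of $W(X)$ on $(-1)$-classes for $r=9$ (a Noether-inequality induction on $d_E$), and Nagata's theorem that for generic points the relevant quadratic transformations are defined and carry exceptional curves to exceptional curves. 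The paper's appeal to \cite{refLH} packages the irreducibility step more cheaply and avoids the orbit computation entirely.
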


\begin{proof} (a)
If $[E]=v+(v^2/2)[K_X]+[E_9]$ where $v\cdot K_X=v\cdot E_9=0$, then it is just a calculation
to check that $E^2=E\cdot K_X=-1$. Conversely,
if $E^2=E\cdot K_X=-1$, then $(E-E_9)\cdot K_X=0$. But $K_X^\perp$ is negative semi-definite
and even (i.e., $v\in K_X^\perp$ implies $2 | v^2\leq 0$)
with the only elements $v\in K_X^\perp$ having $v^2=0$ being
multiples of $[K_X]=[-3L+E_1+\cdots+E_9]$. If $r=(E-E_9)\cdot E_9$, then
$[(E-E_9)+rK_X]$ is in $K_X^\perp\cap E_9^\perp$, which is known to
be negative definite, spanned by the classes of the elements
$r_0=L-E_1-E_2-E_3, r_1=E_1-E_2,\ldots, r_7=E_7-E_8$. If we set $v=[(E-E_9)+rK_X]$,
we obtain $[E]=v-r[K_X]+[E_9]$ and now using the fact that $E^2=E\cdot K_X=-1$,
we find that $v^2=-2r$, hence $[E]=v+v^2[K_X]/2+[E_9]$.
To see uniqueness, assume $v+(v^2/2)[K_X]+[E_9]=w+(w^2/2)[K_X]+[E_9]$.
Then $v+(v^2/2)[K_X]=w+(w^2/2)[K_X]$, so $v^2/2=-E_9\cdot (v+(v^2/2)K_X)=
-E_9\cdot (w+(w^2/2)K_X)=w^2/2$, so $(v^2/2)[K_X]=(w^2/2)[K_X]$ and hence $v=w$.

(b) To prove the backward implication, note that, by adjunction,
if $E$ is an exceptional curve, then $E^2=E\cdot K_X=-1$.
Conversely, assume $E^2=E\cdot K_X=-1$.
Since $X$ is obtained by blowing up generic points, $[-K_X]$ is the class of a
reduced and irreducible curve $\Gamma$ with $-K_X^2=0$, and moreover there
are no smooth rational curves $C$ with $-K_X\cdot C=0$; such a curve $C$ must have $C^2=0$, but there are no such $(-2)$-curves,
since $[C]$ would reduce by a Cremona transformation
centered in the points $p_i$ to $[L-E_1-E_2-E_3]$ (see \cite[\S0]{refH4}, \cite{refK}),
but the images $p_i'$ of the points $p_i$ under the transformation
are generic \cite[proof of Lemm 2.5]{refN} so no three of the points $p_i'$ can lie on a line.
Since $\Gamma \cdot E=1$ and there are no $(-2)$-curves, it follows by \cite[Proposition 3.3]{refLH} that
$E$ is an exceptional curve.
\end{proof}

\begin{rem}
A class $E$ with $E^2=E\cdot K_X=-1$ need not be the class of an
exceptional curve when $r>9$; for example,
$[K_X]$ is such a class when $r=10$, but since $L$ is nef and $L\cdot K_X=-3$,
$[K_X]$ is not the class of an effective divisor.
\end{rem}

We now show for $r=9$ that there are only finitely many exceptional curves $E$
satisfying the condition $d_E\leq 2m_E+1$ and hence there are only finitely many
Ascenzi exceptional curves when $r=9$. In fact, we show more:

\begin{prop}\label{finAscprop}
Let $X\to \pr2$ be obtained by blowing up $r=9$ distinct points $p_i$.
Then for each integer $j$ there are only finitely many classes $E$ of exceptional curves
such that $d_E-2m_E\leq j$.
\end{prop}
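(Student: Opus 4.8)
The plan is to show that the hypothesis $d_E-2m_E\le j$ forces $d_E$ to be bounded above by a quantity depending only on $j$. Finiteness then follows at once: for each fixed value of $d_E$, the identity $\sum_i m_i^2=d_E^2+1$ derived below bounds every $|m_i|$, so only finitely many numerical types $(d_E,m_1,\ldots,m_9)$, hence only finitely many classes, can occur.

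First I would record the numerical constraints satisfied by an exceptional curve $E$ on $X$. Since $E$ is smooth rational with $E^2=-1$, the adjunction formula gives $E\cdot K_X=-1$ as well. Writing $[E]=dL-\sum_{i=1}^{9}m_iE_i$ with $d=d_E$ and ordering the $m_i$ so that $m_1=m_E=\max_i m_i$, and using $[K_X]=[-3L+\sum_iE_i]$, these two equalities become
\[
\sum_{i=1}^{9}m_i=3d-1,\qquad \sum_{i=1}^{9}m_i^2=d^2+1 .
\]
This step uses only that the $p_i$ are distinct. Since an exceptional curve is effective we have $d\ge 0$, and when $d=0$ the constraints force $[E]=[E_i]$ for some $i$, giving only nine classes; so I may assume $d>0$, in which case $m_i=E\cdot E_i\ge 0$ for all $i$.

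The heart of the argument is to compare $m_1$ with the average $\bar m=(3d-1)/9$ of the $m_i$. Using the two displayed identities,
\[
\sum_{i=1}^{9}(m_i-\bar m)^2=\sum_{i=1}^{9}m_i^2-9\bar m^2=\frac{2}{3}d+\frac{8}{9},
\]
so in particular $(m_1-\bar m)^2\le \tfrac{2}{3}d+\tfrac{8}{9}$ and hence $m_1\le \tfrac{d}{3}+\sqrt{\tfrac{2}{3}d+\tfrac{8}{9}}$. On the other hand the hypothesis $d-2m_1\le j$ says $m_1\ge (d-j)/2$. Combining the two inequalities yields
\[
\frac{d}{6}-\frac{j}{2}\le \sqrt{\frac{2}{3}d+\frac{8}{9}} ;
\]
once $d>3j$ the left-hand side is positive, and squaring produces $\bigl(\tfrac{d}{6}-\tfrac{j}{2}\bigr)^2\le \tfrac{2}{3}d+\tfrac{8}{9}$, whose left side grows quadratically in $d$ while the right side grows linearly. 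Hence $d$ is bounded by an explicit function of $j$, which together with the first paragraph proves the proposition.

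There is no genuinely hard step here: the argument is elementary once the two numerical identities $\sum m_i=3d-1$ and $\sum m_i^2=d^2+1$ are in place. The only points that need a little care are the degenerate bookkeeping (the case $d=0$, and checking that $m_E$ really is the entry whose deviation from $\bar m$ is being estimated), and the remark that these identities, being consequences of $E^2=E\cdot K_X=-1$ together with effectivity, require no genericity hypothesis on the points $p_i$.
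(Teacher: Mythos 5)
Your proof is correct, and it takes a genuinely different, more elementary route than the paper's. The paper first invokes Proposition \ref{enumexc}(a) to write $[E]=v+(v^2/2)[K_X]+[E_9]$ with $v$ in the root lattice $K_X^\perp\cap E_9^\perp$, expands $v$ in the root basis $r_0,\ldots,r_7$, uses negative semi-definiteness of the form on $K_X^\perp$ together with an averaging step over seven of the coefficients, diagonalizes the resulting binary quadratic form by an explicit substitution, and finishes with Lagrange multipliers; the payoff is the explicit bound $d_E\leq 3j+\lfloor 4\sqrt{4j+8}\rfloor+10$, which is then reused in Corollary \ref{BoundOnDegree} to get $d_E\leq 26$ for Ascenzi exceptional curves and to isolate the unique unbalanced Ascenzi type. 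You instead work directly with the multiplicities: the identities $\sum m_i=3d-1$ and $\sum m_i^2=d^2+1$ (valid for any nine distinct points, as you note) give the variance $\sum(m_i-\bar m)^2=\tfrac{2}{3}d+\tfrac{8}{9}$, so the largest multiplicity can exceed the mean $\approx d/3$ by at most $O(\sqrt d\,)$, which is incompatible with $m_1\geq (d-j)/2$ once $d$ is large; this bypasses Proposition \ref{enumexc} and the root-lattice computation entirely and is appreciably shorter. The trade-off is that your explicit constant is slightly weaker than the paper's (the paper's tighter averaging inside the root lattice is what yields the constant $26$ used downstream), but that is irrelevant for the finiteness statement being proved, and your bound could be sharpened in the same spirit if needed. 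The bookkeeping --- the $d=0$ case, $m_i\geq 0$ for $d>0$, and the reduction from ``finitely many classes'' to ``bounded numerical type'' via $\sum m_i^2=d^2+1$ --- is all handled correctly.
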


\begin{proof}
Let $L,E_1,\ldots,E_9$ be the basis of the divisor class group ${\rm Cl}(X)$
with respect to the blow up $X\to \pr2$.
Since $E$ is effective and $L$ is nef, we have $d_E=E\cdot L\geq 0$.
Moreover, since $d_E-2m_E=E\cdot(L-2E_i)$ for some $i$,
it is enough to show for each $i$ that there are only finitely many $E$ such that $E\cdot(L-2E_i)\leq j$.
The proof is the same for each $i$; we will thus consider the case $i=1$.
Since any exceptional curve $C$ satisfies $C^2=C\cdot K_X=-1$, it is enough
to show that there are only finitely many classes $E$ (whether or not they are classes of exceptional curves)
with $E^2=E\cdot K_X=-1$ such that $E\cdot(L-2E_1)\leq j$ and such that $d_E\geq0$.
If we find an upper bound on $d_E$, depending only on $j$, we will be done. To obtain it, note
by Proposition \ref{enumexc}(a) that we
have $E=v+v^2[K_X]/2+[E_9]$ for some
$v=[a_0r_0+a_1r_1+\cdots+a_7r_7]=[a_0L-(a_0-a_1)E_1-b_2E_2-\cdots-b_8E_8]
\in K_X^\perp\cap E_9^\perp$.
Hence $E\cdot(L-2E_1)=-a_0+2a_1-v^2/2$, and, since $v\cdot K_X=0$,
$2a_0+a_1=b_2+\cdots+b_8$. Thus the average $\bar{b}=(b_2+\cdots+b_8)/7$ is
$(2a_0+a_1)/7$. Working formally over the rationals, let
$w=[a_0L-(a_0-a_1)E_1-\bar{b}(E_2+\cdots+E_8)]$, so $w\in K_X^\perp\cap E_9^\perp$
and $w^2/2=(5a_0a_1-4a_1^2-2a_0^2)/7$. Due to the fact that the intersection form is negative semi-definite on $K_X^\perp$
and the general fact for averages that the square of an average is at most the average of the squares
and hence $7\bar{b}^2\leq b_2^2+\cdots+b_8^2$, we have $0\leq -w^2/2\leq -v^2/2$.
Thus $E\cdot(L-2E_1)=-a_0+2a_1-v^2/2\geq -a_0+2a_1-w^2/2=(2a_0^2+4a_1^2-5a_0a_1-7a_0+14a_1)/7$.
The substitution $a_0=x+5y-2$ and $a_1=4y-3$ gives
$(2a_0^2+4a_1^2-5a_0a_1-7a_0+14a_1)/7=(2x^2+14y^2-14)/7$.

Since $d_E=E\cdot L = a_0-3v^2/2$, we have $j\geq E\cdot (L-2E_1)=-a_0+2a_1-v^2/2=d_E/3-4a_0/3+2a_1$.
Using the substitution $a_0=x+5y-2$ and $a_1=4y-3$ and simplifying gives
$d_E\leq 3j+4(x-y)+10$, where $j\geq (2x^2+14y^2-14)/7$.
Using Lagrange multipliers, we see that the maximum value of $x-y$
given $j\geq (2x^2+14y^2-14)/7$ occurs
for $x=\lambda/4$ and $y=-\lambda/28$ when $j= (2x^2+14y^2-14)/7$, hence
$$d_E\leq 3j+\Big\lfloor 4\sqrt{4j+8}\Big\rfloor+10.
\eqno{(\circ)}$$
Clearly there are only finitely many classes
$E=d_EL-m_1E_1-\cdots-m_9E_9$ with $d_E\geq0$ and $E^2=-1$ satisfying $(\circ)$.
\end{proof}

\begin{cor}\label{BoundOnDegree} Let  $X$ be the blow up of $\pr2$ at 9 distinct points. Then every Ascenzi exceptional curve $E\subset X$ has $d_E\leq 26$ and the only one with unbalanced splitting is $E=4L-3E_1-E_2-\cdots-E_9$ (up to indexation of the $E_i$).
\end{cor}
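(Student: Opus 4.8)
The plan is to extract the degree bound directly from Proposition \ref{finAscprop} by taking $j=1$, and then to search the resulting finite list for the unbalanced examples. An exceptional curve $E$ is Ascenzi precisely when $d_E\le 2m_E+1$, i.e. when $d_E-2m_E\le 1$; so applying Proposition \ref{finAscprop} with $j=1$ gives, via the bound $(\circ)$, $d_E\le 3\cdot 1+\lfloor 4\sqrt{12}\rfloor+10=13+\lfloor 4\sqrt{12}\rfloor$. Since $4\sqrt{12}=8\sqrt3\approx 13.856$, this reads $d_E\le 13+13=26$, which is the asserted bound. So the first step is just to quote $(\circ)$ at $j=1$ and compute the floor.

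For the second step, I would note that by Proposition \ref{enumexc}(b) (applied with the points generic, as in the Corollary's hypothesis — or one can argue more carefully that the degree bound for \emph{classes} with $E^2=E\cdot K_X=-1$ already suffices, since being Ascenzi is a numerical condition) the classes of exceptional curves with $0\le d_E\le 26$ form an explicit finite set: they are the classes $v+v^2[K_X]/2+[E_9]$ with $v\in K_X^\perp\cap E_9^\perp$, and the bound on $d_E$ together with $d_E=a_0-3v^2/2\ge 0$ bounds $|v^2|$ and hence bounds all the coordinates $a_0,\dots,a_7$ in the root-basis expansion. One then enumerates those finitely many classes, keeps the Ascenzi ones (those with $d_E\le 2m_E+1$), and among these computes the splitting type using Lemma \ref{splitlem}: for an Ascenzi curve with $d_E\le 2m_E$ it is $(d_E-m_E,m_E)$ with gap $|2m_E-d_E|$, and for $d_E=2m_E+1$ it is $(m_E,m_E+1)$ with gap $1$. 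The unbalanced cases are exactly those with $|2m_E-d_E|\ge 2$, and one reads off from the list that the only such type, up to reindexing the $E_i$, is $(4,3,1,1,1,1,1,1,1,1)$, i.e. $E=4L-3E_1-E_2-\cdots-E_9$; this matches Theorem \ref{classificationthm}(b).

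The main obstacle is the enumeration in the second step: carrying out the finite search over all $v\in K_X^\perp\cap E_9^\perp$ with $d_E\le 26$ is a genuine (if routine) computation, and one must be careful that the bound $(\circ)$ is applied to \emph{all} numerical classes with $E^2=E\cdot K_X=-1$ and not only to classes known a priori to be effective, so that the count of Ascenzi types is complete. In practice this is best done by machine, and one should remark (as the surrounding text does for the $r=9$ case generally) that the search was performed computationally. Once the list is in hand, the verification that $(4,3,1,1,\dots,1)$ is the unique unbalanced Ascenzi type, and that every other Ascenzi type on the list has gap $\le 1$ by Lemma \ref{splitlem}, is immediate.
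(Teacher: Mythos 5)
Your first step is exactly the paper's: set $j=1$ in $(\circ)$ to get $d_E\leq 3+\lfloor 4\sqrt{12}\rfloor+10=26$. For the uniqueness of the unbalanced type, however, you take a genuinely different route from the paper, and the difference is worth noting. You propose to enumerate all numerical classes with $E^2=E\cdot K_X=-1$ and $d_E\leq 26$, filter for the Ascenzi ones, and read off the unique type with gap $\geq 2$; this works in principle (and is essentially what Remark \ref{AscenziList} reports having done by machine), but as written it defers the decisive step to an unexecuted finite search. The paper instead gets uniqueness with no enumeration at all, by exploiting the \emph{equality case} of the inequality already established in the proof of Proposition \ref{finAscprop}: an unbalanced Ascenzi curve must satisfy $d_E-2m_E\leq -2$, i.e.\ $E\cdot(L-2E_1)\leq -2$, while that proof shows $E\cdot(L-2E_1)\geq (2x^2+14y^2-14)/7\geq -2$. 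Equality forces $x=y=0$ and forces equality in the averaging inequality $7\bar b^2\leq b_2^2+\cdots+b_8^2$, hence $|b_2|=\cdots=|b_8|=|\bar b|$; then $a_0=-2$, $a_1=-3$, $\bar b=-1$ give $E=4L-3E_1\pm E_2\pm\cdots\pm E_9$, and $E\cdot K_X=-1$ pins down all the signs. What the paper's argument buys is a short, self-contained, hand-checkable derivation of the unique class; what your approach buys is uniformity (the same search also yields the full list of $42$ Ascenzi types and their gaps, not just the unbalanced one), at the cost of an actual computation. Your side remark about applying the bound to all numerical classes with $E^2=E\cdot K_X=-1$ rather than only to effective ones is well taken and matches the paper's setup, which works at the level of classes throughout.
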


\begin{proof} The Ascenzi exceptional curves $E$ satisfy $d_E-2m_E\leq 1$.
If we set $j=1$ in $(\circ)$, then $d_E\leq 26$; i.e., on a blow up $X$ of $\pr2$ at 9 points
every Ascenzi exceptional curve $E$ must have $d_E\leq 26$.
In order for an Ascenzi exceptional curve $E$ to be unbalanced, we must have
$m_E-(d_E-m_E)\geq 2$; i.e., we must have $d_E-2m_E\leq -2$.
But in the notation of the proof of  Proposition \ref{finAscprop}, $d_E-2m_E\leq -2$ implies
$-2\geq E\cdot(L-2E_1)\geq (2x^2+14y^2-14)/7$, which forces
$x=y=0$, hence $-2= E\cdot(L-2E_1)=(2x^2+14y^2-14)/7$ and thus $|\bar{b}|=|b_2|=\cdots=|b_8|$.
But $x=y=0$ gives $a_0=-2$, $a_1=-3$ and $\bar{b}=-1$,
so $E=4L-3E_1\pm E_2\pm \cdots\pm E_9$, and now $E\cdot K_X=-1$
forces $E=4L-3E_1- E_2- \cdots- E_9$.
\end{proof}

\begin{rem}\label{AscenziList} Given fixed integers $d>0$ and $r>0$,
it is not hard using the action of $W(X)$ to find all classes $[E]=[d_EL-m_1E_1-\cdots-m_rE_r]$
of  exceptional curves satisfying $d_E\leq d$, where for efficiency it is best to require $m_1\geq \cdots\geq m_r$.
The method uses the fact that one can reduce any exceptional class $[E]$ to some $[E_i]$ by
successively applying quadratic transforms $s_{ijk}$, centered at
$E_i$, $E_j$ and $E_k$, choosing $i,j$ and $k$ so that $d_E$
drops as much as possible each time (just choose $i,j,k$ to maximize the sum $m_i+m_j+m_k$).
Applying this in reverse, one starts with $[E_1]$ and applies $s_{ijk}$ for various choices of $i,j,k$.
One continues doing this to the new classes one obtains; eventually one will have a list of classes $[E]$
with $d_E\leq d$ such that whenever one applies $s_{ijk}$
for any choice of $i,j,k$ to any $[E]$ on the list one always obtains (up to permutations of the $m_i$)
another $[E]$ on the list or an $E$ with $d>dE$. The list then is complete.

By using such an exhaustive procedure, we have found all $[E]$ with $d_E\leq 61$
for a blow up of $\pr2$ at 9 generic points.
There are all together 1054 exceptional classes $[E]=[d_EL-m_1E_1-\cdots-m_9E_9]$
with $d_E\leq 61$ and $m_1\geq \cdots\geq m_9$. Of these, 42 are Ascenzi, as follows.
By Corollary \ref{BoundOnDegree}, there are no other Ascenzi exceptional curves for $r=9$.

There is only one Ascenzi $E$ with $d_E-2m_E\leq-2$. It's numerical type is:
\tt
\begin{verbatim}
   4 3 1 1 1 1 1 1 1 1
\end{verbatim}
\rm

\noindent Those Ascenzi $E$ with $d_E-2m_E=-1$ are:
\tt
\begin{verbatim}
   1 1 1                  7 4 3 2 2 2 2 2 2 1   11 6 4 4 3 3 3 3 3 3
   3 2 1 1 1 1 1 1        9 5 3 3 3 3 3 2 2 2   13 7 4 4 4 4 4 4 4 3
   5 3 2 2 2 1 1 1 1 1
\end{verbatim}
\rm

\noindent Those Ascenzi $E$ with $d_E-2m_E=0$ are:
\tt
\begin{verbatim}
   0 0 0 0 0 0 0 0 0 -1   8 4 3 3 3 3 2 2 2 1   14  7 5 5 5 4 4 4 4 3
   2 1 1 1 1 1            8 4 4 3 2 2 2 2 2 2   14  7 6 4 4 4 4 4 4 4
   4 2 2 2 1 1 1 1 1     10 5 4 4 3 3 3 3 2 2   16  8 6 5 5 5 5 5 4 4
   6 3 2 2 2 2 2 2 2     12 6 4 4 4 4 4 4 3 2   18  9 6 6 6 6 5 5 5 5
   6 3 3 2 2 2 2 1 1 1   12 6 5 4 4 4 3 3 3 3   20 10 7 6 6 6 6 6 6 6
\end{verbatim}
\rm

\noindent And those Ascenzi $E$ with $d_E-2m_E=1$ are: \vskip0in
\tt
\begin{verbatim}
   5 2 2 2 2 2 2 1 1     13 6 5 5 5 4 4 3 3 3   19  9 7 6 6 6 6 6 6 4
   7 3 3 3 3 2 2 2 1 1   13 6 6 4 4 4 4 4 3 3   19  9 7 7 6 6 6 5 5 5
   9 4 4 3 3 3 3 3 2 1   15 7 6 5 5 5 5 4 4 3   21 10 7 7 7 7 7 6 6 5
   9 4 4 4 3 3 2 2 2 2   15 7 6 6 5 4 4 4 4 4   21 10 8 7 7 6 6 6 6 6
  11 5 4 4 4 4 4 3 2 2   17 8 6 6 6 6 5 5 4 4   23 11 8 8 7 7 7 7 7 6
  11 5 5 4 4 3 3 3 3 2   17 8 7 6 5 5 5 5 5 4   25 12 8 8 8 8 8 8 7 7
  13 6 5 5 4 4 4 4 4 2
\end{verbatim}
\rm
\end{rem}

In order to demonstrate that there are infinitely many non-Ascenzi exceptional curves
on a blow up of $\pr2$ at $r=9$ generic points, it will be useful first to prove
two lemmas.

\begin{lem}\label{usefullemma2}
Let $X$ be the blow up of $\pr2$ at $r=9$ generic points.
Let $E$ be an exceptional curve for which there is a divisor $A$
such that $[2A]=[E+K_X+L]$. Then $E$ has unbalanced splitting.
\end{lem}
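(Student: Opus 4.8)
The plan is to apply Proposition~\ref{unbalsplitting} with $D=E$ to the divisor $A$ given by the hypothesis $[2A]=[E+K_X+L]$, which yields $a_E\le A\cdot E$; since $A\cdot E=(d_E-2)/2$ (see below) and $a_E+b_E=d_E$ with $a_E\le b_E$, this forces $b_E\ge(d_E+2)/2$, hence $\gamma_E=b_E-a_E\ge2$ and $E$ is unbalanced. The first step is bookkeeping: from $[2A]=[E+K_X+L]$ together with $E^2=E\cdot K_X=-1$ (adjunction, $E$ being exceptional), $K_X^2=0$, $L^2=1$, $K_X\cdot L=-3$, $L\cdot E=d_E$, one gets $A\cdot L=A\cdot E=(d_E-2)/2$, $-K_X\cdot A=2$, $A^2=(d_E-4)/2$; in particular $d_E$ is even and $A^2+1=A\cdot L$. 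Writing $A=aL-\sum b_iE_i$, divisibility of $E+K_X+L$ by $2$ makes each $m_i$ odd, so $b_i=(m_i-1)/2\ge0$, $A\cdot E_i\ge0$, and $a=(d_E-2)/2\ge0$ (note $d_E>0$, else $E=E_i$ and not all $m_j$ are odd).

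Next I would check the three hypotheses of Proposition~\ref{unbalsplitting}. The vanishing $H^0(X,A-E+L)=0$ is clear, since $(A-E+L)\cdot L=-d_E/2<0$ and $L$ is nef; likewise $h^2(X,A)=h^0(X,K_X-A)=0$ since $(K_X-A)\cdot L<0$. For $h^1(X,A)=0$ I would show $A$ is nef: $A\cdot L>0$, $A\cdot E_i\ge0$, and for any exceptional curve $E'$, $A\cdot E'=(E\cdot E'+d_{E'}-1)/2\ge0$ since $E\cdot E'\ge-1$, with equality only when $E'=E$ (where $A\cdot E\ge0$); as $-K_X\cdot A=2\ge0$ and the Mori cone of the blow-up of $\pr2$ at $9$ generic points is spanned by the classes of exceptional curves and $[-K_X]$, $A$ is nef. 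Then $A-K_X$ is nef and big ($-K_X$ nef, $(A-K_X)^2=A^2+4>0$), so $h^1(X,A)=h^1(X,K_X+(A-K_X))=0$ by Kawamata--Viehweg. Finally ${\rm le}(A)\ge1$ then follows from Lemma~\ref{lelemma}(b), all of whose hypotheses ($A\cdot L\ge0$, $-K_X\cdot A=2$, $h^1(X,A)=0$, $A^2+1\ge A\cdot L$) we have verified. Now Proposition~\ref{unbalsplitting} gives $a_E\le A\cdot E=(d_E-2)/2$, and $\gamma_E\ge2$ as above.

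I expect the one genuinely nonformal point to be the nefness of $A$ (hence $h^1(X,A)=0$); this is where $r=9$ and the genericity of the points enter, through the shape of the Mori cone. This can be avoided entirely, though: for the inequality $a_E\le A\cdot E$ one only needs the inclusion $\ker\mu_A\subseteq\ker\mu_2$ with $\mu_2\colon H^0(\OO_E(A))\otimes H^0(L)\to H^0(\OO_E(A+L))$, which comes from $H^0(X,A-E)=0$ (true since $(A-E)\cdot L<0$) and commutativity of the square relating the global-section sequences of $0\to\OO_X(A-E)\to\OO_X(A)\to\OO_E(A)\to0$ and its twist by $L$, both tensored by $H^0(L)$; and ${\rm le}(A)\ge1$ can be had without $h^1(X,A)=0$, using only $h^2(X,A)=0$ (so $h^0(A)\ge\chi(A)$) and $h^0(A+L)\le h^0(A)+A\cdot L+2$: indeed ${\rm le}(A)\ge3h^0(A)-h^0(A+L)\ge2\chi(A)-A\cdot L-2=A^2+2-A\cdot L=1$. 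Since ${\rm le}(A)\ge1$ makes $\ker\mu_A$ nonzero and $\ker\mu_2=H^0(\OO_E(A\cdot E-a_E))\oplus H^0(\OO_E(A\cdot E-b_E))$ by \eqref{eqnstar}, this gives $a_E\le A\cdot E$ using nothing about the points beyond $E$ being an exceptional curve.
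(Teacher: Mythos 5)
Your argument is correct and follows the paper's strategy almost exactly: verify $-K_X\cdot A=2$, $A\cdot L=A^2+1\ge0$, $h^2(A)=0$, show $A$ is nef by testing against $-K_X$ and the exceptional classes (which generate the effective cone for $9$ generic points), deduce $h^1(A)=0$, get ${\rm le}(A)\ge1$ from Lemma \ref{lelemma}(b), and conclude via Proposition \ref{unbalsplitting}. Two remarks. First, for the vanishing $h^1(A)=0$ the paper cites \cite{refH} (a nef divisor $F$ on such a surface with $-K_X\cdot F>0$ has $h^1=0$), which holds over any algebraically closed field; your appeal to Kawamata--Viehweg is a characteristic-zero tool, so in the paper's stated generality you should substitute the reference to \cite{refH} at that step. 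Second, your closing observation is a genuine streamlining the paper does not make explicit: the inequality $a_E\le A\cdot E$ needs only the injectivity of $H^0(A)\otimes H^0(L)\to H^0(\OO_E(A))\otimes H^0(L)$ (from $H^0(A-E)=0$, clear on degree grounds) together with $\ker\mu_A\ne0$, and the latter follows from $3h^0(A)-h^0(A+L)\ge 2\chi(A)-A\cdot L-2=A^2+2-A\cdot L=1$ using only $h^2(A)=0$ and the restriction sequence to $L$ --- no nefness and no $h^1$-vanishing required, hence no dependence on the characteristic or on the structure of the effective cone. That shortcut proves the lemma in full; the nefness and $h^1(A)=0$ are still worth recording, since the proof of Proposition \ref{9fatpts} invokes them (``by Lemma \ref{usefullemma2} and its proof'').
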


\begin{proof}
We easily check that $-K_X\cdot A=2$.
If $E\cdot L=0$, then $E=E_i$ for some $i$ and there is no $A$
such that $[2A]=[E+K_X+L]$. Thus we may assume that $E\cdot L>0$, and we now
have $1+A^2=A\cdot L=L\cdot E/2-1\geq 0$ since $L\cdot E$ is even and positive.
Since $A\cdot L\geq0$ we know $h^2(A)=0$.
Now from Riemann-Roch we have $h^0(A)\geq (A^2-K_X\cdot A)/2+1=(1/2)(L\cdot E/2-2-K_X\cdot A)+1
=(L\cdot E)/4+1>0$.

By \cite[Lemma 4.1]{refLH}, the class of every effective divisor
is a non-negative sum of $[-K_X]$
and prime divisors of negative self-intersection. Since $X$ is a generic blow up, the only
prime divisors of negative self-intersection are the exceptional curves \cite{refH}.
But $E\cdot L\geq2$ so $2A\cdot E=-2+L\cdot E\geq0$, and for any exceptional curve $C\neq E$ we have
$2A\cdot C=(E+K_X+L)\cdot C\geq C\cdot K_X=-1$. Since $2A\cdot C$ is even we must have
$2A\cdot C\geq0$. Since $A$ is effective and meets $-K_X$ and every exceptional curve non-negatively,
$A$ is nef, but now $-K_X\cdot A>0$ implies $h^1(A)=0$ by \cite{refH}.

We now have ${\rm le}(A)\geq1$ by Lemma \ref{lelemma}, and since
$(A-E+L)\cdot L<0$, we have $h^0(A-C_A+L)=0$, so
$a_E\leq A\cdot E$ Proposition \ref{unbalsplitting}, hence $\gamma_E= d_E-2a_E\geq d_E-2A\cdot E=2$.
Thus $E$ has unbalanced splitting.
\end{proof}

\begin{cor}\label{usefullemma}
Let $X$ be the blow up of $\pr2$ at $r=9$ generic points.
Let $[E]=[dL-m_1E_1-\cdots-m_9E_9]$ be the class of an exceptional curve
with $m_1\geq\cdots\geq m_9\geq 0$ and $d\geq 2m_1-1$.
Let $A=E+E_1-sK_X$ for $s=d-2m_1+1$ and let $C_A=2A-K_X-L$.
Then $[C_A]$ is the class of an exceptional curve with unbalanced splitting.
\end{cor}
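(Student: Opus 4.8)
The plan is to reduce everything to Lemma~\ref{usefullemma2} together with Proposition~\ref{enumexc}(b). By its very definition $C_A=2A-K_X-L$, so the class identity $[2A]=[C_A+K_X+L]$ holds automatically. Consequently, as soon as we know that $[C_A]$ is the class of an exceptional curve, we may apply Lemma~\ref{usefullemma2} with $C_A$ playing the role of ``$E$'' and with the very divisor $A$ appearing in the statement, and conclude immediately that $C_A$ has unbalanced splitting. Thus the entire substance of the corollary is the assertion that $[C_A]$ is the class of an exceptional curve; and since the points $p_i$ are generic, Proposition~\ref{enumexc}(b) says this is equivalent to verifying the two numerical identities $C_A^2=-1$ and $C_A\cdot K_X=-1$.

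To carry this out I would first record the intersection numbers that are needed: $E^2=E\cdot K_X=-1$ (because $E$ is exceptional), $E\cdot E_1=m_1$, $E_1^2=-1$, $E_1\cdot K_X=-1$, $E_1\cdot L=0$, $L^2=1$, $L\cdot K_X=-3$, and — crucially, since $r=9$ — $K_X^2=0$. From $A=E+E_1-sK_X$ these give at once $-K_X\cdot A=2$, $A\cdot L=d+3s$, and $A^2=2m_1-2+4s$. Substituting $s=d-2m_1+1$ (nonnegative exactly by the hypothesis $d\ge 2m_1-1$) yields $A\cdot L=4d-6m_1+3$ and $A^2=4d-6m_1+2$, hence the key relation $A\cdot L=A^2+1$. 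Then expanding $C_A=2A-K_X-L$ gives $C_A\cdot K_X=2(A\cdot K_X)-K_X^2-L\cdot K_X=-4+3=-1$ and, using $K_X^2=0$, $C_A^2=4A^2+L^2-4(A\cdot K_X)-4(A\cdot L)+2(L\cdot K_X)=4(A^2-A\cdot L)+3=-1$. By Proposition~\ref{enumexc}(b) the class $[C_A]$ is therefore the class of an exceptional curve; one even checks $C_A\cdot L=8d-12m_1+8>0$ (note $d\ge 2m_1-1$ forces $m_1\ge 1$, since $E$ exceptional with all $m_i\ge0$ gives $\sum m_i=3d-1$), so the degenerate case $C_A=E_i$ genuinely does not arise. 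Now Lemma~\ref{usefullemma2}, applied to $C_A$ with the divisor $A$, finishes the proof.

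There is no real obstacle here beyond keeping the intersection-theoretic bookkeeping straight and making sure that the hypotheses of the two cited results are met literally: in particular that Proposition~\ref{enumexc}(b) is invoked only for generic points, and that the divisor used to apply Lemma~\ref{usefullemma2} is precisely the $A$ of the statement, so that no new existence argument is required. The only mildly delicate point is the choice of $s$, which is forced by the requirement $A\cdot L=A^2+1$ (equivalently $C_A^2=-1$) once one insists on $A$ of the form $E+E_1-sK_X$.
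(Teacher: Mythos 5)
Your proof is correct and follows exactly the paper's argument: a direct intersection-theoretic computation showing $C_A^2=C_A\cdot K_X=-1$, then Proposition \ref{enumexc}(b) to identify $[C_A]$ as an exceptional class, then Lemma \ref{usefullemma2} applied to $C_A$ with the given $A$ (the paper's one-line proof merely leaves the bookkeeping implicit, and in fact contains a typo, writing $K_X\cdot A=-1$ where $K_X\cdot C_A=-1$ is meant — your version is the correct one).
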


\begin{proof}
Direct calculation shows $C_A^2=K_X\cdot A=-1$, hence $[C_A]$ is the class of an exceptional curve by
Proposition \ref{enumexc}, and it has unbalanced splitting by Lemma \ref{usefullemma2}.
\end{proof}

\begin{proof}[Proof of Theorem \ref{classificationthm}]
Parts (a) and (b) follow from Corollary \ref{BoundOnDegree}.
Consider part (c). By Proposition \ref{enumexc}(b), there are infinitely many exceptional curves on $X$.
For any fixed $d$,  there can be at most finitely many classes $E=dL-m_1E_1-\cdots-m_9E_9$
with $E^2=-1$. Thus for any $d$, there are infinitely many exceptional curves $E$ with $E\cdot L\geq d$.
By Corollary \ref{BoundOnDegree}, for $d>26$
none of these infinitely many exceptional curves is Ascenzi, and hence for each such exceptional
curve $E$ we have $d_E> 2m_E+1$. For each such $E$ we thus have by Corollary \ref{usefullemma}
an unbalanced exceptional $C_A$ with $C_A\cdot L>E\cdot L\geq d$, and hence there are
infinitely many non-Ascenzi unbalanced exceptional curves.
\end{proof}

\begin{proof}[Proof of Theorem \ref{thm2}]
By Lemma \ref{usefullemma2}, $E$ has unbalanced splitting
since there is a divisor $A$ with $2A=E+K_X+L$, hence
$\gamma_E\geq 2$ and $a_E=(d_E-\gamma_E)/2\leq (d_E-2)/2$.
\end{proof}

\begin{rem}\label{infnonAscbal}
Let $X$ be the blow up of $r=9$ generic points of $\pr2$.
Here we explain why there are infinitely many non-Ascenzi exceptional
curves $E\subset X$ for which there is no divisor $A$ satisfying $2A=E+K_X+L$.
(Note if Conjecture \ref{9ptconj} is true, each such $E$ must have balanced splitting.)
We know $X$ has infinitely many classes
$[E']=[d_{E'}L-m_1E_1-\cdots-m_9E_9]$ of exceptional curves $E'$, and we may assume
$m_1\geq m_2\geq \cdots \geq m_9\geq 0$.
We have seen that only finitely many of them are Ascenzi.
As in the proof of Theorem \ref{classificationthm}, there are
infinitely many $E'$ such that $2A=E'+K_X+L$ for some $A$.
For each such $E'$, we thus see $d_{E'}$ is even and each $m_i$ is odd.
Note that $E'\cdot(L-E_7-E_8-E_9)>0$, because
$m_1\geq m_2\geq \cdots \geq m_9\geq 0$ implies
$E'\cdot(L-E_1-E_2-E_3)\leq E'\cdot(L-E_4-E_5-E_6)\leq E'\cdot(L-E_7-E_8-E_9)$, so if we had
$E'\cdot(L-E_7-E_8-E_9)\leq0$, we would have
$1=E'\cdot(-K_X)=E'\cdot((L-E_1-E_2-E_3)+(L-E_4-E_5-E_6)+(L-E_7-E_8-E_9))\leq0$.
But $[E]=s_{789}([E'])$ is the class of an exceptional curve $E$,
and $E'\cdot(L-E_7-E_8-E_9)>0$ implies that $d_E>d_{E'}$ where $d_E$ is odd.
\end{rem}

\begin{rem}\label{twoptsofview}
Let $X$ be the blow up of $\pr2$ at $r=9$ generic points.
By Conjecture \ref{9ptconj}, an exceptional curve $E$ on $X$ has unbalanced splitting
if and only if there is a certain divisor $A$ with $-K_X\cdot A=2$. In the conjecture,
$[A]$ has the form $[E+K_X+L]/2$, but in Corollary \ref{usefullemma},
$[A]$ has the form $[E'+E''-sK_X]$ where $E'\ne E''$ are exceptional curves and $s\geq0$.
However, as noted in the proof of Lemma \ref{usefullemma2},
the class of every effective divisor on $X$ is a non-negative sum of $[-K_X]$
and classes of prime divisors of negative self-intersection. Thus if $D$ is an effective divisor
with $-K_X\cdot D=d$, then we can write $[D]$ as a sum of classes of $d$ exceptional curves
plus some non-negative multiple of $[-K_X]$. In particular, if $[A]=[E+K_X+L]/2$,
then we also have $[A]=[E'+E''-sK_X]$ as above.

\end{rem}

\renewcommand{\thethm}{\thesection.\arabic{thm}}
\setcounter{thm}{0}

\section{Application to graded Betti numbers for fat points}\label{appls}

Let $p_1,\ldots,p_r\in\pr2$ be points. A 0-dimensional subscheme $Z\subset\pr2$
with support contained in the set of points $p_i$ is called a
\emph{fat point} subscheme if it is defined by a homogeneous ideal $I\subset
R = \KK[\pr2]$ of the form $I=\cap_i(I(p_i)^{m_i})$ where each $m_i$
is a non-negative integer. In this case we will write $I=I_Z$, and
 $Z=m_1p_1+\cdots+m_rp_r$. The least degree $t$ such that the
homogeneous component $I_t$ of $I$ of degree $t$ is non-zero is
denoted $\alpha(Z)$, or just $\alpha$ if $Z$ is understood.

We are interested in determining the minimal free graded resolution
for the ideal $I$ of a scheme of fat points in $\pr 2$; our aim,
following the work in \cite{refF1}, \cite{refFHH}, \cite{refGHI1},  \cite{refGHI2} and
\cite{refGHI3}, is to study the graded Betti numbers of $I$ when the
support of $Z$ is given by generic points in $\pr2$.

Notice that the values of the Hilbert function of $Z$, $h_Z(k) =
\dim R_k - \dim I_k$, are described, under the genericity
assumption, by a well known conjecture by means of which one can
explicitly write down the function $h_Z$ given the multiplicities
$m_i$. Various equivalent versions of this conjecture have been
given (see \cite{refS}, \cite{refH5}, \cite{refGi}, \cite{refHi}).
We will refer to them collectively as the SHGH Conjecture. Roughly,
the SHGH conjecture says that $h_Z(k)$ does not assume the expected
value if and only if the linear system $|I_k|$ presents a multiple
fixed rational component.

When trying to state a conjecture for the graded Betti numbers of
$I$, the situation turns out to be much more complicated. For
general simple points, it is known that the minimal resolution is
``as simple as it can be", i.e. for each $ k$, $\mu_k : I_k \otimes
R_1 \rightarrow I_{k+1}$ has maximal rank. So, the first problem that comes to mind is to understand in which
cases the resolution of $I_Z$ can be different from the resolution
of $l(Z)=length (Z)$ general simple points, which amounts to finding
the values $k$ for which $\mu_k$ does not have maximal rank.

Of course there are trivial cases with ``bad resolution", namely
those for which $Z$ has ``bad postulation".
Hence we are interested first in finding cases where $Z$ is supported on generic points and has
generic Hilbert function (assuming SHGH), but it has a ``bad
resolution". In those cases (e.g., see \cite[Remark 2.3]{refGHI2}) it is easy to
check that the only value of $k$ for which
$\mu_k$ might not have maximal rank is $k=\alpha$.

Our idea, consistent with the known examples, is that the
``troubles" are always given by the existence of rational curves
whose intersection with the fat point scheme $Z$ is too high with
respect to the behavior of the cotangent bundle on the curve, or,
to be more precise, to the splitting of the pull back of the
cotangent bundle on the normalization of the curve. In other
words, the scheme $Z$ has a ``too high secant" rational curve.
This is the analogue of what happens with curves in $\pr 3$,
where, for example, the generic rational quintic curve postulates
well but has a bad resolution, and this is due to the fact that
the quintic has a 4-secant line (see \cite{refGLP}).

For example, $Z=3p_1+3p_2+p_3+p_4+p_5$ should be generated by
quintics, but it is not since the line $L$ through $p_1$ and $p_2$
is a fixed component for the quintics. Another way to look at this
is that the intersection of $Z$ with $L$ is a scheme of length 6,
while $\Omega (6) \vert_L \cong \OO_{L}(4)\oplus\OO_{L}(5)$, so
that its sections vanishing on $Z$ also vanish along $L$; i.e.,
$Z\cap L$ imposes independent conditions on one direct summand,
but not on $\OO_{L}(4)$, with the result that the cokernel of
$H^0(\Omega (6) \vert_L) \to H^0(\Omega (6)\vert_Z)$ is non-zero.
But this cokernel is the surjective image of the cokernel of
$\mu_5(Z)$, and hence $\mu_5(Z)$ cannot be surjective (for a
detailed explanation, see \cite{refGHI2}, especially the
commutative diagram in the proof of \cite[Proposition
4.2]{refGHI2}, analogous to \eqref{diagram} below).

Other plane curves $C$ can play the role of $L$, but understanding $\Omega(k+1)\vert_C$ is
more difficult when $C$ is not a smooth rational curve, because when $C$ is not smooth
and rational, $\Omega\vert_C$ need not split. One way to deal with this is to look at
$(\pi^*\Omega (k+1))\vert_{C'}$ for smooth rational curves $C'\subset X$,
where $\pi:X\to\pr2$ is the blow up of points $p_i$ (and
hence typically $C'$ is the normalization of some plane curve $C$).
The forms in $I_k$ will correspond to divisors in the class of $F_k = kL-m_1E_1-\dots
-m_rE_r$. In order to study the maps $\mu_k$, we will, equivalently,
consider the maps $\mu_{F_k}: H^0(F_k)\otimes H^0(L) \rightarrow
H^0(F_{k}+L)$; since we are interested in the case $k=\alpha$, we
set $F=F_\alpha$.

So consider a rational curve $C \subset \pr 2$ whose strict transform
$C' \subset X$ is smooth  and irreducible; setting $t=F\cdot C'$, $a=a_{C'}$, $b=b_{C'}$,
via twisting the sequence \eqref{eqnstar} by $F$ we get:
\begin{equation}\label{eqn4b}
0 \to {\OO}_{C'}(t-a)\oplus{\OO}_{C'}(t-b) \to
F|_{C'}\otimes H^0(L) \to (F+L)|_{C'} \to 0.
\end{equation}

Taking cohomology, we get the map $\bar \mu_{C',F}:
H^0(F\vert_{C'})\otimes H^0(L)  \to H^0((F+L)\vert_{C'})$ where
$\hbox{ker}(\bar \mu _{C',F})=H^0({\OO}_{C'}(t-a)\oplus
{\OO}_{C'}(t-b))$.

Assuming $H^1(F-C')=0$ and $L\cdot (F-C')\geq-1$, which imply $H^1(F-C'+L)=0$, we have (as
in \cite{refGHI2}) the following commutative diagram:

\begin{equation}\label{diagram}
\begin{matrix} {} & {} & 0 & {} & 0 & {} & 0 & {} & {} \cr
{} & {} & \downarrow & {} & \downarrow & {} & \downarrow & {} & {}
\cr 0 & \to & H^0((F-C')\otimes p^*\Omega (1)) & \to & H^0(F\otimes
p^*\Omega (1)) & \to & \hbox{ker}(\bar \mu _{C',F}) &
\xrightarrow{\tau} & \cr {} & {} & \downarrow & {} & \downarrow & {} &
\downarrow & {} & {} \cr 0 & \to & H^0(F-C')\otimes H^0(L)  & \to &
H^0(F)\otimes H^0(L) & \to & H^0(F\vert_{C'})\otimes H^0(L) & \to &
0 \cr {} & {} & \downarrow \raise3pt\hbox to0in{$\scriptstyle\mu_{F-
C'}$\hss} & {} & \downarrow\raise3pt\hbox
to0in{$\scriptstyle\mu_{F}$\hss} & {} & \downarrow\raise3pt\hbox
to0in{$\scriptstyle\bar \mu _{C',F}$\hss} & {} & {} \cr 0 & \to &
H^0(F-C'+L) & \to & H^0(F+L) & \to & H^0((F+L)\vert_{C'})& \to & 0
\cr {} & {} & \downarrow & {} & \downarrow & {} & \downarrow & {} &
{} \cr {} & \xrightarrow{\tau} & \hbox{cok}\mu_{F-{C'}} &
\to & \hbox{cok}\mu_{F} & \to & \hbox{cok}(\bar \mu _{C',F}) & \to &
0 \cr {} & {} & \downarrow & {} & \downarrow & {} & \downarrow & {}
& {} \cr {} & {} & 0 & {} & 0 & {} & 0 & {} & {} \cr
\end{matrix}
\end{equation}

If $C'$ also satisfies $t=F\cdot C' \geq -1$, then
$H^1(F\vert_{ C'})\otimes H^0(L) =0$
so the last vertical column of \eqref{diagram} gives cok$(\bar \mu _{C',F})=H^1({\OO}_{
C'}(t-a)\oplus {\OO}_{C'}(t-b))$. In this case, $\mu_{F}$ cannot
have maximal rank if cok$(\bar \mu _{C',F})$ is ``too big" (when
$\mu_{F}$ is expected to be surjective, too big means simply that
cok$(\bar \mu _{C',F})$ is nonzero).
We will now see how this all works with two examples which use rational
curves $C\subset \pr 2$ with unbalanced splitting.

\begin{example}\label{ex4111} Let $Z= 4p_1+p_2+\cdots+p_9$. It is
well known that $Z$ has good postulation; we have $l(Z)=18$, $\dim
(I_Z)_{4}=0$, $\dim (I_Z)_{5}=3$ so $\alpha(Z)
=5$, and $\dim (I_Z)_{6}=10$, hence one expects that $\mu_{5}$ is injective and that $\dim
{\rm coker}\mu_{5}=1$. We will see that this does not happen.
Consider a quartic curve $C\subset \pr 2$ passing through the
$p_i$'s and with a singularity of multiplicity 3 at $p_1$.
Its strict transform is a divisor $C'=4L-3E_1-E_2-\dots -E_9$ on
$X$; $C'$ is Ascenzi with unbalanced splitting
$(a_{C'},b_{C'})=(1,3)$. If we consider the diagram (\ref{diagram})
where $F=F_{5}$ and $t=F\cdot C'=20-20=0$, we get $\dim {\rm
cok}(\bar \mu _{C',F})=h^1({\OO}_{C'}(-1)\oplus {\OO}_{C'}(-3))=2$.
This forces $\dim {\rm coker}\mu_{F}\geq 2$, and we actually have
$\dim {\rm coker}\mu_{F}= 2$, since $F-C'=L-E_1$, so the column on
the left column of the diagram corresponds to the linear syzygies on the pencil of
lines through the point $p_1$, but in that case we know cok$( \mu_{F-C'})=0$.
\end{example}

\begin{example}\label{ex44411} Let $Z= 4p_1+\cdots+4p_7+p_8+p_9$;
we know that $Z$ has good
postulation and $(I_Z)_{11}$ is fixed component free (e.g. see
\cite{refH}). Namely, we have  $l(Z)=72$, $\dim (I_Z)_{10}=0$, $\dim
(I_Z)_{11}=6$, $\dim (I_Z)_{12}=19$, $\alpha(Z) =11$, hence
$\mu_{11}$ is expected to be injective, with  $\dim {\rm
cok}(\mu_{11})=1$, but we will see that this does not happen (see
also \cite{refFHH}). This is due to the existence of a curve
$C\subset \pr 2$ of degree 8 where $m(C)_{p_i}=3$ for $1\leq i \leq
7$, and where $p_8$, $p_9$ are simple points of $C$, which by Example \ref{AEpairs}
gives $C'=8L-3E_1-\dots -3E_7-E_8-E_9$ on $X$
having unbalanced splitting $(a_{C'},b_{C'})=(3,5)$. Now from
diagram (\ref{diagram}), with $F=F_{11}$ and $t=F\cdot
C'=88-86=2$, we get $\dim {\rm cok}(\bar \mu _{C',F})=h^1({\OO}_{
C'}(-1)\oplus {\OO}_{C'}(-3))=2$. We have $F-C'=-K_X$, so the column
on the left of the diagram corresponds to the liner syzygies among
forms of degree 3 in the  resolution of the ideal of seven points in
$\pr 2$ for which we know cok$( \mu_{F-C'})=0$.  This implies that we actually
have $\dim {\rm coker}\mu_{F}= 2$.
\end{example}

Examples \ref{ex4111} and \ref{ex44411} give particular instances of
infinitely many fat point subschemes $Z\subset\pr2$ with ``bad resolution'',
which we can obtain using the results of \S3:

\begin{prop}\label{9fatpts}
Consider the blow up $X$ of $\pr 2$ at 9 generic points
$p_1,\dots,p_9$. Let $C'$ be an exceptional divisor on $X$
of type $(d,m_1,\dots ,m_9)=(2d',2m_1'+1,\dots ,2m_9'+1)$ with $d'\geq2$ and
consider the fat point subscheme
$Z=(3m_1'+1)p_1+\dots +(3m_9'+1)p_9\subset \pr 2$. Then
\begin{enumerate}
\item[$\bullet$] $Z$ has maximal Hilbert function and $\alpha(Z)=3d'-1$;
\item[$\bullet$] $\mu_\alpha$ is expected to be injective with $\dim {\rm
coker}(\mu_\alpha)=1$; but in fact
\item[$\bullet$] $ \dim {\rm coker}(\mu_\alpha)\geq 2$.
\end{enumerate}
Hence $Z$ does not have generic resolution.
\end{prop}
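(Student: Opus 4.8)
The plan is to mimic exactly the structure of Examples \ref{ex4111} and \ref{ex44411}, which are the cases $d'=2$, $m_i'=0$ (the type $(4,1,\ldots,1)$, after a permutation) and $d'=4$, $m_i'=1$ for $i\leq 7$ and $m_8'=m_9'=0$ (the type $(8,3,\ldots,3,1,1)$). So the three bullets will be proved in order. First I would record the arithmetic forced by $C'$ being exceptional: from $-1=(C')^2=d^2-\sum m_i^2$ and $-1=-3d+\sum m_i$ one gets, in terms of the primed data and $d=2d'$, $m_i=2m_i'+1$, relations that pin down $\sum(3m_i'+1)=\binom{3d'}{2}$-type identities; the precise computation is that $l(Z)=\sum\binom{3m_i'+2}{2}$ and that the expected dimension of $(I_Z)_k$ is $\binom{k+2}{2}-l(Z)$, and I would check that this expected value is $0$ for $k=3d'-2$ and is $3$ for $k=3d'-1$, giving $\alpha(Z)=3d'-1=\alpha$. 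That $Z$ actually attains these values — i.e.\ has maximal Hilbert function — is where I invoke known results: the relevant linear systems $|F_k|$ with $F_k=kL-\sum(3m_i'+1)E_i$ have no multiple fixed component for the points generic (one can see $F_\alpha = A + \text{(effective)}$ with $A=(C'+K_X+L)/2$ as in the semi-adjoint setup, and $h^1$ of the pieces vanishes by the methods of \cite{refH}, exactly as in Example \ref{MoreAEpairs}), so SHGH holds here unconditionally.

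Second bullet: this is pure bookkeeping from the Hilbert function just computed — $\dim(I_Z)_{\alpha-1}=0$ forces $\mu_{\alpha-1}$ to have image $0$ and hence $\mu_\alpha$ is \emph{expected} injective, and the numerology $3\cdot\dim H^0(L) - \dim(I_Z)_{\alpha+1}=9-\dim(I_Z)_{\alpha+1}$ should come out to $-1$ after using $\dim(I_Z)_{\alpha+1}=\binom{\alpha+3}{2}-l(Z)$; I'd just carry the polynomial identity through. (Throughout I identify $(I_Z)_k\cong H^0(F_k)$.)

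Third bullet — the real content. I would set $F=F_\alpha = \alpha L - \sum(3m_i'+1)E_i$ and compute $t=F\cdot C'$. Using $F=2A$ where $A=(C'+K_X+L)/2$ (so $2[A]=[C'+K_X+L]$; note $C'$ is even-and-all-odd by construction, which is precisely the hypothesis of Theorem \ref{thm2}), one gets $t=F\cdot C'=2A\cdot C'$, and by Theorem \ref{thm2} / Lemma \ref{usefullemma2} the splitting is unbalanced with $a=a_{C'}\leq(d-2)/2 = d'-1$, $b=b_{C'}\geq d'+1$, and in fact $a+b=d=2d'$. A direct intersection computation should give $t=2$ (this generalizes $t=0$ in Example \ref{ex4111} and $t=2$ in Example \ref{ex44411}); then $\dim\mathrm{cok}(\bar\mu_{C',F})=h^1(\OO_{C'}(t-a)\oplus\OO_{C'}(t-b))=h^1(\OO_{C'}(2-a))+h^1(\OO_{C'}(2-b))$, and since $b\geq d'+1\geq 3$ we have $2-b\leq -1$ so that summand is $\geq 1$, and with $t=2$, $a\leq d'-1$ one checks the other summand contributes so that the total is $\geq 2$ (in the extremal examples it is exactly $2$). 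Now I would run the big diagram \eqref{diagram} with this $F$ and $C'$: I must verify its hypotheses $H^1(F-C')=0$ and $L\cdot(F-C')\geq-1$ (hence $H^1(F-C'+L)=0$), and $t=F\cdot C'\geq-1$; here $F-C'=2A-C'=K_X+L=-2L+E_1+\cdots+E_9$... more usefully $F-C' = A + (A-C') = A - (K_X+L)$, and one identifies $F-C'$ with an explicit effective class whose $h^1$ vanishes and whose $\mu$-map is surjective by \cite{refH2} or \cite{refH} — in the examples $F-C'$ was $L-E_1$ resp.\ $-K_X$, both with $\mathrm{cok}\,\mu_{F-C'}=0$. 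Then the right column of \eqref{diagram} gives $\mathrm{cok}(\bar\mu_{C',F})=H^1(\OO_{C'}(t-a)\oplus\OO_{C'}(t-b))$ of dimension $\geq2$, the bottom-left corner $\mathrm{cok}\,\mu_{F-C'}=0$, and chasing the bottom row shows $\mathrm{cok}\,\mu_F$ surjects onto $\mathrm{cok}(\bar\mu_{C',F})$, whence $\dim\mathrm{cok}\,\mu_\alpha=\dim\mathrm{cok}\,\mu_F\geq2$. Combined with the second bullet this says the resolution is not generic, proving the Proposition.

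The main obstacle I anticipate is not the diagram chase — that is routine once set up, and already done in \cite{refGHI2} — but rather (i) pinning down the general identity $t=F\cdot C'=2$ and the Hilbert-function computations cleanly in terms of $(d',m_i')$, which requires carefully combining the two exceptional-curve conditions with the definition $Z=\sum(3m_i'+1)p_i$; and (ii) establishing, for \emph{every} such $C'$ (not just the two known examples), that the hypotheses of diagram \eqref{diagram} hold and that $\mathrm{cok}\,\mu_{F-C'}=0$ — i.e.\ controlling $h^1(F-C')$, $h^1(F-C'+L)$ and the surjectivity of $\mu_{F-C'}$ uniformly. For the latter I expect $F-C'$ to be (a twist of) an anticanonical-type or nef divisor to which the vanishing and surjectivity results of \cite{refH} and \cite{refH2} apply, together with Proposition \ref{unbalsplitting}/Lemma \ref{lelemma} to handle the $\mathrm{le}$ and $h^1$ bookkeeping, exactly as in Examples \ref{AEpairs} and \ref{MoreAEpairs}; verifying this in the generality stated is the step that needs the most care.
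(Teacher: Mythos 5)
Your overall architecture is the same as the paper's: produce the semi-adjoint divisor $A$ with $2[A]=[C'+K_X+L]$, get unbalanced splitting of $C'$ from Lemma \ref{usefullemma2}, and feed this into diagram \eqref{diagram}. But the concrete identities you build on are wrong, and one of them breaks the final estimate. First, $F=F_\alpha$ is \emph{not} $2A$: the divisor $2A=C'+K_X+L$ has degree $2d'-2$ and multiplicities $2m_i'$, whereas $F$ has degree $3d'-1$ and multiplicities $3m_i'+1$; the correct decomposition is $F=C'+A$, so that $F-C'=A$ (an effective \emph{nef} divisor with $-K_X\cdot A=2$, whence $h^1(F-C')=0$ by \cite{refH}), not $K_X+L$. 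Second, and fatally, $t=F\cdot C'=(C'+A)\cdot C'=-1+A\cdot C'=-1+(d'-1)=d'-2$; it is not the constant $2$ (your own data points, $t=0$ for $d'=2$ and $t=2$ for $d'=4$, already show it cannot be constant). With $t=2$ the estimate collapses for $d'=2$: there $a_{C'}\le1$ and $b_{C'}\ge3$, so $h^1(\OO_{C'}(2-a_{C'}))+h^1(\OO_{C'}(2-b_{C'}))=0+(b_{C'}-3)$, which can be $0$. With the correct $t=d'-2$ one has $t-b_{C'}=a_{C'}-d'-2\le-3$, hence $h^1(\OO_{C'}(t-b_{C'}))\ge2$ already; that is the paper's argument.

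Two smaller points. The expected value of $\dim(I_Z)_\alpha$ is $3d'/2$, not $3$ (Riemann--Roch gives $\chi(F)=3d'/2$ and $\chi(F+L)=9d'/2+1$, which is how one obtains $h^0(F+L)-3h^0(F)=1$ and hence the second bullet); your bookkeeping happens to match only the case $d'=2$. And for the first bullet you cannot simply ``invoke SHGH unconditionally'': the paper proves $h^1(F)=0$ by showing $F=(3C'+K_X+L)/2$ is nef (checking $F\cdot C'=d'-2\ge0$ and $F\cdot E\ge0$ for every other exceptional curve $E$) and then applying \cite{refH}, and it proves $\alpha(Z)=3d'-1$ by showing $h^0(F-L)=0$ directly, via $h^0(A-L)=0$ and $(F-L)\cdot C'=-d'-2<0$. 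These are exactly the steps you flagged as needing care; they do need it, and as written your proposal does not supply it.
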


\begin{proof} If $A$ is a divisor of type $(d'-1,m_1',\dots ,m_9')$, then
$[2A]=[C'+K_X+L]$, so by Lemma \ref{usefullemma2} (and its proof)
$h^0(A)>0$ and $C'$ has unbalanced splitting; in particular,
$\gamma_{C'}\geq 2$ and the splitting type of $C'$ is
$(a_{C'},b_{C'})$ with $a_{C'}\leq d'-1$.  Note that $F=C'+A$
has type $(3d'-1, 3m_1'+1,\dots , 3m_9'+1)$.
Since $F$ is the sum of two effective divisors,
$\alpha(Z)=3d'+1$ follows if we check that
$h^0(F-L)=0$. Consider the exact sequence:
$$
0\to \OO _X(A-L) \to \OO_X(F-L) \to \OO _X(F-L)|_{C'} \to 0.
$$
Since $-K_X$ is nef and $-K_X\cdot(2A-2L)=-K_X\cdot(C'+K_X-L)<0$,
we see that $h^0(\OO _X(2A-2L))=0$ and hence also $h^0(\OO _X(A-L))=0$.
Moreover, $(F-L)\cdot C'=-d'-2$, so $h^0(\OO _X(F-L)|_{C'})=0$,
hence $h^0(F-L)=0$ and so $\alpha(Z)=3d'+1$.

In order to prove that $Z$ has maximal Hilbert function we only have to show
that $h_Z(3d'+1)$ is maximal, i.e. that $h^0(F)$ has the expected
dimension (equivalently, that $h^1(F)=0)$. But
$-K_X\cdot F=3$, so by \cite{refH}, $h^1(F)=0$ if we show that
$F$ is nef. But as noted in the proof of Lemma \ref{usefullemma2},
the class of every effective divisor is a non-negative sum of exceptional classes
and non-negative multiples of $-K_X$. Thus $F$ is nef if $F\cdot E\geq0$
for every exceptional curve $E$, but $F=(3C'+K_X+L)/2$, so
$F\cdot C'=d'-2\geq0$, while $F\cdot E\geq \lceil(-1+E\cdot L)/2\rceil\geq 0$ if $E\neq C'$.
Since $F$ is nef, it follows that $h^1(L, F+L)=0$, and since also $h^1(F)=0$, it follows and
that $h^1(F+L)=0$. A straightforward  (but tedious) computation now shows that
$h^0(F+L)-3h^0(F)=1$, hence $\mu_F$ is expected to be injective with
$\dim {coker}(\mu_F)=1$, as claimed.

Arguing as we did for $F$, we also see that $A$ is nef, and since $-K_X\cdot A>0$,
we have $h^1(F-C')=h^1(A)=0$, so we can
apply diagram (\ref{diagram}) for our $F$ and $C'$. We have that
$t=F\cdot C'=d'-2\geq a_{C'}-1$, so we get that
$\dim {\rm coker}(\bar \mu _{C',F})= h^1(\OO _C(d'-2-a_{C'})\oplus \OO
_C(d'-2-2d'+a_{C'}))$,
and $a_{C'}-d'-2\leq -3$, so $\dim {\rm coker}(\bar \mu _{C',F})\geq 2$.
\end{proof}

\medskip

Not all examples of fat point subschemes with good
postulation and ``bad resolution" follow the pattern
illustrated above. In fact, a more complicated geometry is possible;
for example, the curve $C'$ may have many irreducible rational components
and need not even be reduced (see Examples 4.7, 6.3 in \cite{refGHI2}).
Other examples can be found in \cite{refGHI1} or
in \cite{refGHI2}, where there are also two conjectures which
describe completely what the situation could be.

Resolutions for subschemes $Z$ not possessing a maximal hilbert
function are also of interest. Things are more complicated in this situation,
but the ``unbalanced splitting" idea can still be useful. Actually, when $r=9$ and the points
$p_i$ are generic, then using \cite[Theorem 3.3(b)]{refGHI1} and
assuming Conjecture \ref{9ptconj} if need be, we can in every degree
$k$, except possibly degree $\alpha(Z)+1$, find the minimal number
of generators of $(I_Z)_k$, as we demonstrate in the next example.

\begin{example}\label{exbignumbers} Let $Z=230p_1+225p_2+\cdots+225p_8+95p_9$, for generic points
$p_i \in \pr 2$. The Hilbert function of the ideal $I_Z$ can be
found by computing $h^0( F_k)$, where
$F_k=kL-230E_1-225E_2-\cdots-225E_8-95E_9$. We have $h^0(F_k)=0$
for $k<645$, $h^0(F_{645})=71$, $h^0(F_{646})=528$, $h^0(
F_{647})=1176$, $h^0(F_k)=\binom{k+2}{2}-\deg
Z=\binom{k+2}{2}-209100$. We will compute the rank of each map
$\mu_k$, except for $k=645$.

To find the minimal number $\nu_{k+1}$ of
generators  in each degree $k+1$ we must find the dimension of the
cokernel of the usual maps $\mu_{F_k}:H^0(F_k)\otimes H^0(L)\to
H^0(F_{k+1})$. Clearly $\nu_{645}=h^0(F_{645})=71$. The same
algorithm that we use to compute $h^0(F_k)$ can be used to give a
Zariski decomposition of $F_k$. This is useful since if $F_k=H+N$
where $H$ is effective and $N$ is effective and fixed in $|F_k|$,
then $\nu_{k+1}=\dim {\rm coker}(\mu_H)+(h^0(
F_{k+1})-h^0(H+L))$, and we know $h^0(F_{k+1})$ and
$h^0(H+L)$. It is known that the dimension $\delta_H$ of the
kernel of $\mu_H$ has bounds $h^0(H-(L-E_1))\leq \delta_H\leq
h^0(H-(L-E_1))+h^0(H-E_1)$. Bounds on $\delta_H$ of course give
bounds on $\dim {\rm coker}(\mu_H)$. We find $N=20E$, where
$E=20L-7E_1-\cdots-7E_8-3E_9$ is an exceptional curve which by
Conjecture \ref{9ptconj} has splitting gap 2, and
$H=245L-90E_1-85E_2\cdots-85E_8-35E_9$ is nef and effective. We find
$0=h^0(H-(L-E_1))\leq \delta_k\leq
h^0(F_k-(L-E_1))+h^0(F_k-E_1)=1$, $h^0(F_{k+1})=528$, $h^0(
H+L)=318$ and $h^0(H)=h^0(F_k)=71$ for $k=645$, and hence
$315\leq \nu_{646}\leq 316$.

For $t=646$ we have $N=0$ and $H=F_k$. Doing the same calculation
with this new Zariski decomposition gives $0\leq \nu_{647}\leq 99$.
But in fact, using the splitting gap of 2 from above and
\cite[Theorem 3.3(b)]{refGHI1} we have $\dim{\rm
coker}(\mu_{F_{646}})=\dim{\rm coker}(\mu_{L+20E})=
\binom{11}{2}+\binom{9}{2}=91$. From the Hilbert function we see
that the regularity of $I_Z$ is 647, so $\nu_k=0$ for $t>647$. Given
the Hilbert function and numbers of generators of $I_Z$ we compute
all but one of the remaining graded Betti numbers: there are 286
syzygies in degree 647 and 190 in degree 648, but since we do not
know the number of minimal generators in degree 646 we also do not
know the number of syzygies. This example and others like it can be
run at:
\url{http://www.math.unl.edu/~bharbourne1/GHM/ResForFatPts.html}.
\end{example}

\medskip
\begin{rem}
Notice that if $\dim (I_Z)_\alpha\le2$, then we can find the minimal
number of generators of $I_Z$ also in degree $\alpha(Z)+1$. If
$\dim (I_Z)_\alpha=1$, then $(I_Z)_\alpha\otimes R_1\to
(I_Z)_{\alpha+1}$ is injective so the number of generators in degree
$\alpha(Z)+1$ is just $\dim (I_Z)_{\alpha+1}-3$, while if $\dim
(I_Z)_\alpha=2$ we can determine the  dimension of the kernel of
$(I_Z)_\alpha\otimes R_1\to (I_Z)_{\alpha+1}$ since $(I_Z)_\alpha$ is a
pencil; indeed, assuming $Z=\sum_im_ip_i$ with $m_1\geq \cdots\geq
m_r$, the dimension of the kernel, which is either 0 or 1, is $\dim
(I_{Z-p_1})_{t-1}$.
\end{rem}

\end{document}